\newcounter{filler}
\numberwithin{equation}{section}
\newcommand{\aform}{\ensuremath{\langle\text{~,~}\rangle}\xspace}
\newcommand{\Fil}{\ensuremath{\mathrm{Fil}}\xspace}
\newcommand{\loc}{\ensuremath{\mathrm{loc}}\xspace}
\newcommand{\naive}{\ensuremath{\mathrm{naive}}\xspace}
\newcommand{\Rt}{\ensuremath{\mathrm{t}}\xspace}
\newcommand{\sform}{\ensuremath{(\text{~,~})}\xspace}
\newcommand{\spin}{\ensuremath{\mathrm{spin}}\xspace}
\newcommand{\undertilde}{\raisebox{0.4ex}{\smash[t]{$\scriptstyle\sim$}}}
\DeclareMathOperator{\spann}{span}
\DeclareMathOperator{\WT}{WT}
\renewcommand{\L}{\ensuremath{\mathscr{L}}\xspace}
\begin{document}
   
\renewcommand{\O}{\ensuremath{\mathscr{O}}\xspace}

\title[On the moduli description of ramified unitary local models]{On the moduli description of local models for ramified unitary groups}
\author{Brian Smithling}
\address{Johns Hopkins University, Department of Mathematics, 3400 N.\ Charles St.,\ Baltimore, MD  21218, USA}
\email{bds@math.jhu.edu}
\subjclass[2010]{Primary 14G35; Secondary 11G18, 14G10}
\thanks{\emph{Key words and phrases.}  Shimura variety; local model; unitary group}

\begin{abstract}
Local models are schemes which are intended to model the \'etale-local structure of $p$-adic integral models of Shimura varieties.  Pappas and Zhu have recently given a general group-theoretic construction of flat local models with parahoric level structure for any tamely ramified group, but it remains an interesting problem to characterize the local models, when possible, in terms of an explicit moduli problem. In the setting of local models 
for ramified, quasi-split $GU_n$, work towards an explicit moduli description was initiated in the general framework of Rapoport and Zink's book and was subsequently advanced by Pappas and Pappas--Rapoport.  
In this paper we propose a further refinement to their moduli problem, which we show is both necessary and sufficient to characterize the (flat) local model in a certain special maximal parahoric case with signature $(n-1,1)$.
\end{abstract}

\maketitle

\section{Introduction}\label{s:intro}

Local models are certain projective schemes defined over a discrete valuation ring \O.  When \O is the completion of the ring of integers of the reflex field of a Shimura variety at a prime ideal, and one has a model of the Shimura variety over \O, the local model is supposed to govern the \'etale-local structure of the Shimura model.  This allows one to reduce questions of a local nature, such as flatness or Cohen--Macaulayness, to the local model, which in practice should be easier to study than the Shimura model itself.  See \cite{prs13} for an overview of many aspects of the subject.


In \cite{pappaszhu13}, Pappas and Zhu recently gave a uniform group-theoretic construction of ``local models'' for tamely ramified groups and showed that these schemes satisfy many good properties.  They showed that their construction gives \'etale-local models of integral models of Shimura varieties in most (tame) PEL cases where the level subgroup at the residual characteristic $p$ of \O is a parahoric subgroup which can be described as the stabilizer of a lattice chain.%
\footnote{We also mention forthcoming work of Kisin and Pappas \cite{kisinpappas?}
extending this result from PEL cases to cases of abelian type.}  
In this setting, Rapoport and Zink \cite{rapzink96} had previously defined natural integral Shimura models and local models in terms of explicit moduli problems based on the moduli problem of abelian varieties describing the Shimura variety, but the resulting schemes are not always flat, as was first observed by Pappas \cite{pappas00}.  In the cases where Pappas and Zhu showed that their construction gives local models of Shimura varieties, they did so by showing that it coincides with the flat closure of the generic fiber in the Rapoport--Zink local model.  When the Rapoport--Zink local model is not already flat, it remains an interesting problem to obtain a moduli description of this flat closure: the Pappas--Zhu schemes are themselves defined via flat closure of the generic fiber, which does not impart a ready moduli interpretation.


When the group defining the Shimura variety splits over an unramified extension of $\QQ_p$ and only involves types $A$ and $C$, G\"ortz showed that the Rapoport--Zink local model is flat in \citelist{\cite{goertz01}\cite{goertz03}}.  
By contrast, the objects of study in the present paper are
local models attached to a \emph{ramified}, quasi-split unitary group, which were Pappas's original examples in \cite{pappas00} 
showing that the Rapoport--Zink local model can fail to be flat.


Let $F/F_0$ be a ramified quadratic extension of discretely valued non-Archimedean fields with common residue field of characteristic not $2$; let us note that the residual characteristic $2$ case is fundamentally more difficult, and we do not omit it merely for simplicity.  Let $n\geq 2$ be an integer, and let
\[
   m := \lfloor n/2 \rfloor.
\]
Let $r + s = n$ be a partition of $n$; the pair $(r,s)$ is called the signature.  Let $I \subset \{0,\dotsc,m\}$ be a nonempty subset with the property that
\begin{equation}\label{disp:I_cond}
	n \text{ is even and } m-1 \in I \implies m \in I.
\end{equation}
Such subsets $I$ index the conjugacy classes of parahoric subgroups in quasi-split $GU_n(F/F_0)$; see \cite{paprap09}*{\s1.2.3}.  Attached to these data is the Rapoport--Zink local model $M^\naive_I$,%
\footnote{To be clear, $M_I^\naive$ depends on the signature $(r,s)$ as well as $I$, but we suppress the former in the notation.}
which has come to be called the ``naive'' local model since it is not flat in general.  See \s\ref{ss:naiveLM} for its explicit definition.  It is a projective scheme over $\Spec \O_E$, where the (local) reflex field $E := F$ if $r \neq s$ and $E:= F_0$ if $r = s$.  When $F$ is the $\QQ_p$-localization of an imaginary quadratic field $K$ in which $p$ ramifies, $M_I^\naive$ is a local model of a model over $\Spec\O_E$ of a $GU_n(K/\QQ)$-Shimura variety, as is explained for example in \cite{paprap09}*{\s1.5.4}.  See \s\ref{ss:Sh} for an example where we spell out such an integral Shimura model explicitly.

Let $M_I^\loc$ denote the (honest) local model, defined as the scheme-theoretic closure of the generic fiber in $M_I^\naive$.  As a first 
step towards a moduli characterization of $M_I^\loc$, Pappas proposed a new condition in \cite{pappas00} to add to the moduli problem defining $M_I^\naive$, called the \emph{wedge condition}; see \s\ref{ss:wedge_spin_conds}.  Denote by $M_I^\wedge$ the closed subscheme of $M_I^\naive$ cut out by the wedge condition.  In the maximal parahoric case $I = \{0\}$ (which is moreover a special maximal parahoric case when $n$ is odd), Pappas conjectured that $M_{\{0\}}^\wedge = M_{\{0\}}^\loc$, and he proved this conjecture in the case of signature $(n-1,1)$.  We will prove his conjecture in general in \cite{sm-decon}.

But for other $I$ the wedge condition is not enough.  The next advance came in \cite{paprap09} with Pappas and Rapoport's introduction of the \emph{spin condition}; see \s\ref{ss:wedge_spin_conds}.  Denote by $M_I^\spin$ the closed subscheme of $M_I^\wedge$ cut out by the spin condition.  Pappas and Rapoport conjectured that $M_I^\spin = M_I^\loc$, and it was shown in \cites{sm11d,sm14} that this equality at least holds on the level of topological spaces.  The starting point of the present paper is that the full equality of these schemes does \emph{not} hold in general.

\begin{ceg}\label{ceg}
For odd $n \geq 5$ and signature $(n-1,1)$, $M_{\{m\}}^\spin$ is not flat over $\Spec \O_E$.
\end{ceg}

See \s\ref{ss:failure}.  
We remark that 
the level structure in the counterexample is of special maximal parahoric type.

In response to the counterexample, in this paper we introduce a further refinement to the moduli problem defining $M_I^\naive$.  This defines a scheme $M_I$ which fits into a diagram of closed immersions
\[
   M_I^\loc \subset M_I \subset M_I^\spin \subset M_I^\wedge \subset M_I^\naive
\]
which are all equalities 
in the generic fiber; see \s\ref{ss:new_cond}.
In its formulation, our new condition is a close analog of the Pappas--Rapoport spin condition.  In its mathematical content, it gives a common refinement of the spin condition and the Kottwitz condition.  We conjecture that it solves the problem of characterizing $M_I^\loc$.

\begin{conj}\label{st:conj}
For any signature and nonempty $I$ satisfying \eqref{disp:I_cond}, $M_I$ is flat over $\Spec \O_E$, or in other words $M_I = M_I^\loc$.
\end{conj}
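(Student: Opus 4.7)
The plan is to verify the conjecture by establishing the two inclusions $M_I^\loc \subset M_I$ and $M_I \subset M_I^\loc$ inside $M_I^\naive$ separately, with the crux being that the new condition is designed to be precisely strong enough to cut away the non-flat part of $M_I^\spin$. The inclusion $M_I^\loc \subset M_I$ should be essentially formal: by the construction in \S\ref{ss:new_cond}, the new condition is a closed condition on $M_I^\naive$ whose restriction to the generic fiber is automatic (the generic fiber is an honest Grassmannian defining $M_I^\loc$ generically, on which the condition holds trivially, as for the wedge and spin conditions). Since $M_I^\loc$ is by definition the scheme-theoretic closure of the generic fiber in $M_I^\naive$, and $M_I$ is a closed subscheme of $M_I^\naive$ containing this generic fiber, one gets $M_I^\loc \subset M_I$ at once.

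The real content is the reverse inclusion. From the chain $M_I^\loc \subset M_I \subset M_I^\spin$ together with the topological equality $|M_I^\loc| = |M_I^\spin|$ proved in \cites{sm11d,sm14}, the three schemes agree as topological subspaces of $M_I^\naive$. The conjecture thereby reduces to the purely scheme-theoretic assertion that $M_I$ is reduced; equivalently, that the nilpotents present in $M_I^\spin$ at the bad points of the special fiber are exactly those eliminated by the new condition.

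My approach to reducedness would proceed in three steps. First, analyze the $T$-fixed points of $M_I^\naive$ in the special fiber: these are indexed by a combinatorial subset of the extended affine Weyl group containing $\Adm_I(\mu)$ for $\mu = (1^s,0^r)$, and a $T$-fixed point lies in $|M_I^\loc|$ if and only if its label is $\mu$-admissible. Show by direct calculation that the new condition selects exactly $\Adm_I(\mu)$ among $T$-fixed points, matching the known combinatorial description of the special fiber of $M_I^\loc$. Second, pass from $T$-fixed points to a reducedness statement by computing the completed local rings of $M_I$ at each fixed point: the new condition should be expressible as an explicit system of polynomial relations refining the spin equations, and one verifies that the resulting quotient of the naive local ring is reduced. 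This is the computation already carried out in the paper in the special maximal parahoric case of signature $(n-1,1)$, so the goal is to find a uniform version. Third, propagate reducedness from the $T$-fixed points to all closed points either via a Białynicki--Birula-type contraction argument or via local-model-diagram/convolution arguments relating general $I$ to the maximal parahoric cases handled in step two.

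The principal obstacle is step two, and specifically the explicit ring-theoretic analysis at the worst fixed points (those labeled by elements of $\Adm_I(\mu)$ lying in the smallest Schubert cells). The counterexample of \S\ref{ss:failure} shows that without the new condition the local ring of $M_I^\spin$ acquires embedded components at such points; writing the new condition in a form sufficiently explicit to check that it kills those embedded components, while remaining a genuine moduli-theoretic refinement, is the core difficulty. For general signature $(r,s)$ the combinatorics of the relevant Plücker-type relations grows rapidly in $n$, and I expect that a uniform proof will not proceed by direct computation inside $M_I^\naive$ but rather by matching $M_I$ with the Pappas--Zhu local model inside an affine Grassmannian for a ramified unitary group, and then transferring their intrinsic flatness back through the comparison. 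Constructing such a comparison map compatible with the moduli interpretation is likely to be the hardest single step.
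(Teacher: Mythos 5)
You are attempting to prove exactly what the paper leaves open as Conjecture~\ref{st:conj}; the paper establishes only the single special case of Theorem~\ref{st:main_thm} ($n$ odd, $I=\{m\}$, signature $(n-1,1)$), and does so by an explicit computation of an $\O_F$-basis for $W(\Lambda_m)_{-1}^{n-1,1}$ feeding into the worst-point chart (Propositions~\ref{st:W(Lambda_m)_coeff_conds} and \ref{st:X_1=0}). Your first two moves are correct and align with the paper's framework: the inclusion $M_I^\loc \subset M_I$ is formal from condition \eqref{it:new_cond} holding on the generic fiber, and the topological identity $|M_I^\loc| = |M_I^\spin|$ from \cites{sm11d,sm14} reduces the conjecture to showing $M_I$ is reduced (using that $M_I^\loc$, being the Pappas--Zhu scheme, is normal and in particular reduced).

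From there, however, the proposal is an outline with acknowledged holes, not an argument. Your step one (matching $T$-fixed points with $\Adm_I(\mu)$) is redundant given the topological equality you already invoked. Step two is precisely the heart of the paper's proof in the special case, and you concede you do not know how to carry it out in general: you supply neither the explicit relations cut out by \eqref{it:new_cond} at a worst point for arbitrary $(r,s)$ and $I$, nor a reason why the resulting local ring is reduced, and there are in general several fixed points, not one, so the worst-point chart alone will not suffice. Step three (a Bia\l{}ynicki--Birula contraction or a convolution/local-model-diagram reduction to the maximal parahoric cases) is unsubstantiated and in any case would still require the maximal parahoric input from step two. Finally, your closing suggestion of matching $M_I$ with the Pappas--Zhu local model by a moduli-compatible comparison is exactly the open problem stated in the introduction: the Pappas--Zhu schemes are defined by flat closure and do not come with a moduli interpretation, so producing such a comparison \emph{is} the content of the conjecture, not a route around it. The proposal therefore does not close the gap, and no argument in the paper does either outside the case of Theorem~\ref{st:main_thm}.
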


The main result of this paper is that $M_I$ at least corrects for Counterexample \ref{ceg}, i.e.\ we prove Conjecture \ref{st:conj} in the setting of the counterexample.

\begin{thm}\label{st:main_thm}
For odd $n$ and signature $(n-1,1)$, $M_{\{m\}} = M_{\{m\}}^\loc$.
\end{thm}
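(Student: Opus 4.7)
The plan is to verify that $M_{\{m\}}$ is flat over $\Spec \O_E$. By construction the inclusion $M_{\{m\}}^\loc \subset M_{\{m\}}$ is an equality on generic fibers, so once flatness is established $M_{\{m\}}$ must coincide with the scheme-theoretic closure $M_{\{m\}}^\loc$.

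The first step is to reduce to a statement about the special fiber alone. From the chain
\[
  M_{\{m\}}^\loc \subset M_{\{m\}} \subset M_{\{m\}}^\spin
\]
and the equality of underlying topological spaces $|M_{\{m\}}^\spin| = |M_{\{m\}}^\loc|$ established in \cite{sm11d,sm14}, the scheme $M_{\{m\}}$ already has the correct underlying topological space in its special fiber. The real content is therefore to show that the special fiber of $M_{\{m\}}$ is \emph{reduced}; together with the correct generic fiber and the correct topological special fiber, this will imply $M_{\{m\}}$ is flat with the expected closure.

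The next step is explicit local analysis. Since $I=\{m\}$ with $n$ odd is of special maximal parahoric type, the moduli problem for $M_{\{m\}}^\naive$ involves a single self-dual lattice, so $M_{\{m\}}^\naive$ embeds in a single Grassmannian of rank-$s$ subspaces; for signature $(n-1,1)$ the universal filtration is moreover rank one. Following the affine chart technique of \cite{pappas00} and \cite{paprap09}, I would center neighborhoods at the worst point of the naive special fiber and write the matrix entries of the universal filtration explicitly. On each such chart the wedge, spin, and new conditions translate into polynomial equations in these entries, and the goal is to compute the ideal cut out by the new condition together with the defining equations of $M_{\{m\}}^\naive$.

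The principal obstacle is verifying that the new condition indeed corrects Counterexample \ref{ceg} by cutting out a reduced special fiber. Concretely, I expect to compare the ideal defining $M_{\{m\}}$ locally with the ideal of the corresponding affine Schubert variety, which is the special fiber of $M_{\{m\}}^\loc$, and to show they agree. The key calculation is to identify precisely which sign-corrected spin relations — those imposed by the common refinement of the Pappas--Rapoport spin condition and the Kottwitz condition — account for the missing equations needed to force reducedness. I anticipate this to be a pfaffian/determinantal computation in the spirit of \cite{paprap09}, simplified by the rank-one nature of the filtrations and the high symmetry available in the special maximal parahoric case.
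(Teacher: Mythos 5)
Your overall strategy is the same as the paper's: reduce the schematic equality $M_{\{m\}}^\loc = M_{\{m\}}$ to a statement about special fibers, exploit topological flatness of $M_{\{m\}}^\spin$ from \cite{sm11d,sm14}, and then analyze an affine chart around the worst point. However, several details are off and the key step is described too vaguely to count as a complete plan.

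First, a factual slip in the setup: for $I=\{m\}$ with $n$ odd, $M_{\{m\}}^\naive$ embeds into the Grassmannian of rank-$n$ (not rank-$s$) $R$-submodules of $\Lambda_m \otimes R$; the universal $\F_m$ is \emph{not} rank one for signature $(n-1,1)$. What happens is that $\pi\otimes 1$ is a nilpotent endomorphism of $\F_m$ with one nontrivial eigenvalue $\pi$ and $n-1$ eigenvalues $-\pi$, but $\F_m$ itself is always half-dimensional in $\Lambda_m\otimes R$.

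Second, and more substantively, your reduction to ``show the special fiber of $M_{\{m\}}$ is reduced'' is a less tractable formulation than the one the paper actually pursues. The paper uses Arzdorf's result that the open Schubert cell in $M_k^\wedge$ (and hence in $M_k$) is already reduced, and then reduces the question entirely to the local ring at the worst point. There, Richarz's isomorphism $U^\loc \isoarrow \AA_k^{n-1}$ (via $(X_1,X_3)\mapsto X_3$) together with finiteness of $U$ over $\AA_k^{n-1}$ lets one apply Nakayama: it suffices to show the fiber of $U$ over $0$ is $\Spec k$, i.e.\ that $X_3=0$ forces $X_1=0$ under the new condition \eqref{it:new_cond}. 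You would need to identify some version of this finiteness-plus-Nakayama device to make ``compare the ideal with that of the Schubert variety'' into an actual argument.

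Third, the calculation you anticipate — a ``pfaffian/determinantal computation'' à la \cite{paprap09} — does not describe the mechanism that actually makes the argument close. The paper's proof of the key Proposition (``$X_3=0 \Rightarrow X_1=0$'') works by explicitly determining an $\O_F$-basis for $W(\Lambda_m)_{-1}^{n-1,1}$ in terms of worst-term computations on the vectors $g_S - \sgn(\sigma_S)\, g_{S^\perp}$, obtaining a free $R$-basis for $L_m^{n-1,1}(R)$. Applied to the wedge of the matrix columns, this shows that condition \eqref{it:new_cond} imposes on the $m\times m$ blocks $A,B,C,D$ of $X_1$ the \emph{opposite-sign} versions of the symmetry relations $D=A^\ad$, $B=-B^\ad$, $C=-C^\ad$ already imposed by condition \eqref{it:LM4}; since $\charac k\neq 2$, these two families of relations together force $X_1=0$. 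This ``sign-clash'' observation is the structural heart of the proof, and your proposal does not anticipate it; without something like it, a generic determinantal calculation would not produce the needed cancellation.
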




Our proof of the theorem 
is based on calculations of Arzdorf \cite{arzdorf09}, who 
studied in detail the local equations describing $M_{\{m\}}^\loc$ when $n$ is odd.  
In the setting of the theorem, Richarz observed that the local model is actually smooth \cite{arzdorf09}*{Prop.\ 4.16}.  The condition defining $M_{\{m\}}$ can be used to define a related formally smooth Rapoport--Zink space which plays an important role in the forthcoming paper \cite{rapoportsmithlingzhang?}.  Relatedly, Richarz's smoothness result and Theorem \ref{st:main_thm} also imply that a certain moduli problem of abelian schemes (which is an integral model for a unitary Shimura variety) is smooth; we make this explicit in \s\ref{ss:Sh}.

While the condition defining $M_I$ can be formulated for any $I$ and any signature, outside of Counterexample \ref{ceg}, we do not know the extent to which the inclusion $M_I \subset M_I^\spin$ fails to be an equality.  Indeed Pappas and Rapoport have obtained a good deal of computational evidence for the flatness of $M_I^\spin$ in low rank cases, and we do not know of any counterexamples to the flatness of $M_I^\spin$ when $n$ is even.

The organization of the paper is as follows.  In \s\ref{s:mod_prob} we review the definition of the naive, wedge, and spin local models, and we formulate our refined condition.  In \s\ref{s:special_case} we explain Counterexample \ref{ceg} and reduce the proof of Theorem \ref{st:main_thm} to Proposition \ref{st:X_1=0}, whose proof occupies \s\ref{s:proof}.  In \s\ref{s:remarks} we collect various remarks.  We show that the condition defining $M_I$ implies the Kottwitz condition in \s\ref{ss:kottwitz}, and in \s\ref{ss:wedge_power_analogs} we formulate some analogous conditions for other wedge powers.  These are closed conditions on $M_I^\naive$ which hold on the generic fiber, and therefore hold on $M_I^\loc$; we show that they imply the wedge condition.\footnote{It may also be interesting to note that, at least in the setting of Theorem \ref{st:main_thm}, the condition defining $M_I$ itself implies the wedge condition.  See Remark \ref{rk:newcond==>wedge}.}
In \s\ref{ss:Sh} we give the aforementioned application of Theorem \ref{st:main_thm} to an explicit integral model of a unitary Shimura variety. 
We conclude the paper in \s\ref{ss:PEL_setting} by explaining how to formulate these conditions in the general PEL setting, where they again imply the Kottwitz condition and automatically hold on the flat closure of the generic fiber in the naive local model.  To be clear, the conditions we formulate in \s\ref{ss:PEL_setting} will not suffice to characterize the flat closure in general, since for example they do not account for the spin condition in the ramified unitary setting.  But it would be interesting to see if they prove useful in other situations in which spin conditions do not arise.

\subsection*{Acknowledgements}  It is a pleasure to thank Michael Rapoport, whose inquiries about the spin condition in the setting of Counterexample \ref{ceg} led to the discovery of this counterexample, which in turn spawned the paper.  I also heartily thank him and George Pappas for a number of inspiring conversations related to this work.  I finally thank the referees for their helpful suggestions and remarks.

\subsection*{Notation}

Throughout the paper $F/F_0$ denotes a ramified quadratic extension of discretely valued, non-Archimedean fields with respective rings of integers $\O_F$ and $\O_{F_0}$, respective uniformizers $\pi$ and $\pi_0$ satisfying $\pi^2 = \pi_0$, and common residue field $k$ of characteristic not $2$.%
\footnote{Many of the papers we will refer to also assume that $k$ is perfect, but any facts we need for general $k$ will follow from the case of perfect $k$ by descent.} 
%

We work with respect to a fixed integer $n \geq 2$.  For $i \in \{1,\dotsc,n\}$, we write
\[
   i^\vee := n+1-i.
\]
For $i \in \{1,\dotsc,2n\}$, we write
\[
   i^* := 2n+1-i.
\]
For $S \subset \{1,\dotsc,2n\}$, we write
\[
   S^* = \{\,i^* \mid i \in S\,\} \quad\text{and}\quad S^\perp = \{1,\dotsc,2n\} \smallsetminus S^*.
\]
We also define
\[
   \Sigma S := \sum_{i\in S}i.
\]
For $a$ a real number, we write $\lfloor a \rfloor$ for the greatest integer $\leq a$, and $\lceil a \rceil$ for the least integer $\geq a$.
We write $a,\dotsc,\wh b,\dotsc,c$ for the list $a,\dotsc,c$ with $b$ omitted.

\section{The moduli problem}\label{s:mod_prob}
\numberwithin{equation}{subsection}

In this section we review the definition of $M_I^\naive$, $M_I^\wedge$, and $M_I^\spin$ from \cite{paprap09}, and we introduce our further refinement to the moduli problem.

\subsection{Linear-algebraic setup}\label{ss:setup}

Consider the vector space $F^n$ with its standard $F$-basis $e_1,\dotsc,e_n$.  Let
\[
   \phi\colon F^n \times F^n \to F
\]
denote the $F/F_0$-Hermitian form which is split with respect to the standard basis, i.e.
\begin{equation}\label{split}
   \phi(ae_i,be_j) = \ol a b \delta_{ij^\vee}, \quad a, b \in F,
\end{equation}
where $a \mapsto \ol a$ is the nontrivial element of $\Gal(F/F_0)$.  Attached to $\phi$ are the respective alternating and symmetric $F_0$-bilinear forms
\[
   F^n \times F^n \to F_0
\]
given by
\[
   \langle x,y \rangle := \frac 1 2 \tr_{F/F_0}\bigl( \pi^{-1}\phi(x,y) \bigr)
   \quad\text{and}\quad
   (x,y) := \frac 1 2 \tr_{F/F_0}\bigl( \phi(x,y) \bigr).
\]

For each integer $i = bn+c$ with $0 \leq c < n$, define the standard $\O_F$-lattice
\begin{equation}\label{Lambda_i}
   \Lambda_i := \sum_{j=1}^c\pi^{-b-1}\O_F e_j + \sum_{j=c+1}^{n} \pi^{-b}\O_F e_j \subset F^n.
\end{equation}
For all $i$, the \aform-dual of $\Lambda_i$ in $F^n$ is $\Lambda_{-i}$, by which we mean that
\[
   \bigl\{\,x\in F^n \bigm| \langle\Lambda_i,x\rangle \subset \O_{F_0} \,\bigr\} = \Lambda_{-i},
\]
and
\begin{equation}\label{pairing}
   \Lambda_i \times \Lambda_{-i} \xra{\aform} \O_{F_0}
\end{equation}
is a perfect $\O_{F_0}$-bilinear pairing.  Similarly, $\Lambda_{n-i}$ is the \sform-dual of $\Lambda_i$ in $F^n$.
The $\Lambda_i$'s form a complete, periodic, self-dual lattice chain
\[
   \dotsb \subset \Lambda_{-2} \subset \Lambda_{-1} \subset \Lambda_0 \subset \Lambda_1 \subset \Lambda_2 \subset \dotsb.
\]

\subsection{Naive local model}\label{ss:naiveLM}

Let $I \subset \{0,\dotsc,m\}$ be a nonempty subset satisfying \eqref{disp:I_cond}, and let $r + s = n$ be a partition.  As in the introduction, let
\[
   E = F \quad\text{if}\quad r \neq s \quad\text{and}\quad E = F_0 \quad\text{if}\quad r = s.
\]
The \emph{naive local model $M_I^\naive$} is a projective scheme over $\Spec \O_E$.  It represents the moduli problem that sends each $\O_E$-algebra $R$ to the set of all families
\[
   (\F_i \subset \Lambda_i \otimes_{\O_{F_0}}R)_{i\in \pm I + n\ZZ}
\]
such that
\begin{enumerate}
\renewcommand{\theenumi}{LM\arabic{enumi}}
\item\label{it:LM1}
for all $i$, $\F_i$ is an $\O_F \otimes_{\O_{F_0}} R$-submodule of $\Lambda_i \otimes_{\O_{F_0}} R$, and an $R$-direct summand of rank $n$;
\item\label{it:LM2}
for all $i < j$, the natural arrow $\Lambda_i \otimes_{\O_{F_0}} R \to \Lambda_j \otimes_{\O_{F_0}} R$ carries $\F_i$ into $\F_j$;
\item\label{it:LM3}
for all $i$, the isomorphism $\Lambda_i \otimes_{\O_{F_0}} R \xra[\undertilde]{\pi \otimes 1} \Lambda_{i-n} \otimes_{\O_{F_0}} R$ identifies
\[
   \F_i \isoarrow \F_{i-n};
\]
\item\label{it:LM4}
for all $i$, the perfect $R$-bilinear pairing
\[
   (\Lambda_i \otimes_{\O_{F_0}} R) \times (\Lambda_{-i} \otimes_{\O_{F_0}} R)
   \xra{\aform \otimes R} R
\]
identifies $\F_i^\perp$ with $\F_{-i}$ inside $\Lambda_{-i} \otimes_{\O_{F_0}} R$; and
\item\label{it:kott_cond} (Kottwitz condition) for all $i$, the element $\pi \otimes 1 \in \O_F \otimes_{\O_{F_0}} R$ acts on $\F_i$ as an $R$-linear endomorphism with characteristic polynomial
\[
   \det(T\cdot \id - \pi \otimes 1 \mid \F_i) = (T-\pi)^s(T+\pi)^r \in R[T].
\]
\setcounter{filler}{\value{enumi}}
\end{enumerate}

When $r = s$, the polynomial on the right-hand side in the Kottwitz condition is to be interpreted as $(T^2 - \pi_0)^s$, which makes sense over any $\O_{F_0}$-algebra.

\subsection{Wedge and spin conditions}\label{ss:wedge_spin_conds}


We continue with $I$ and $(r,s)$ as before.  The \emph{wedge condition}
on an $R$-point $(\F_i)_i$ of $M_I^\naive$ is that
\begin{enumerate}
\renewcommand{\theenumi}{LM\arabic{enumi}}
\setcounter{enumi}{\value{filler}}
\item\label{it:wedge_cond}
if $r \neq s$, then for all $i$,
\[
   \sideset{}{_R^{s+1}}{\bigwedge} (\,\pi\otimes 1 + 1 \otimes \pi \mid \F_i\,) = 0
   \quad\text{and}\quad
   \sideset{}{_R^{r+1}}{\bigwedge} (\,\pi\otimes 1 - 1 \otimes \pi \mid \F_i\,) = 0.
\]
(There is no condition when $r=s$.)
\setcounter{filler}{\value{enumi}}
\end{enumerate}
The \emph{wedge local model $M_I^\wedge$} is the closed subscheme in $M_I^\naive$ 
where the wedge condition is satisfied.

We next turn to the spin condition, which involves the symmetric form \sform and requires some more notation.  Let
\[
   V := F^n \otimes_{F_0} F,
\]
regarded as an $F$-vector space of dimension $2n$ via the action of $F$ on the right tensor factor.  Let
\[
   W := \sideset{}{_F^n}\bigwedge V.
\]  
When $n$ is even, \sform is split over $F^n$, by which we mean that there is an $F_0$-basis $f_1,\dotsc,f_{2n}$ such that $(f_i,f_j) = \delta_{ij^*}$.  In all cases, $\sform \otimes_{F_0} F$ is split over $V$.
Hence there is a canonical decomposition
\[
   W = W_1 \oplus W_{-1}
\]
of $W$ as an $SO\bigl(\sform\bigr)(F) \isom SO_{2n}(F)$-representation.

Intrinsically, $W_1$ and $W_{-1}$ have the property that for any totally isotropic $n$-dimensional subspace $\F \subset V$, the line $\bigwedge_F^n\F \subset W$ is contained in $W_1$ or in $W_{-1}$, and in this way they distinguish the two connected components of the orthogonal Grassmannian $\OGr(n,V)$ over $\Spec F$.  Concretely, $W_1$ and $W_{-1}$ can be described as follows.  Let $f_1,\dotsc,f_{2n}$ be an $F$-basis for $V$.  For $S = \{i_1< \dots < i_n\} \subset \{1,\dotsc,2n\}$ of cardinality $n$, let
\begin{equation}\label{disp:f_S}
   f_S := f_{i_1} \wedge \dotsb \wedge f_{i_n} \in W,
\end{equation}
and let $\sigma_S$ be the permutation on $\{1,\dotsc,2n\}$ sending
\[
   \{1,\dotsc,n\} \xra[\undertilde]{\sigma_S} S
\] in increasing order and
\[
   \{n+1,\dotsc,2n\} \xra[\undertilde]{\sigma_S} \{1,\dotsc,2n\} \smallsetminus S
\]
in increasing order.  For varying $S$ of cardinality $n$, the $f_S$'s form a basis of $W$, and we define an $F$-linear operator $a$ on $W$ by defining it on them:
\[
   a(f_S) := \sgn(\sigma_S)f_{S^\perp}.
\]
Then, when $f_1,\dotsc,f_{2n}$ is a split basis for \sform,
\begin{equation}\label{disp:W_+-1}
   W_{\pm 1} = \spann_F\bigl\{\,f_S \pm \sgn(\sigma_S)f_{S^\perp}\bigm| \#S=n\,\bigr\}
\end{equation}
is the $\pm 1$-eigenspace for $a$.  Any other split basis is carried onto $f_1,\dotsc,f_{2n}$ by an element $g$ in the orthogonal group.  If $\det g = 1$ then $W_1$ and $W_{-1}$ are both $g$-stable, whereas if $\det g = -1$ then $W_1$ and $W_{-1}$ are interchanged by $g$.  In this way $W_1$ and $W_{-1}$ are independent of choices up to labeling.

For the rest of the paper, we pin down a particular choice of $W_1$ and $W_{-1}$ as in \cite{paprap09}*{\s7.2}.  If $n = 2m$ is even, then
\[
   -\pi^{-1}e_1,\dotsc,-\pi^{-1}e_m, e_{m+1},\dotsc,e_n,e_1,\dotsc,e_m, \pi e_{m+1},\dotsc, \pi e_{n}
\]
is a split ordered $F_0$-basis for \sform in $F^n$, and we take $f_1,\dotsc,f_{2n}$ to be the image of this basis in $V$.  If $n = 2m+1$ is odd, then we take $f_1,\dotsc,f_{2n}$ to be the split ordered basis
\begin{equation}\label{disp:PR_basis}
\begin{gathered}
   -\pi^{-1} e_1 \otimes 1, \dotsc, -\pi^{-1} e_m \otimes 1, e_{m+1}\otimes 1 - \pi e_{m+1} \otimes \pi^{-1},\\ e_{m+2} \otimes 1,\dotsc, e_n\otimes 1, e_1\otimes 1,\dotsc, e_m \otimes 1,\\ \frac{e_{m+1} \otimes 1 + \pi e_{m+1} \otimes \pi^{-1}}2, \pi e_{m+2} \otimes 1, \dotsc, \pi e_{n} \otimes 1.
\end{gathered}
\end{equation}
These are the same choices that are used in \citelist{\cite{sm11d}\cite{sm14}}.  For $\Lambda$ an $\O_{F_0}$-lattice in $F^n$,
\[
   W(\Lambda) := \sideset{}{_{\O_F}^n}\bigwedge (\Lambda \otimes_{\O_{F_0}} \O_F)
\]
is naturally an $\O_F$-lattice in $W$, and we define
\[
   W(\Lambda)_{\pm 1} := W_{\pm 1} \cap W(\Lambda).
\]


We now formulate the spin condition.  If $R$ is an $\O_F$-algebra, then the \emph{spin condition} on an $R$-point $(\F_i \subset \Lambda_i \otimes_{\O_{F_0}}R )_i$ of $M_I^\naive$ is that
\begin{enumerate}
\renewcommand{\theenumi}{LM\arabic{enumi}}
\setcounter{enumi}{\value{filler}}
\item\label{it:spin_cond}
for all $i$, the line $\bigwedge_R^n \F_i \subset W(\Lambda_i) \otimes_{\O_{F}}R$ is contained in
\[
   \im\bigl[ W(\Lambda_i)_{(-1)^s} \otimes_{\O_{F}} R 
      \to W (\Lambda_i) \otimes_{\O_{F}}R
   \bigr].
\]
\setcounter{filler}{\value{enumi}}
\end{enumerate}
This defines the spin condition when $r \neq s$.  When $r = s$, $W_{\pm 1}$ is defined over $F_0$ since \sform is already split before extending scalars $F_0 \to F$, and the spin condition on $M_{I,\O_F}^\naive$ descends to $M_I^\naive$ over $\Spec \O_{F_0}$.  In all cases, the \emph{spin local model $M_I^\spin$} is the closed subscheme of $M_I^\wedge$
where the spin condition is satisfied.

\begin{rk}
Our definition of $a$ above is the same as in \citelist{\cite{sm11d}*{\s2.5}\cite{sm14}*{\s2.4}}.  As noted in those papers, this agrees only up to sign with the analogous operators denoted $a_{f_1\wedge \dotsb \wedge f_{2n}}$ in \cite{paprap09}*{disp.\ 7.6} and $a$ in \cite{sm11b}*{\s2.3}, and there is a sign error in the statement of the spin condition in \cite{paprap09}*{\s7.2.1} tracing to this discrepancy.
\end{rk}

\subsection{Interlude: the sign of $\sigma_S$}

Here is an efficient means to calculate the sign $\sgn(\sigma_S)$ occurring in the expression \eqref{disp:W_+-1} for $W_{\pm 1}$.

\begin{lem}\label{st:sign_sigma_S}
For $S \subset \{1,\dotsc,2n\}$ of cardinality $n$,
\[
   \sgn(\sigma_S) = (-1)^{ \Sigma S + \lceil n/2 \rceil}.
\]
\end{lem}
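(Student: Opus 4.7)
The plan is to compute $\sgn(\sigma_S)$ directly by counting inversions in its one-line notation. Write $S = \{i_1 < \dotsb < i_n\}$ and $\{1,\dotsc,2n\} \smallsetminus S = \{j_1 < \dotsb < j_n\}$. By definition, $\sigma_S$ is the permutation of $\{1,\dotsc,2n\}$ whose one-line notation is
\[
   (i_1,\dotsc,i_n, j_1,\dotsc,j_n).
\]
Since both halves are already listed in increasing order, the only inversions of $\sigma_S$ are the pairs $(i_k, j_\ell)$ with $j_\ell < i_k$.

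The key observation is that for each $k$, the number of $j_\ell$ less than $i_k$ is $i_k - k$: among the $i_k$ elements of $\{1,\dotsc,i_k\}$, exactly $k$ of them (namely $i_1,\dotsc,i_k$) lie in $S$, leaving $i_k - k$ elements of the complement below $i_k$. Summing over $k$, the total number of inversions is
\[
   \sum_{k=1}^n (i_k - k) \;=\; \Sigma S - \tfrac{n(n+1)}{2}.
\]
Hence $\sgn(\sigma_S) = (-1)^{\Sigma S + n(n+1)/2}$.

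It remains to check the parity identity $n(n+1)/2 \equiv \lceil n/2 \rceil \pmod 2$. For $n = 2k$ one gets $n(n+1)/2 = k(2k+1) \equiv k \pmod 2$ and $\lceil n/2 \rceil = k$; for $n = 2k+1$ one gets $n(n+1)/2 = (2k+1)(k+1) \equiv k+1 \pmod 2$ and $\lceil n/2 \rceil = k+1$. Either way the two sides match, completing the proof. There is no real obstacle here; the argument is a direct inversion count, with the mildest subtlety being the final parity bookkeeping.
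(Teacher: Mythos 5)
Your proof is correct. It takes a genuinely different, if closely parallel, route to the paper's: where the paper computes $\sgn(\sigma_S)$ as the determinant of the associated permutation matrix via iterated Laplace expansion along the first $n$ columns, you count inversions of $\sigma_S$ in one-line notation directly. Both methods land on the same intermediate expression $(-1)^{\Sigma S + n(n+1)/2}$ and finish with the same parity check identifying $n(n+1)/2$ with $\lceil n/2\rceil$ mod $2$. Your inversion count is arguably the more transparently combinatorial of the two — the identity $\#\{\ell : j_\ell < i_k\} = i_k - k$ is exactly the right observation and the bookkeeping is clean — while the paper's determinant calculation is a touch more mechanical but avoids having to argue why the only inversions straddle the two halves. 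Either is a perfectly good proof; yours has no gaps.
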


\begin{proof}
Let $P$ denote the permutation matrix attached to $\sigma_S$, so that the $(i,j)$-entry of $P$ is $\delta_{i,\sigma_S(j)}$.  We compute $\det P$.  Say $S = \{i_1 < \dotsb < i_n\}$.  Using Laplace expansion along the $n$th column of $P$, then along the $(n-1)$st column, then along\dots, then along the first column, we find
\begin{align*}
   \det P &= (-1)^{i_n + n}(-1)^{i_{n-1} + n-1}\dotsm (-1)^{i_1 + 1}\det I_n\\
          &= (-1)^{\Sigma S + \frac{n(n+1)}2}\\
          &= (-1)^{\Sigma S + \lceil n/2 \rceil}. \qedhere
\end{align*}
\end{proof}

\subsection{Further refinement}\label{ss:new_cond}

We now formulate our refinement to the moduli problem defining $M_I^\spin$.  The idea is to further restrict the intersection in the definition of $W(\Lambda_i)_{\pm 1}$ in a way that incorporates a version of the Kottwitz condition.  We continue with the notation from before.

The operator $\pi \otimes 1$ acts $F$-linearly and semisimply on $V$ with eigenvalues $\pi$ and $-\pi$.  Let
\[
   V_\pi \quad\text{and}\quad V_{-\pi}
\]
denote its respective eigenspaces.  Let
\[
   W^{r,s} := \sideset{}{_F^r}\bigwedge V_{-\pi} \otimes_F \sideset{}{_F^s}\bigwedge V_\pi.
\]
Then $W^{r,s}$ is naturally a subspace of $W$, and
\[
   W = \bigoplus_{r+s=n}W^{r,s}.
\]
Let
\[
   W_{\pm 1}^{r,s} := W^{r,s} \cap W_{\pm 1}.
\]
For any $\O_{F_0}$-lattice $\Lambda$ in $F^n$, let
\[
   W(\Lambda)_{\pm 1}^{r,s} := W_{\pm 1}^{r,s} \cap W(\Lambda) \subset W.
\]

Our new condition is just the analog of the spin condition with $W(\Lambda_i)_{(-1)^s}^{r,s}$ in place of $W(\Lambda_i)_{(-1)^s}$.  To lighten notation, for $R$ an $\O_F$-algebra, define
\[
   L_i^{r,s}(R) := \im\bigl[ W(\Lambda_i)_{(-1)^s}^{r,s} \otimes_{\O_F} R \to W(\Lambda_i) \otimes_{\O_{F}}R\bigr].
\]
The condition on an $R$-point $(\F_i \subset \Lambda_i \otimes_{\O_{F_0}} R)_i$ of $M_I^\naive$ is that
\begin{enumerate}
\renewcommand{\theenumi}{LM\arabic{enumi}}
\setcounter{enumi}{\value{filler}}
\item\label{it:new_cond}
for all $i$, the line $\bigwedge_R^n \F_i \subset W(\Lambda_i)\otimes_{\O_{F}}R$ is contained in $L_i^{r,s}(R)$.
\end{enumerate}
This defines the condition when $r \neq s$.  When $r = s$, the subspaces $W^{r,s}$ and $W_{\pm 1}$ are Galois-stable, and the condition descends from $M_{I,\O_F}^\naive$ to $M_I^\naive$.  In all cases, we write $M_I$ for the locus in $M_I^\spin$ where the condition is satisfied.

Clearly $M_I$ is a closed subscheme of $M_I^\spin$.  
Our new condition is also already satisfied in the generic fiber:  if $R$ is an $F$-algebra and $(\F \subset V \otimes_F R)$ is an $R$-point on $M_I^\naive$, then by the Kottwitz condition it is automatic that $\bigwedge_R^n \F \subset W^{r,s} \otimes_F R$ inside $W \otimes_F R$.  Thus there is a diagram of closed immersions
\[
   M_I^\loc \subset M_I \subset M_I^\spin \subset M_I^\wedge \subset M_I^\naive
\]
which are all equalities between generic fibers.  Conjecture \ref{st:conj} is that $M_I^\loc = M_I$ in general.

\section{The case of $n$ odd, $I = \{m\}$, and signature $(n-1,1)$}\label{s:special_case}

The main goal of this section and \s\ref{s:proof} is to prove Theorem \ref{st:main_thm}.  We explain Counterexample \ref{ceg} along the way.  Throughout we specialize to the case that $n = 2m + 1$ is odd, $I =\{m\}$, and $(r,s) = (n-1,1)$.  This is a case of special maximal parahoric level structure studied in detail by Arzdorf in \cite{arzdorf09}, and we begin by reviewing the calculations of his that will be relevant for us.  Of course $\O_E = \O_F$.  To lighten notation, we suppress the set $\{m\}$, so that $M^\naive = M_{\{m\}}^\naive$, $M = M_{\{m\}}$, etc.

\subsection{Review of Arzdorf's calculations}\label{ss:arzdorf_calcs}

It is clear from its definition that $M^\naive$ is naturally a closed subscheme of the Grassmannian $\Gr(n,\Lambda_m \otimes_{\O_{F_0}} \O_F)$ over $\Spec \O_F$.  Arzdorf computes an affine chart on the special fiber $M_{k}^\naive$ around its ``worst point'' in  \cite{arzdorf09}*{\s4} as the restriction of one of the standard open affine charts on the Grassmannian.%
\footnote{Strictly speaking, Arzdorf obtains equations describing the $\ol k$-points in an open subscheme of $M^\naive$, but implicit in his discussion are equations defining this subscheme itself, once one additionally incorporates the Kottwitz condition.}
Here the ``worst point'' is the $k$-point
\[
   (\pi\otimes 1) \cdot (\Lambda_m \otimes_{\O_{F_0}} k) \subset \Lambda_m \otimes_{\O_{F_0}} k.
\]
The reason for this terminology is that the geometric special fiber $M_{\ol k}^\naive$ embeds into an affine flag variety for $GU_n$, where it decomposes (as a topological space) into a disjoint union of Schubert cells, and the (image of the) worst point is the unique closed Schubert cell.  See \cite{paprap09}*{\s2.4.2, \s5.5}.

Following Arzdorf, take the ordered $k$-basis
\begin{multline}\label{disp:Arzdorf_basis}
   e_{m+2} \otimes 1, \dotsc, e_n\otimes 1, \pi^{-1}e_1\otimes 1,\dotsc, \pi^{-1}e_m\otimes 1, e_{m+1}\otimes 1,\\ \pi e_{m+2} \otimes 1, \dotsc, \pi e_n \otimes 1, e_1\otimes 1,\dotsc,e_m\otimes 1, \pi e_{m+1} \otimes 1
\end{multline}
for $\Lambda_m \otimes_{\O_{F_0}} k$.  With respect to this basis, the standard open affine chart $U^{\Gr}$ on $\Gr(n,\Lambda_m \otimes_{\O_{F_0}}k)$ containing the worst point is the $k$-scheme of $2n\times n$-matrices
\begin{equation}\label{disp:U^Gr}
\begin{pmatrix}
   X\\
   I_n
\end{pmatrix}
\end{equation}
(the worst point itself corresponds to $X = 0$).
Define
\begin{equation}\label{U's}
   U^\loc \subset U \subset U^\spin \subset U^\wedge \subset U^\naive \subset U^{\Gr}
\end{equation}
by intersecting $U^{\Gr}$ with $M^\loc$, $M$, $M^\spin$, $M^\wedge$, and $M^\naive$, respectively. Write
\begin{equation}
   X = \begin{pmatrix}
          X_1 & X_2\\
          X_3 & X_4
       \end{pmatrix},
\end{equation}
where $X_1$ is of size $(n-1)\times (n-1)$, $X_2$ is of size $(n-1)\times 1$, $X_3$ is of size $1 \times (n-1)$, and $X_4$ is scalar.

Arzdorf shows that $X_4^2 = 0$ on $U^\naive$ \cite{arzdorf09}*{p.\ 701}.  Let $U_{X_4 = 0}^\naive$ be the closed subscheme of $U^\naive$ defined by imposing $X_4 = 0$.  Arzdorf shows that $X_2 = 0$ on $U_{X_4=0}^\naive$, and that conditions \eqref{it:LM1}--\eqref{it:LM4} translate to%
\footnote{Strictly speaking, Arzdorf does not explicitly address the equations arising from the lattice inclusion $\Lambda_{m+1} \subset \pi^{-1}\Lambda_m$ in \eqref{it:LM2}.  It is straightforward to verify that these equations add nothing further to \eqref{disp:naive_eqns} when $X_4 = 0$.}
\begin{equation}\label{disp:naive_eqns}
   -J X_3^\Rt X_3 = X_1 + J X_1^\Rt J, \quad X_1^2 = 0, \quad\text{and}\quad X_3 X_1 = 0,
\end{equation}
where $J$ is the $(n-1) \times (n-1)$-matrix
\[
   J := \begin{pmatrix}
       & & & & & 1\\
       & & & & \iddots\\
       & & & 1\\
       & &-1\\
       & \iddots\\
       -1
       \end{pmatrix}
       \qquad\text{($m \times m$ blocks)}.
\]
Arzdorf does not translate the Kottwitz condition, since it is automatically satisfied on the reduced special fiber of $M^\naive$.  Let us do so now.  Regarding the columns of \eqref{disp:U^Gr} as a basis for the subspace $\F_m$, the operator $\pi\otimes 1$ acts as
\[
   (\pi \otimes 1) \cdot \begin{pmatrix} X\\ I_n \end{pmatrix}
     = \begin{pmatrix} 0 \\ X \end{pmatrix}.
\]
It follows that the Kottwitz condition on $U^{\Gr}$ is that%
\footnote{It is an easy consequence of \eqref{it:LM3} and \eqref{it:LM4} that the Kottwitz condition for $\F_m$ implies the Kottwitz condition for $\F_{m+1}$.}
\[
   \charac_X(T) = T^n.
\]
When the rightmost column of $X$ is zero, which is the case on $U_{X_4=0}^\naive$, the Kottwitz condition becomes
\begin{equation}\label{disp:Kottwitz_cond}
	\charac_{X_1}(T) = T^{n-1}.
\end{equation}
We conclude that $U_{X_4 = 0}^\naive$ is the $k$-scheme of $n\times(n-1)$-matrices $\bigl(\begin{smallmatrix} X_1 \\ X_3\end{smallmatrix}\bigr)$ satisfying \eqref{disp:naive_eqns} and \eqref{disp:Kottwitz_cond}.

In our case of signature $(n-1,1)$, Arzdorf shows in \cite{arzdorf09}*{\s4.5} that the wedge condition on $U_{X_4=0}^\naive$ is the condition
\begin{equation}\label{disp:wedge_eqns}
   \sideset{}{^2}\bigwedge \begin{pmatrix} X_1 \\ X_3 \end{pmatrix} = 0.
\end{equation}
When \eqref{disp:wedge_eqns} is satisfied, the Kottwitz condition \eqref{disp:Kottwitz_cond} reduces to the condition
\begin{equation}\label{disp:Tr_cond}
   \tr X_1 = 0.
\end{equation}
Arzdorf proves that for any signature, $U^\loc$ is reduced \cite{arzdorf09}*{Th.\ 2.1}, so that
\[
   U^\loc \subset U_{X_4=0}^\naive.
\]
It is an observation of Richarz \cite{arzdorf09}*{Prop.\ 4.16} that for signature $(n-1,1)$, the map
\begin{equation}\label{disp:U_map}
   \begin{pmatrix} X_1 \\ X_3 \end{pmatrix} \mapsto X_3
\end{equation}
induces an isomorphism
\begin{equation}\label{disp:U^loc_isom}
   U^\loc \isoarrow \AA_k^{n-1}.
\end{equation}

\subsection{Failure of flatness of $M^\spin$}\label{ss:failure}

In this subsection we explain Counterexample \ref{ceg}.  We use the calculations of the previous subsection, whose notation we retain.  The first item of business is the following.

\begin{lem}\label{st:spin=>X_4=0}
On $U^\naive$, the spin condition implies that $X_4 = 0$.
\end{lem}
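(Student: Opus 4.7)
The plan is to isolate, on $U^\naive$, a single Plücker coordinate of $\bigwedge_R^n \F_m$ in the Pappas--Rapoport basis of $W$ that the spin condition forces to vanish and that simultaneously computes to $X_4$ up to a unit.

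First I will record the change of basis between the Arzdorf basis \eqref{disp:Arzdorf_basis} of $\Lambda_m \otimes_{\O_{F_0}} \O_F$ and the Pappas--Rapoport basis \eqref{disp:PR_basis} of $V$. All Arzdorf basis vectors equal $\pm f_j$ for some $j$, except at the ``middle'' position $m+1$, where a short calculation gives
\[
   e_{m+1}\otimes 1 = \tfrac12 f_{m+1} + f_{n+m+1}
   \quad\text{and}\quad
   \pi e_{m+1}\otimes 1 = -\tfrac{\pi}{2} f_{m+1} + \pi f_{n+m+1}.
\]
Writing each column of $\binom{X}{I_n}$ in the basis $\{f_k\}$ then gives a $2n\times n$ matrix $B$ over $R$, and the coefficient $c_S$ of $f_S$ in $\bigwedge_R^n \F_m$ equals the minor $\det(B_S)$ on the rows indexed by $S$.

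Next I will choose the $n$-element subset
\[
   T := \{m+1\}\cup\{n+1,\dotsc,n+m\}\cup\{n+m+2,\dotsc,2n\}.
\]
A direct check yields $T^\perp = T$, and Lemma \ref{st:sign_sigma_S} combined with a short parity calculation ($\Sigma T + \lceil n/2\rceil \equiv 0 \pmod 2$ for $n = 2m+1$) gives $\sgn(\sigma_T) = +1$, so $f_T \in W_{+1}$. Since $s = 1$ so $(-1)^s = -1$, the spin condition forces $\bigwedge_R^n \F_m$ into the image of $W(\Lambda_m)_{-1}\otimes R$, which is annihilated by any linear functional vanishing on $W_{-1}$; applying the $f_T$-coordinate functional (well-defined because $f_T \in W_{+1}$ is complementary to $W_{-1}$) yields $\det(B_T) = 0$.

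Finally I will compute $\det(B_T)$ by Laplace expansion along the last column. The only nonzero entry in this column is the top one (coming from row $m+1$ of $B$), equal to $X_4/2 - \pi/2$; the remaining $(n-1)\times(n-1)$ minor is a permutation matrix whose permutation is the block swap of $\{1,\dotsc,m\}$ with $\{m+1,\dotsc,2m\}$, of sign $(-1)^m$. Hence $\det(B_T) = (-1)^m(X_4 - \pi)/2$. Since $U^\naive$ lives in the special fiber, $\pi = 0$, and the condition $\det(B_T) = 0$ combined with invertibility of $2$ produces $X_4 = 0$.

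The main obstacle will be the bookkeeping at position $m+1$: the Pappas--Rapoport basis is designed so that $f_{m+1}$ and $f_{n+m+1}$ mix with coefficients involving $\tfrac12$ and $\pi$, and one has to track these carefully, both to identify the Arzdorf minor that yields the $X_4$-contribution and to verify that no entries of $X_1$, $X_2$, or $X_3$ contaminate $c_T$ (in my calculation they all happen to live in columns other than the last, which is the only column of $B_T$ with a nonzero entry below row $1$). Once the signs and fractions are organized, the spin condition collapses to $X_4 = 0$ in one line.
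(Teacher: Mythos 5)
Your proof is correct, and while it is built from the same raw materials as the paper's (the two ``middle'' Pappas--Rapoport basis vectors $f_{m+1}$, $f_{n+m+1}$ and the two self-perp sets $S_a := \{m+1,n+1,\dotsc,\wh{n+m+1},\dotsc,2n\}$ and $S_b := \{n+1,\dotsc,2n\}$), it is organized dually: you single out $f_{S_a} \in W_{+1}$ (a good choice, since for $\varepsilon = -1$ this is the ``killed'' self-perp element) and apply the $f_{S_a}$-coordinate functional to the Pl\"ucker vector of $\F_m$, whereas the paper works on the other side and constrains the $e_{S_a}$-coefficient of any integral element of $W(\Lambda_m)_{\varepsilon}$ via the $\pi^{-1}$-blowup of $f_{S_b}$. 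In effect the paper's key estimate (that the coefficient of $f_{S_b}$ must have $\ord_\pi \geq 1$ in any integral linear combination) is replaced in your argument by the verification that the change of basis from the Arzdorf $e$-basis to the $f$-basis has entries in $\O_F$, so that the $f_{S_a}$-coordinate descends to a well-defined $R$-linear functional on $W(\Lambda_m)\otimes_{\O_F} R$ which annihilates $N_{-1}$. This integrality of $B$ is the crux of your argument and you do record the change-of-basis formulas from which it follows, but the justification in parentheses (``well-defined because $f_T \in W_{+1}$ is complementary to $W_{-1}$'') is not quite the point: what is needed is that the Arzdorf basis is an $\O_F$-linear combination of the $f$-basis (but not conversely, since $\det A = \pm\pi$), which is what makes the functional land in $\O_F$ while still vanishing on $W_{-1}$. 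With that gloss, your determinant calculation $\det(B_T) = (-1)^m(X_4 - \pi)/2$ checks out (I verified the $f_{S_a}$-coordinate of the Pl\"ucker vector equals $\tfrac12 c_{S_a} + (-1)^{m+1}\tfrac{\pi}{2}c_{S_b}$ with $c_{S_a} = (-1)^m X_4$ and $c_{S_b} = 1$), and setting $\pi = 0$ gives $X_4 = 0$. What your version buys is a cleaner one-line conclusion from a linear functional; what the paper's version buys is a presentation that handles both $\varepsilon = \pm 1$ uniformly (which is convenient for its reuse in Lemma~\ref{st:N_lem}).
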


Before proving the lemma, we need some more notation.  Consider the ordered $\O_{F_0}$-basis
\[
   \pi^{-1}e_1,\dotsc, \pi^{-1}e_m, e_{m+1},\dotsc,e_n, e_1,\dotsc, e_m, \pi e_{m+1}, \dotsc, \pi e_n
\]
for $\Lambda_m$.  After extending scalars $\O_{F_0} \to k$, this gives Arzdorf's basis \eqref{disp:Arzdorf_basis}, but in a different order.  After extending scalars $\O_{F_0} \to F$, this gives an ordered $F$-basis for $V$, and for $S \subset \{1,\dotsc,2n\}$ a subset of cardinality $n$, we define
\begin{equation}\label{disp:e_S}
	e_S \in W
\end{equation}
with respect to this basis for $V$ as in \eqref{disp:f_S}.  For varying $S$ of cardinality $n$, the $e_S$'s form an $\O_F$-basis for $W(\Lambda_m) \subset W$.  Given an $\O_F$-algebra $R$, we will often abuse notation and continue to write $e_S$ for its image in $W(\Lambda_m) \otimes_{\O_{F}} R$.

Let $f_1,\dotsc,f_{2n}$ denote the split ordered basis \eqref{disp:PR_basis} for $V$.

Now let $R$ be a $k$-algebra, and let
\[
   N_{\pm 1} := \im\bigl[ W(\Lambda_m)_{\pm 1} \otimes_{\O_{F}} R 
      \to W(\Lambda_m) \otimes_{\O_{F}}R
   \bigr].
\]

\begin{proof}[Proof of Lemma \ref{st:spin=>X_4=0}]
Let $(\F_m \subset \Lambda_m \otimes_{\O_{F_0}}R)$ be an $R$-point on $U^\naive$.
Regarding each column of the matrix \eqref{disp:U^Gr} as an $R$-linear combination of Arzdorf's basis elements \eqref{disp:Arzdorf_basis}, the wedge of these columns (say from left to right) is expressible as a linear combination
\begin{equation}\label{disp:lin_comb}
   \sum_{\substack{S \subset \{1,\dotsc,2n\}\\ \#S = n}} c_S e_S \in W(\Lambda_m) \otimes_{\O_{F}} R, \quad c_S \in R.
\end{equation}
We have
\[
   c_{\{m+1,n+1,\dotsc,\wh{n+m+1},\dotsc,2n\}} = (-1)^m X_4.
\]
The spin condition for arbitrary signature $(r,s)$ is that the element \eqref{disp:lin_comb} is contained in $N_{(-1)^s}$.  We are going to show that if \eqref{disp:lin_comb} is contained in $N_1$ or in $N_{-1}$, then $c_{\{m+1,n+1,\dotsc,\wh{n+m+1},\dotsc,2n\}}$ must vanish.  Let $\varepsilon \in \{\pm 1\}$.

By \eqref{disp:W_+-1}, every element in  $W(\Lambda_m)_{\varepsilon}$ is an $F$-linear combination of elements of the form
\begin{equation}\label{disp:f_Spmf_Sperp}
   f_S + \varepsilon \sgn(\sigma_S) f_{S^\perp}.
\end{equation}
With respect to the $e_S$-basis for $W$, the only elements of the form \eqref{disp:f_Spmf_Sperp} which can possibly involve $e_{\{m+1,n+1,\dotsc,\wh{n+m+1},\dotsc,2n\}}$ are for $S$ one of the sets
\[
   \{m+1,n+1,\dotsc,\wh{n+m+1},\dotsc,2n\} \quad\text{and}\quad \{n+1,\dotsc,2n\}.
\]
These are also the only two sets for which \eqref{disp:f_Spmf_Sperp} can possibly involve $e_{\{n+1,\dotsc,2n\}}$.  Both of these sets are self-perp.  For one of them, which we call $T$, \eqref{disp:f_Spmf_Sperp} equals $2 f_T$, and for the other, \eqref{disp:f_Spmf_Sperp} equals $0$, as follows for example from Lemma \ref{st:sign_sigma_S}.  The element $2 f_T$ is of the form
\begin{multline*}
   (\text{unit in }\O_F^\times)\cdot (e_{m+1} \otimes 1 \pm \pi e_{m+1} \otimes \pi^{-1})\\ \wedge (e_1\otimes 1) \wedge \dotsb \wedge (e_m\otimes 1) \wedge (\pi e_{m+2} \otimes 1) \wedge \dotsb \wedge (\pi e_n \otimes 1),
\end{multline*}
which in turn is of the form
\begin{equation}\label{disp:2f_T}
   (\text{unit in }\O_F^\times) \cdot e_{\{m+1,n+1,\dotsc,\wh{n+m+1},\dotsc,2n\}} \pm \pi^{-1}(\text{unit in }\O_F^\times)  \cdot e_{\{n+1,\dotsc,2n\}}.
\end{equation}

Now suppose we have an $F$-linear combination
\begin{equation}\label{disp:lin_comb2}
   c_T \cdot 2f_T + \sum_{S\neq T}c_S\bigl(f_S + \varepsilon \sgn(\sigma_S) f_{S^\perp}\bigr) \in W(\Lambda_m)_{\varepsilon}.
\end{equation}
By writing this as a linear combination of $e_S$'s and looking at the $e_{\{n+1,\dotsc,2n\}}$-term, we conclude from the above discussion and \eqref{disp:2f_T} that
\[
   \ord_\pi (c_T) \geq 1.
\]
This and \eqref{disp:2f_T} imply that the $e_{\{m+1,n+1,\dotsc,\wh{n+m+1},\dotsc,2n\}}$-term in \eqref{disp:lin_comb2} dies in $N_\varepsilon$, since $\pi R = 0$.
\end{proof}

By the lemma $U^\spin \subset U_{X_4=0}^\naive$, and we conclude that $U^\spin$ is the $k$-scheme of matrices satisfying \eqref{disp:naive_eqns}, \eqref{disp:wedge_eqns}, and \eqref{disp:Tr_cond}, plus the spin condition.  When $n \geq 5$, we are going to show that $M^\spin$ is not flat over $\Spec \O_F$ by showing that $U^\spin \neq U^\loc$.  The remaining calculation that we need is the following.  We continue with our $k$-algebra $R$ and $R$-modules $N_{\pm 1}$.

\begin{lem}\label{st:N_lem}
For $\varepsilon \in \{\pm 1\}$, the elements
\[
   e_{\{n+1,\dotsc,2n\}} \quad\text{and}\quad e_{\{i,n+1,\dotsc,\wh{n+i},\dotsc,2n\}} \quad\text{for}\quad i \in \{1,\dotsc,\wh{m+1},\dotsc,n\}
\] are contained in $N_\varepsilon$.
\end{lem}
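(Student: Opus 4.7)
The plan is to exhibit, for each index set $S$ in the statement and each $\varepsilon \in \{\pm 1\}$, an explicit element $w \in W(\Lambda_m)_\varepsilon$ whose reduction modulo $\pi$ is a unit multiple of $e_S$ in $W(\Lambda_m)/\pi W(\Lambda_m)$.  Because $R$ is a $k$-algebra, any such $w$ then maps to a unit multiple of $e_S \otimes 1$ in $W(\Lambda_m) \otimes_{\O_F} R$, which places $e_S$ in $N_\varepsilon$.  By \eqref{disp:W_+-1}, $W(\Lambda_m)_\varepsilon = W_\varepsilon \cap W(\Lambda_m)$ is the integral part of the $F$-span of the elements $f_S + \varepsilon \sgn(\sigma_S) f_{S^\perp}$, so the task reduces to constructing $F$-linear combinations of the latter whose $\pi^{-1}$-contributions become integral after multiplying by $\pi$ and whose mod-$\pi$ image isolates the desired $e_S$.

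The preliminary step, in the spirit of the proof of Lemma \ref{st:spin=>X_4=0}, is to re-express each $f_j$ from \eqref{disp:PR_basis} in terms of the ordered $\O_{F_0}$-basis $b_1, \dotsc, b_{2n}$ of $\Lambda_m$ used to define the $e_S$'s.  One reads off that $f_j = -b_j$ for $j \in \{1,\dotsc,m\}$ and $f_j = b_j$ for $j \notin \{1,\dotsc,m+1,n+m+1\}$, together with the exceptional formulas $f_{m+1} = b_{m+1} - \pi^{-1} b_{n+m+1}$ and $f_{n+m+1} = \tfrac{1}{2}(b_{m+1} + \pi^{-1} b_{n+m+1})$.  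Consequently, whenever $m+1$ or $n+m+1$ lies in an index set, the wedge $f_S$ decomposes as the sum of an integral $e$-basis term and a $\pi^{-1}$-multiple of another $e$-basis term obtained by swapping $m+1 \leftrightarrow n+m+1$ in the index set.

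For the self-dual set $T = \{n+1,\dotsc,2n\}$, direct expansion yields $2 f_T = (-1)^m e_{T'} + \pi^{-1} e_T$ where $T' := \{m+1, n+1, \dotsc, \wh{n+m+1}, \dotsc, 2n\}$ is also self-dual, and a parallel computation gives $2 f_{T'} = 2 e_{T'} - 2(-1)^m \pi^{-1} e_T$.  Lemma \ref{st:sign_sigma_S} gives $\sgn(\sigma_T) = -1$ and $\sgn(\sigma_{T'}) = 1$, so $2\pi f_T \in W(\Lambda_m)_{-1}$ and $2\pi f_{T'} \in W(\Lambda_m)_1$; both reduce mod $\pi$ to unit multiples of $e_T$, giving $e_T \in N_\varepsilon$ for both signs.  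For $S_i = \{i, n+1, \dotsc, \wh{n+i}, \dotsc, 2n\}$ with $i \neq m+1$, one checks $S_i^\perp = S_{n+1-i}$; introducing the auxiliary set $U_i := \{i, m+1\} \cup (\{n+1,\dotsc,2n\} \setminus \{n+i, n+m+1\})$ one similarly checks $U_i^\perp = U_{n+1-i}$.  Setting $i' := n+1-i$, expansion in the $e$-basis shows that each of $f_{S_i}, f_{S_{i'}}, f_{U_i}, f_{U_{i'}}$ is an integral combination of an $e_{U_\bullet}$-term and a $\pi^{-1}$-multiple of an $e_{S_\bullet}$-term.  Lemma \ref{st:sign_sigma_S} further yields $\sgn(\sigma_{S_i}) = 1$ and $\sgn(\sigma_{U_i}) = -1$ independently of $i$, so both $\pi(f_{S_i} + \varepsilon f_{S_{i'}})$ and $\pi(f_{U_i} - \varepsilon f_{U_{i'}})$ lie in $W(\Lambda_m)_\varepsilon$, and their reductions mod $\pi$ give two linearly independent $k$-combinations of $e_{S_i}$ and $e_{S_{i'}}$.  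Since $2$ is a unit in $k$, this $2 \times 2$ system inverts to give $e_{S_i}, e_{S_{i'}} \in N_\varepsilon$.

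The main obstacle is coordinated sign bookkeeping: permutation signs from sorting wedge products into increasing index order, sign contributions from $f_j = -b_j$ for $j \leq m$, and the factors $\sgn(\sigma_S)$ from Lemma \ref{st:sign_sigma_S} must all be tracked in tandem to ensure that the nonintegral terms cancel after multiplying by $\pi$ and that the final combinations genuinely land in the prescribed eigenspace $W_\varepsilon$.
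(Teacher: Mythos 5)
Your proposal is correct and takes essentially the same route as the paper: you identify the same spanning elements $\pi\bigl(f_S + \varepsilon\sgn(\sigma_S)f_{S^\perp}\bigr)$ of $W(\Lambda_m)_\varepsilon$ (your $U_i$, $S_i$ are the paper's $S_1$, $S_2$), compute their images modulo $\pi$ in terms of the $e_S$-basis, and invert the resulting $2\times 2$ system. The only cosmetic difference is that the paper lets $T$ depend on $\varepsilon$ rather than treating $T$ and $T'$ separately, and it runs the index $i$ over $\{1,\dotsc,m\}$ rather than over the full symmetric range, recovering the remaining cases via $S_2^\perp$.
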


\begin{proof}
In the notation of proof of Lemma \ref{st:spin=>X_4=0}, the image of $\pi f_T$ in $N_\varepsilon$ is of the form
\[
   (\text{unit})\cdot e_{\{n+1,\dotsc,2n\}},
\]
which takes care of the first element on our list.

For $1 \leq i \leq m$, let
\[
   S_1 := \{i,m+1,n+1,\dotsc,2n\}\smallsetminus\{n+i,n+m+1\}
\]
and
\[
   S_2 := \{i,n+1,\dotsc,2n\} \smallsetminus \{n+i\}.
\]
Then
\[
   S_1^\perp = \{m+1,i^\vee,1,\dotsc,2n\} \smallsetminus \{n+m+1,n+i^\vee\}
\]
and
\[
   S_2^\perp = \{i^\vee,1\dotsc,2n\} \smallsetminus \{n+i^\vee\}.
\]
By Lemma \ref{st:sign_sigma_S},
\[
   \sgn(\sigma_{S_1}) = -1 \quad\text{and}\quad \sgn(\sigma_{S_2}) = 1.
\]
Hence
\[
\begin{split}
   \pi(f_{S_1} - \varepsilon f_{S_1^\perp}) &= (- \pi^{-1} e_i \otimes 1) \wedge (e_{m+1} \otimes \pi - \pi e_{m+1} \otimes 1)\\
   &\qquad\qquad \wedge f_{n+1} \wedge \dotsb \wedge \wh f_{n+i} \wedge \dotsb \wedge \wh f_{n+m+1} \wedge \dotsb \wedge f_{2n}\\
   &\qquad -\varepsilon (e_{m+1} \otimes \pi - \pi e_{m+1} \otimes 1)\\
   &\qquad\qquad\qquad \wedge f_{i^\vee} \wedge f_{n+1} \wedge \dotsb \wedge \wh f_{n+m+1} \wedge \dotsb \wedge \wh f_{n+i^\vee} \wedge \dotsb \wedge f_{2n}\\
   &\in W(\Lambda_m)_\varepsilon.
\end{split}
\]
This equals
\[
   - \pi e_{S_1} + (-1)^{m-1}e_{S_2} - \varepsilon \pi e_{S_1^\perp} + \varepsilon (-1)^{m+1} e_{S_2^\perp},
\]
which, since $\pi R = 0$, has image
\begin{equation}\label{disp:e_comb1}
	(-1)^{m-1}(e_{S_2} + \varepsilon e_{S_2^\perp})
\end{equation}
in $N_\varepsilon$.  Similarly,
\[
\begin{split}
   \pi(f_{S_2} + \varepsilon f_{S_2^\perp}) = {}&(- \pi^{-1} e_i \otimes 1) \wedge f_{n+1} \wedge \dotsb \wedge \wh f_{n+i} \wedge \dotsb \wedge f_{n+m}\\ 
   &\qquad\qquad\wedge \frac{e_{m+1} \otimes \pi + \pi e_{m+1} \otimes 1}2 \wedge f_{n+m+2} \wedge \dotsb \wedge f_{2n}\\
   &\qquad +\varepsilon f_{i^\vee} \wedge f_{n+1} \wedge \dotsb \wedge f_{n+m}\\
   &\qquad\qquad\qquad \wedge  \frac{e_{m+1} \otimes \pi + \pi e_{m+1} \otimes 1}2\\
   &\qquad\qquad\qquad\qquad\wedge f_{n+m+2} \wedge \dotsb \wh f_{n+i^\vee} \wedge \dotsb \wedge f_{2n}
\end{split}
\]
is in $W(\Lambda_m)_\varepsilon$ and has image
\begin{equation}\label{disp:e_comb2}
   -\frac 1 2 (-e_{S_2} + \varepsilon e_{S_2^\perp})
\end{equation}
in $N_\varepsilon$.  Plainly $e_{S_2}$ and $e_{S_2^\perp}$ are in the $k$-span of \eqref{disp:e_comb1} and \eqref{disp:e_comb2}, which proves the rest of the lemma.
\end{proof}

Now let $R$ be a $k$-algebra with a nonzero element $x$ such that $x^2 = 0$.  Suppose that $n \geq 5$, and take
\[
   X_1 = \diag(x,-x,0,\dotsc,0,-x,x) \quad\text{and}\quad X_3 = 0.
\]
Then $X_1$ and $X_3$ satisfy \eqref{disp:naive_eqns}, \eqref{disp:wedge_eqns}, and \eqref{disp:Tr_cond}.  Plugging them into \eqref{disp:U^Gr}, and taking $X_2$ and $X_4$ both $0$, the wedge of the columns of \eqref{disp:U^Gr} translates to a linear combination of the basis elements
\[
   e_{\{n+1,\dotsc,2n\}} \quad\text{and}\quad
   e_{\{i,n+1,\dotsc,\wh{n+i},\dotsc,2n\}} \quad\text{for}\quad i \in \{m-1,m,m+2,m+3\}.
\]
By Lemma \ref{st:N_lem}, such a linear combination is contained in $N_\varepsilon$ for any $\varepsilon$.
We conclude that $X_1$ and $X_3$ determine an $R$-point on $U^\spin$.  But in the case of signature $(n-1,1)$, any point on $U^\loc$ with $X_3 = 0$ must have $X_1 = 0$, via \eqref{disp:U^loc_isom}.  This exhibits that $U^\spin \neq U^\loc$.

\subsection{Flatness of $M$}\label{ss:reduction}

In this subsection we reduce Theorem \ref{st:main_thm}, which asserts that the scheme $M$ is flat over $\Spec \O_F$, to Proposition \ref{st:X_1=0} below, which we will subsequently prove in \s\ref{s:proof}.

Our goal is to show that the closed immersion $M^\loc \subset M$ is an equality.  Since this is an equality
between generic fibers
and $M^\loc$ is flat over $\O_F$, it suffices to show that it is also an equality 
between special fibers.  
For this we may assume that $k$ is algebraically closed.  

Consider the closed immersions
\[
   M_k^\loc \subset M_k \subset M_k^\spin \subset M_k^\wedge.
\]
As discussed at the beginning of \s\ref{ss:arzdorf_calcs}, these schemes all embed into an affine flag variety, where they topologically decompose into a union of Schubert cells.  By Richarz's result \cite{arzdorf09}*{Prop.\ 4.16} in the situation at hand or by \cite{sm11d}*{Main Th.}\ in general for odd $n$, the Schubert cells occurring in them are all the same.  In the present situation, there are just two Schubert cells that occur, the ``worst point'' and its complement $C$ in these schemes, as follows from \cite{sm11d}*{Cor.\ 5.6.2} and the calculation of the relevant admissible set in \cite{paprap09}*{\s2.4.2}.  Arzdorf shows that $M_k^\wedge$ contains an open reduced subscheme in \cite{arzdorf09}*{Prop.\ 3.2}.  Therefore the entire open cell $C$ must be reduced in $M_k^\wedge$, which implies the same for all of the schemes in the display.

To complete the proof that $M_k^\loc = M_k$, it remains to show that the local rings of these schemes at the worst point coincide.  Restricting to the affine charts from \s\ref{ss:arzdorf_calcs}, we have
\[
   U^\loc \subset U \to \AA_k^{n-1},
\]
where the second map is $\bigl(\begin{smallmatrix}X_1 \\ X_3\end{smallmatrix}\bigr) \mapsto X_3$ and the composite is the isomorphism \eqref{disp:U^loc_isom}.  The worst point is the point over $0 \in \AA_k^{n-1}$.  It is an easy consequence of the first condition in \eqref{disp:naive_eqns} and the wedge condition \eqref{disp:wedge_eqns} that $U^\wedge$, and a fortiori $U$, is finite over $\AA_k^{n-1}$.  By Nakayama's lemma, it therefore suffices to show that the fiber in $U$ over $0$ is just $\Spec k$.  In other words, we have reduced Theorem \ref{st:main_thm} to the following.

\begin{prop}\label{st:X_1=0}
Let $R$ be a $k$-algebra, and suppose that we have an $R$-point on $U$ such that $X_3 = 0$.  Then $X_1 = 0$.
\end{prop}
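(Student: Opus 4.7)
The plan is to write $\Omega := \bigwedge_R^n \mathcal F_m$ explicitly in the $e_S$-basis of $W(\Lambda_m) \otimes R$ and then show that the condition $\Omega \in L_m^{n-1,1}(R)$ forces every entry of $X_1$ to vanish.

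First I would compute $\Omega$ explicitly. Starting from Arzdorf's setup, $X_3 = 0$ (our hypothesis) together with $X_2 = X_4 = 0$ on $U^\naive$ from \s\ref{ss:arzdorf_calcs} mean that $\mathcal F_m$ is generated by the columns of $\binom{X}{I_n}$ with $X$ supported in its top-left $(n-1) \times (n-1)$ block as $X_1$. Expanding the wedge of these columns and using the wedge condition $\bigwedge^2 X_1 = 0$ to annihilate any term involving two or more entries of $X_1$, one obtains
\[
   \Omega = \alpha_0 \, e_{S_0} + \sum_{i, k = 1}^{n-1} \alpha_{i,k} (X_1)_{ik} \, e_{S_{i,k}},
\]
where $S_0 := \{n+1, \dotsc, 2n\}$, $S_{i,k} := \{i\} \cup S_0 \setminus \{n+k\}$, and $\alpha_0, \alpha_{i,k} \in \{\pm 1\}$ are explicit signs.

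Second I would pass to the decomposition $V = V_\pi \oplus V_{-\pi}$ into eigenspaces of $\pi \otimes 1$ and the resulting decomposition $W = \bigoplus_{r' + s' = n}W^{r',s'}$. Picking the bases $u_j^\pm := e_j \otimes 1 \pm \pi e_j \otimes \pi^{-1}$ of $V_{\pm\pi}$, compute the components $(e_S)^{r',s'}$ for the sets $S$ appearing in the expansion of $\Omega$. A short combinatorial argument based on which of the pairs $\{j, n+j\}$ are hit by $S$ shows, for example, that $(e_{S_{i,k}})^{n,0}$ vanishes precisely when $i \neq k$, while for $i = k$ and for $S_0$ it is a specific $\pi$-power multiple of $u_1^- \wedge \dotsb \wedge u_n^-$; analogous patterns govern the other $(r', s')$.

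Third I would use the condition $\Omega \in L_m^{n-1,1}(R)$ to extract relations. Any $\eta \in W(\Lambda_m)^{n-1,1}_{-1}$ projects to zero in every $W^{r',s'}$ with $(r',s') \neq (n-1,1)$, so each such projection descends (after suitable $\mathcal O_F$-normalization so that it maps $W(\Lambda_m)$ into an $\mathcal O_F$-lattice) to an $R$-linear map on $W(\Lambda_m) \otimes R$ vanishing on $L_m^{n-1,1}(R)$. Applying enough of these to $\Omega$ and matching coefficients against the computed $u_j^\pm$-expansions of the $e_{S_{i,k}}$ should yield a linear system in the $(X_1)_{ik}$ whose only solution is $X_1 = 0$; the diagonal entries may additionally need the trace relation $\tr X_1 = 0$ (already implicit via the Kottwitz condition \eqref{disp:Tr_cond}) in combination with the $W_{-1}$ part of the new condition.

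The main obstacle is the careful tracking of $\pi$-valuations: since $\pi = 0$ in the $k$-algebra $R$, most of the obvious $\mathcal O_F$-linear functionals on $W(\Lambda_m)$ vanishing on $W(\Lambda_m)^{n-1,1}_{-1}$ factor through $\pi \mathcal O_F$ and become trivial after tensoring with $R$. The nontrivial task is to locate enough functionals whose image on each $e_{S_{i,k}}$ is a $\pi$-unit, and to verify that the resulting family of constraints suffices to pin down all entries of $X_1$ modulo $\pi$. The Arzdorf equations $X_1 + JX_1^\Rt J = 0$ and $X_1^2 = 0$ (which simplify considerably when $X_3 = 0$) should play a role in cutting down the final system to something manageable.
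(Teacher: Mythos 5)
Your plan — expand $\Omega = \bigwedge^n_R \F_m$ in the $e_S$-basis and then impose $\Omega \in L_m^{n-1,1}(R)$ — is the same framework the paper uses, and your preliminary observation that, after killing higher-order terms, the coefficients of $e_{S_0}$ and the various $e_{S_{i,k}}$ carry the entries of $X_1$ is correct. But the proposal stops short of the actual mathematical content of the proposition: you acknowledge in your final paragraph that you have not determined which $\O_F$-linear functionals on $W(\Lambda_m)$ annihilating $W(\Lambda_m)_{-1}^{n-1,1}$ survive after $\otimes_{\O_F} R$ (equivalently, you have not computed $L_m^{n-1,1}(R)$), and you describe the output as a linear system that ``should yield $X_1 = 0$.'' That is precisely the step that needs to be proved. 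The paper's proof is, essentially, the resolution of your ``main obstacle'': Lemma \ref{st:WT(g_S_pm_g_S^perp)} and Proposition \ref{st:W(Lambda_m)_coeff_conds} produce an explicit $\O_F$-basis of $W(\Lambda_m)_{-1}^{n-1,1}$ by tracking the lowest $\pi$-valuation (``worst'') terms of the vectors $g_S - \sgn(\sigma_S)g_{S^\perp}$ in the $e_S$-coordinates, where $g_1,\dotsc,g_{2n}$ is a split basis adapted to the eigenspace decomposition $V = V_{-\pi} \oplus V_\pi$; Corollary \ref{st:im_basis} then reads off an $R$-basis of $L_m^{n-1,1}(R)$. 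Only with that explicit description can one match coefficients of $\Omega$ and extract the relations $D^\ad = -A$, $B^\ad = B$, $C^\ad = C$, which combine with the constraints $D^\ad = A$, $B^\ad = -B$, $C^\ad = -C$ coming from $X_1 = -JX_1^\Rt J$ to force $X_1 = 0$ (using $\charac k \neq 2$). Without Corollary \ref{st:im_basis} or an equivalent, your step three is an unverified assertion.

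Two smaller inaccuracies. First, you invoke the wedge condition to simplify $\Omega$ and suggest the trace relation $\tr X_1 = 0$ may be needed for the diagonal entries; in fact neither is required. The sets $S$ relevant to condition \eqref{it:new_cond} with signature $(n-1,1)$ have type $(1,n-1)$ or $(0,n)$, and for such $S$ the coefficient of $e_S$ in the wedge product of the columns of \eqref{disp:U^Gr} is automatically at most linear in the entries of $X_1$; no wedge condition is needed to kill higher-order terms, and the diagonal entries are handled exactly like the off-diagonal ones by the relation $D^\ad = -A$. Second, your identification of the index sets $S_{i,k}$ needs care: Arzdorf's basis \eqref{disp:Arzdorf_basis} and the ordered basis used to define $e_S$ in \eqref{disp:e_S} list the same vectors in different orders, so the passage from the $(i,j)$ index of $X_1$ to the pair $(i,k)$ labeling $S_{i,k}$ is a nontrivial permutation — this is why the paper decomposes $X_1$ into the blocks $A,B,C,D$ and treats the resulting four ranges of indices separately.
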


\section{Proof of Proposition \ref{st:X_1=0}}\label{s:proof}

In this section we prove Proposition \ref{st:X_1=0}.  We continue with the notation and assumptions of \s\ref{s:special_case}.  
Our strategy is essentially one of computation:  
the main point is to find an explicit $\O_F$-basis for $W(\Lambda_m)_{-1}^{n-1,1}$ (Proposition \ref{st:W(Lambda_m)_coeff_conds}), from which we obtain an explicit $R$-basis for $L_m^{n-1,1}(R)$ whenever $\pi R = 0$ (Corollary \ref{st:im_basis}).

\subsection{Another basis for $V$}  Let
\[
   g_1,\dotsc,g_{2n}
\]
denote the ordered $F$-basis
\begin{multline*}
   e_1 \otimes 1 - \pi e_1 \otimes \pi^{-1},\dotsc, e_n \otimes 1 - \pi e_n \otimes \pi^{-1},\\
   \frac{e_1 \otimes 1 + \pi e_1 \otimes \pi^{-1}} 2, \dotsc, \frac{e_n \otimes 1 + \pi e_n \otimes \pi^{-1}} 2
\end{multline*}
for $V$, which is a split ordered basis for \sform.  Moreover $g_1,\dotsc,g_n$ is a basis for $V_{-\pi}$ and $g_{n+1},\dotsc,g_{2n}$ is a basis for $V_\pi$, which makes $g_1,\dotsc,g_{2n}$ better suited to work with condition \eqref{it:new_cond} than the split basis $f_1,\dotsc,f_{2n}$ in \eqref{disp:PR_basis}.

It is straightforward to see that the change-of-basis matrix expressing the $g_i$'s in terms of the $f_i$'s is contained in $SO_{2n}(F)$, either by explicitly writing out this matrix and computing its determinant, or by noting that the intersection
\[
   \spann\{f_1,\dotsc,f_n\} \cap \spann\{g_1,\dotsc,g_n\} = \spann\{g_{m+1}\}
\]
has even codimension $n-1$ in $\spann\{f_1,\dotsc,f_n\}$ and in $\spann\{g_1,\dotsc,g_n\}$, and then appealing to \cite{paprap09}*{\s7.1.4}.  As discussed in \s\ref{ss:wedge_spin_conds}, this implies that
\[
   W_{\pm 1} = \spann_F\bigl\{\,g_S \pm \sgn(\sigma_S) g_{S^\perp}\bigm| \#S=n\,\bigr\},
\]
where
\[
   g_S \in W
\]
is defined with respect to the basis $g_1,\dotsc,g_{2n}$ as in \eqref{disp:f_S}.

\subsection{Types}

To facilitate working with the subspace $W^{r,s} \subset W$, we make the following definition.

\begin{defn} We say that a subset $S \subset \{1,\dotsc,2n\}$ has \emph{type $(r,s)$} if
\[
   \#(S\cap \{1,\dotsc,n\}) = r
   \quad\text{and}\quad
   \#(S\cap\{n+1,\dotsc,2n\}) = s
\]
\end{defn}
For $r + s = n$, the $g_S$'s for varying $S$ of type $(r,s)$ form a basis for $W^{r,s}$, and it is easy to check that $S$ and $S^\perp$ have the same type.  Hence the following.

\begin{lem}\label{st:W_pm1^r,s_spanning_set}
$W_{\pm 1}^{r,s} = \spann_F\{\, g_S \pm \sgn(\sigma_S) g_{S^\perp} \mid S \text{ is of type } (r,s)\,\}$.\qed
\end{lem}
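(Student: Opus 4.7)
The plan is to exploit the type-homogeneity of the spanning set for $W_{\pm 1}$ with respect to the direct sum decomposition $W = \bigoplus_{r+s=n} W^{r,s}$. For each partition $r+s=n$, introduce the shorthand
\[
   U_\pm^{r,s} := \spann_F\bigl\{\, g_S \pm \sgn(\sigma_S) g_{S^\perp} \bigm| S \text{ of type } (r,s) \,\bigr\};
\]
the goal is to show $W_{\pm 1}^{r,s} = U_\pm^{r,s}$.

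For the inclusion $U_\pm^{r,s} \subseteq W_{\pm 1}^{r,s}$, I would just observe that each generator lies in $W_{\pm 1}$ by the displayed description of $W_{\pm 1}$, and, because $S$ and $S^\perp$ share the same type (the remark recorded just before the lemma statement), both $g_S$ and $g_{S^\perp}$ belong to the basis of $W^{r,s}$. Hence each generator lies in $W_{\pm 1} \cap W^{r,s} = W_{\pm 1}^{r,s}$.

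For the reverse inclusion, the key point is that the full spanning set of $W_{\pm 1}$ partitions cleanly along types: every generator $g_S \pm \sgn(\sigma_S) g_{S^\perp}$ is supported in a single $W^{r,s}$, namely the one matching the type of $S$. Summing over all types therefore yields
\[
   W_{\pm 1} = \sum_{r+s=n} U_\pm^{r,s},
\]
and, because $U_\pm^{r,s} \subseteq W^{r,s}$ while the $W^{r,s}$ sum directly in $W$, this sum is in fact direct. On the other hand, intersecting the direct sum $W = \bigoplus_{r+s=n} W^{r,s}$ with $W_{\pm 1}$ gives $W_{\pm 1} = \bigoplus_{r+s=n} W_{\pm 1}^{r,s}$. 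Combining this decomposition with the termwise inclusions $U_\pm^{r,s} \subseteq W_{\pm 1}^{r,s}$ already established forces all these inclusions to be equalities.

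I do not expect any serious obstacle: the argument is a purely formal manipulation of direct sums, whose only substantive input is the already-noted fact that $S$ and $S^\perp$ share the same type, which ensures that the spanning set of $W_{\pm 1}$ is homogeneous for the type decomposition. In particular, the precise values of $\sgn(\sigma_S)$ play no role here.
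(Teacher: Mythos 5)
Your argument is correct and is exactly the reasoning the paper has in mind: the paper treats the lemma as immediate (it is stated with a bare \qed) after observing in the preceding paragraph that the $g_S$'s of type $(r,s)$ form a basis for $W^{r,s}$ and that $S$ and $S^\perp$ share the same type. Your proposal simply spells out the resulting formal direct-sum bookkeeping.
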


\begin{rk}\label{rk:sign_sigma_S_type_(n-1,1)}
In proving Proposition \ref{st:X_1=0} we will be interested in $S$ of type $(n-1,1)$.  Such an $S$ is of the form
\[
   S = \bigl\{1,\dotsc, \wh j, \dotsc, n, n+i\bigr\}
\]
for some $i$, $j \leq n$.  By Lemma \ref{st:sign_sigma_S},
\[
   \sgn(\sigma_S) = (-1)^{m+1+\Sigma S} = (-1)^{m+1 + \frac{n(n+1)} 2 -j + n+i} = (-1)^{i + j + 1}.
\]
\end{rk}

\subsection{Weights}

To determine a basis for $W(\Lambda_m)_{-1}^{n-1,1}$, we will need to answer the question of when a linear combination of elements of the form $g_S - \sgn(\sigma_S) g_{S^\perp}$ is contained in $W(\Lambda_m)$.  
The following will help with the bookkeeping.

\begin{defn}
Let $S \subset \{1,\dotsc,2n\}$.  The \emph{weight vector $\mathbf{w}_S$} attached to $S$ is the element of $\ZZ^n$ whose $i$th entry is $\#(S \cap \{i,n+i\})$.
\end{defn}

\begin{rk}\label{rk:(n-1,1)_possible weights}
If $S$ is of type $(n-1,1)$, then there are two possibilities.  The first is that there are $i$ and $j$ such that the $i$th entry of $\mathbf{w}_S$ is $2$, the $j$th entry of $\mathbf{w}_S$ is $0$, and all the other entries of $\mathbf{w}_S$ are $1$.  In this case $S = \{1,\dotsc, \wh j,\dotsc, n, n+i\}$ is uniquely determined by its weight.  The other possibility is that $\mathbf{w}_S = (1,\dotsc,1)$.  In this case all we can say is that $S = \{1,\dotsc, \wh i, \dotsc, n, n+ i\}$ for some $i \in \{1,\dotsc, n\}$.
\end{rk}

\begin{rk}
Of course $\mathbf w_S \in \{0,1,2\}^n$, and $S$ and $S^\perp$ may or may not have the same weight.  For any $S$ of cardinality $n$,
\[
   \mathbf{w}_{S^\perp} + \mathbf{w}_S^\vee = (2,\dotsc,2),
\]
where $\mathbf{w}_S^\vee$ is the vector whose $i$th entry is the $i^\vee$th entry of $\mathbf{w}_S$ for all $i$.
\end{rk}

The reason for introducing the notion of weight is the following obvious fact.

\begin{lem}\label{st:weight_lem}
For $S$ of cardinality $n$, write
\[
   g_S = \sum_{S'}c_{S'}e_{S'},\quad c_{S'} \in F.
\]
Then every $S'$ for which $c_{S'} \neq 0$ has the same weight as $S$.\qed
\end{lem}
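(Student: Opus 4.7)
The plan is to prove the lemma by directly expanding the wedge $g_S = g_{i_1} \wedge \dotsb \wedge g_{i_n}$ into the $e$-basis and observing that every resulting term has the same weight vector as $S$. The heart of the matter is the observation that each individual $g_i$ lives in just two of the distinguished $F$-coordinates of $V$ — precisely the two that feed into the $j$th entry of the weight vector when $i \equiv j \pmod n$.

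Let $v_1, \dotsc, v_{2n}$ denote the ordered $\O_{F_0}$-basis of $\Lambda_m$ whose induced basis for $V$ is used to form $e_S$ in \eqref{disp:e_S}. The first step is to express each $g_i$ in terms of the $v$'s, using the definition of the $g$-basis recalled in the previous subsection together with the fact that $\pi_0 = \pi^2 \in F_0$ slides across the tensor product. A direct computation gives, for $j \in \{1,\dotsc,m\}$,
\[
   g_j = v_{n+j} - \pi v_j, \qquad g_{n+j} = \tfrac{1}{2}(v_{n+j} + \pi v_j),
\]
and, for $j \in \{m+1,\dotsc,n\}$,
\[
   g_j = v_j - \pi^{-1} v_{n+j}, \qquad g_{n+j} = \tfrac{1}{2}(v_j + \pi^{-1} v_{n+j}).
\]
The crucial feature is that in every case $g_i$ lies in the $F$-span of just the pair $\{v_{i'}, v_{i'+n}\}$ (with $1 \le i' \le n$) attached to a single standard coordinate of $F^n$.

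Next, I would expand $g_{i_1} \wedge \dotsb \wedge g_{i_n}$ multilinearly. Each monomial that appears is an $F$-scalar multiple of $v_{a_1} \wedge \dotsb \wedge v_{a_n}$, with $a_k$ drawn from the two-element pair corresponding to $i_k$. A nonzero such monomial corresponds to an $n$-subset $S' = \{a_1,\dotsc,a_n\}$, and by construction, for every $j \in \{1,\dotsc,n\}$,
\[
   \#(S' \cap \{j, n+j\}) = \#\{k : a_k \in \{j, n+j\}\} = \#\{k : i_k \in \{j, n+j\}\} = \#(S \cap \{j, n+j\}).
\]
This gives $\mathbf{w}_{S'} = \mathbf{w}_S$, which is exactly the assertion of the lemma.

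There is essentially no obstacle: the whole argument reduces to the per-coordinate support property of the $g$-basis, which is immediate from the defining formulas for $g_i$ and the shape of the chosen $\O_{F_0}$-basis for $\Lambda_m$.
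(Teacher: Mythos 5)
Your proof is correct: the paper marks this lemma with \qed and explicitly calls it an ``obvious fact,'' omitting the argument, and your elaboration—that each $g_i$ lies in the $F$-span of exactly the pair $\{v_j, v_{n+j}\}$ with $j \equiv i \pmod n$, so every nonzero term in the multilinear expansion of $g_{i_1} \wedge \dotsb \wedge g_{i_n}$ selects one element from each pair and hence preserves the weight vector—is precisely the intended reasoning. The change-of-basis formulas you record for $g_j$ and $g_{n+j}$ in the two ranges $j \le m$ and $j > m$ are all correct.
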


By contrast, many different \emph{types} of $S'$ occur in the linear combination in the display.

\subsection{Worst terms}

Let $A$ be a finite-dimensional $F$-vector space, and let $\B$ be an $F$-basis for $A$.

\begin{defn} Let $x = \sum_{b\in \B} c_b b \in A$, with $c_b \in F$.  We say that $c_b b$ is a \emph{worst term for $x$} if
\[
   \ord_\pi (c_b) \leq \ord_\pi (c_{b'}) \quad\text{for all}\quad b' \in \B.
\]
We define
\[
   \WT_{\B}(x) := \sum_{\substack{b \in \B\\ c_b b \text{ is a worst}\\ \text{term for } x}} c_b b.
\]
\end{defn}

Let $\Lambda$ denote the $\O_F$-span of \B in $A$.  Trivially, a nonzero element $x \in A$ is contained in $\Lambda$ if and only if one, hence any, of its worst terms is.  When this is so, and when $R$ is a $k$-algebra, the image of $x$ under the map
\[
   \Lambda \to \Lambda \otimes_{\O_F} R
\]
is the same as the image of $\WT_\B(x)$.

For the rest of the paper we specialize to the case that $A = W$ and \B is the $e_S$-basis for $W$ from \eqref{disp:e_S}.
We abbreviate $\WT_\B$ to $\WT$.  For any $S$ of cardinality $n$, the vector $g_S$ has a unique worst term.  When $S$ has weight $(1,\dotsc,1)$, this is
\begin{multline*}
   (e_{i_1} \otimes 1) \wedge \dotsb \wedge (e_{i_\alpha}\otimes 1) \wedge (-\pi e_{j_1} \otimes \pi^{-1}) \wedge \dotsb \wedge (-\pi e_{j_\beta} \otimes \pi^{-1})\\ \wedge \frac {e_{k_1-n} \otimes 1}2 \wedge \dotsb \wedge \frac {e_{k_\gamma-n} \otimes 1}2 \wedge \frac{\pi e_{l_1-n} \otimes \pi^{-1}}2 \wedge \dotsb \wedge \frac{\pi e_{l_\delta-n} \otimes \pi^{-1}} 2,
\end{multline*}
where we write
\[
   S = \{i_1,\dotsc,i_\alpha, j_1,\dotsc,j_\beta, k_1\dotsc,k_\gamma, l_1,\dotsc,l_\delta\}
\]
with
\begin{multline*}
   i_1 < \dotsb < i_\alpha < m+1 \leq j_1 < \dotsb < j_\beta \leq n\\ 
   < k_1 < \dotsb < k_\gamma < n + m + 1 \leq l_1 < \dotsb < l_\delta.
\end{multline*}
For $S$ of other weight, the worst term of $g_S$ can be made similarly explicit, using the easy fact that
\[
   g_i \wedge g_{n+i} = (e_i \otimes 1) \wedge (\pi e_i \otimes \pi^{-1})
\]
to handle all pairs of the form $i$, $n+i$ that occur in $S$.
Here is
the worst term of $g_S$ in all cases of type $(n-1,1)$.

\begin{lem}\label{st:WT(g_S)} \hfill
\begin{enumerate}
\renewcommand{\theenumi}{\roman{enumi}}
\item
if $S = \{1,\dotsc,\wh i,\dotsc,n,n+i\}$ for some $i < m+1$, then
\[
   \WT(g_S) = \frac {(-1)^{i+m}}2 \pi^{-(m+1)} e_{\{n+1,\dotsc,2n\}}.
\]
\item
if $S = \{1,\dotsc,\wh i,\dotsc,n,n+i\}$ for some $i \geq m+1$, then
\[
   \WT(g_S) = \frac {(-1)^{i+m+1}}2 \pi^{-(m+1)} e_{\{n+1,\dotsc,2n\}}.
\]
\item
if $S = \{1,\dotsc,\wh j,\dotsc,n,n+i\}$ for some $i, j <m+1$ with $i \neq j$, then 
\[
   \WT(g_S) = (-1)^{m+1} \pi^{-m}e_{\{i,n+1,\dotsc, \wh{n+j},\dotsc,2n\}}.
\]
\item
if $S = \{1,\dotsc,\wh j,\dotsc,n,n+i\}$ for some $i < m+1 \leq j$, then 
\[
   \WT(g_S) = (-1)^m \pi^{-(m-1)} e_{\{i,n+1,\dotsc, \wh{n+j},\dotsc,2n\}}.
\]
\item
if $S = \{1,\dotsc,\wh j,\dotsc,n,n+i\}$ for some $j < m+1 \leq i$, then 
\[
   \WT(g_S) = (-1)^{m+1}\pi^{-(m+1)}e_{\{i,n+1,\dotsc, \wh{n+j},\dotsc,2n\}}.
\]
\item
if  $S = \{1,\dotsc,\wh j,\dotsc,n,n+i\}$ for some $i,j \geq m+1$ with $i \neq j$, then
\begin{flalign*}
   \phantom{\qed}& & \WT(g_S) = (-1)^m \pi^{-m} e_{\{i,n+1,\dotsc, \wh{n+j},\dotsc,2n\}}. & & \qed
\end{flalign*}
\end{enumerate}
\end{lem}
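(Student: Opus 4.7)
My plan is a direct computation, built on expressing each vector $g_k, g_{n+k}$ in terms of the ordered $\O_F$-basis $b_1, \dotsc, b_{2n}$ of $\Lambda_m \otimes_{\O_{F_0}} \O_F$ whose wedges give the $e_S$'s. Using $\pi^2 = \pi_0 \in \O_{F_0}$ to transfer scalars across $\otimes_{F_0}$, one verifies directly that
\[
   g_k = \begin{cases} b_{n+k} - \pi \, b_k, & k \leq m,\\ b_k - \pi^{-1} b_{n+k}, & k \geq m+1, \end{cases}
   \quad
   g_{n+k} = \begin{cases} \tfrac{1}{2}(b_{n+k} + \pi \, b_k), & k \leq m,\\ \tfrac{1}{2}(b_k + \pi^{-1} b_{n+k}), & k \geq m+1. \end{cases}
\]

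For cases (i) and (ii), $S$ has weight $(1, \dotsc, 1)$, so no two factors in $g_S$ compete for the same $b$-index; thus the worst term is just the wedge of the minimum-valuation sub-term from each factor. Each such sub-term involves $b_{n+k}$ for some $k$, so the resulting basis element is $e_{\{n+1, \dotsc, 2n\}}$ up to a permutation sign, which one tallies to recover the formulas. The $\pi$-valuation contributed by each factor is $-1$ when its index is $\geq m+1$ and $0$ otherwise, so the total valuation is $-(m+1)$, as claimed.

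For cases (iii)--(vi), $S$ contains both $i$ and $n+i$, and the factors $g_i, g_{n+i}$ must jointly supply $b_i$ and $b_{n+i}$. The key identity is
\[
   g_i \wedge g_{n+i} = \begin{cases} -\pi \, b_i \wedge b_{n+i}, & i \leq m,\\ \pi^{-1} b_i \wedge b_{n+i}, & i \geq m+1, \end{cases}
\]
immediate from the basis formulas. I would then move $g_{n+i}$ adjacent to $g_i$ in $g_S$ (with a permutation sign), substitute, and take minimum-valuation sub-terms for the remaining $n - 2$ factors as in the weight-$(1, \dotsc, 1)$ case. Combining coefficients and sorting the resulting $b$-indices into increasing order yields the asserted formulas.

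The main obstacle is strictly bookkeeping: the valuation of $g_i \wedge g_{n+i}$, the count of factors whose minimum sub-term has valuation $-1$, and the various permutation signs all depend on whether $i$ and $j$ lie below $m+1$ or above $m$, so each of (iii)--(vi) requires a separate tally. The odd parity $n = 2m + 1$ simplifies many of the signs and, in particular, ensures that within each case the answer is independent of whether $i < j$ or $i > j$.
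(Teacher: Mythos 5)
Your proposal is correct and follows essentially the same route the paper intends: express the $g_k$'s (equivalently, keep them in the form $e_k\otimes 1 \mp \pi e_k \otimes \pi^{-1}$), read off the unique minimum-valuation sub-term factor by factor when the weight is $(1,\dotsc,1)$, and in the remaining cases move $g_{n+i}$ next to $g_i$ and use the identity $g_i\wedge g_{n+i}=(e_i\otimes 1)\wedge(\pi e_i\otimes\pi^{-1})$, which is exactly your $b$-basis identity. One small quibble: the independence of the answer from the relative order of $i$ and $j$ is a consequence of the permutation-sign bookkeeping (the transposition count in moving $g_{n+i}$ adjacent to $g_i$ and then sorting the resulting $b$-indices cancels uniformly), not really of the parity of $n$; the odd parity is what pins the explicit exponents $m$, $m\pm 1$ and overall signs $(-1)^{m}$, $(-1)^{m+1}$.
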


\subsection{The lattice $W(\Lambda_m)_{-1}^{n-1,1}$}

In this subsection we determine an $\O_F$-basis for $W(\Lambda_m)_{-1}^{n-1,1}$.  Let $S$ be of type $(n-1,1)$.  Then $S\cap\{n+1,\dotsc,2n\}$ consists of a single element $i_S$.  Define
\[
   S \preccurlyeq S^\perp
   \quad\text{if}\quad
   i_S \leq i_{S^\perp}.
\]
The elements $g_S - \sgn(\sigma_S) g_{S^\perp}$, for varying $S$ of type $(n-1,1)$ and such that $S \preccurlyeq S^\perp$, form a basis for $W_{-1}^{n-1,1}$.  (In particular, note that if $S$ is of type $(n-1,1)$ and $S = S^\perp$, then necessarily $\sgn(\sigma_S)  = -1$, by Remark \ref{rk:sign_sigma_S_type_(n-1,1)}.)  Our task is to determine when a linear combination of 
such elements
is contained in $W(\Lambda_m)$.

As a first step, we calculate the worst terms of $g_S - \sgn(\sigma_S) g_{S^\perp}$.  

\begin{lem}\label{st:WT(g_S_pm_g_S^perp)}
Let $S$ be of type $(n-1,1)$ with $S \preccurlyeq S^{\perp}$.  Then exactly one of the following nine situations holds.
\begin{enumerate}
\item\label{it:case1}
$S = \{1,\dotsc,\wh{m+1},\dotsc,n,n+m+1\}$, $S = S^\perp$, $\mathbf{w}_S = (1,\dotsc,1)$, and
\[
   \WT\bigl(g_S - \sgn(\sigma_S) g_{S^\perp}\bigr) = \WT(2g_S) = \pi^{-(m+1)}e_{\{n+1,\dotsc,2n\}}.
\]
\item\label{it:case2}
$S = \{1,\dotsc,\wh{i^\vee},\dotsc,n,n+i\}$ for some $i < m+1$, $S = S^\perp$, $\mathbf{w}_S \neq (1,\dotsc,1)$, and
\[
   \WT\bigl(g_S - \sgn(\sigma_S) g_{S^\perp}\bigr) = \WT(2g_S) = 2(-1)^{m}\pi^{-(m-1)}e_{\{i,n+1,\dotsc,\wh{i^*},\dotsc,2n\}}.
\]
\item\label{it:case3}
$S = \{1,\dotsc,\wh{i^\vee},\dotsc,n,n+i\}$ for some $i > m+1$, $S = S^\perp$, $\mathbf{w}_S \neq (1,\dotsc,1)$, and
\[
   \WT\bigl(g_S - \sgn(\sigma_S) g_{S^\perp}\bigr) = \WT(2g_S) = 2(-1)^{m+1}\pi^{-(m+1)} e_{\{i,n+1,\dotsc,\wh{i^*},\dotsc,2n\}}.
\]
\item\label{it:case4}
$S = \{1,\dotsc,\wh{i},\dotsc,n,n+i\}$ for some $i < m+1$, $S \neq S^\perp$,
\[
   \mathbf{w}_S = \mathbf{w}_{S^\perp} = (1,\dotsc,1),
\]
and
\[
   \WT\bigl(g_S - \sgn(\sigma_S) g_{S^\perp}\bigr) = (-1)^{m+1}\pi^{-m} \bigl(e_{\{i,n+1,\dotsc,\wh{n+i},\dotsc,2n\}} - e_{\{i^\vee,n+1,\dotsc,\wh{i^*},\dotsc,2n\}} \bigr).
\]
\item\label{it:case5}
$S = \{1,\dotsc,\wh j,\dotsc,n,n+i\}$ for some $i < j^\vee < m+1$; $S \neq S^\perp$; $\mathbf{w}_S$, $\mathbf{w}_{S^\perp}$, and $(1,\dotsc,1)$ are pairwise distinct; and
\begin{multline*}
   \WT\bigl(g_S - \sgn(\sigma_S) g_{S^\perp}\bigr) =\\
      (-1)^m\pi^{-(m-1)}\bigl(e_{\{i,n+1,\dotsc,\wh{n+j},\dotsc,2n\}} + (-1)^{i+j}e_{\{j^\vee,n+1,\dotsc, \wh{i^*},\dotsc,2n\}}\bigr).
\end{multline*}
\item\label{it:case6}
$S = \{1,\dotsc,\wh{m+1},\dotsc,n,n+i\}$ for some $i < m+1$, $S \neq S^\perp$; $\mathbf{w}_S$, $\mathbf{w}_{S^\perp}$, and $(1,\dotsc,1)$ are pairwise distinct; and
\[
   \WT\bigl(g_S - \sgn(\sigma_S) g_{S^\perp}\bigr) = (-1)^{i+1} \pi^{-m} e_{\{m+1,n+1,\dotsc,\wh{i^*},\dotsc,2n\}}.
\]
\item\label{it:case7}
$S = \{1,\dotsc,\wh j,\dotsc,n,n+i\}$ for some $i < m + 1 < j^\vee$; $S \neq S^\perp$; $\mathbf{w}_S$, $\mathbf{w}_{S^\perp}$, and $(1,\dotsc,1)$ are pairwise distinct; and
\begin{multline*}
   \WT\bigl(g_S - \sgn(\sigma_S) g_{S^\perp}\bigr) =\\
      (-1)^{m+1} \pi^{-m} \bigl( e_{\{i,n\dotsc,\wh{n+j},\dotsc,2n \}} + (-1)^{i+j+1}e_{\{j^\vee,n+1,\dotsc, \wh{i^*},\dotsc,2n\}} \bigr).
\end{multline*}
\item\label{it:case8}
$S = \{1,\dotsc,\wh j,\dotsc,n,n+m+1\}$ for some $m + 1 < j^\vee$; $S \neq S^\perp$; $\mathbf{w}_S$, $\mathbf{w}_{S^\perp}$, and $(1,\dotsc,1)$ are pairwise distinct; and
\[
   \WT\bigl(g_S - \sgn(\sigma_S) g_{S^\perp}\bigr) = (-1)^{m+1} \pi^{-(m+1)}e_{\{m+1,n,\dotsc,\wh{n+j},\dotsc,2n\}}.
\]
\item\label{it:case9}
$S = \{1,\dotsc,\wh j,\dotsc,n,n+i\}$ for some $m+1 < i < j^\vee$; $S \neq S^\perp$; $\mathbf{w}_S$, $\mathbf{w}_{S^\perp}$, and $(1,\dotsc,1)$ are pairwise distinct; and
\begin{multline*}
   \WT\bigl(g_S - \sgn(\sigma_S) g_{S^\perp}\bigr) =\\
   (-1)^{m+1}\pi^{-(m+1)}\bigl( e_{\{i,n+1,\dotsc,\wh{n+j},\dotsc,2n\}} + (-1)^{i+j}e_{\{j^\vee,n+1,\dotsc, \wh{i^*},\dotsc,2n\}}\bigr).
\end{multline*}
\end{enumerate}
\end{lem}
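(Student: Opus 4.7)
The proof proceeds by case analysis on the structure of $S$. First, I would verify that every $S$ of type $(n-1,1)$ with $S \preccurlyeq S^\perp$ falls into exactly one of the nine listed cases. Writing $S = \{1,\dotsc,\wh j,\dotsc,n,n+i\}$, a direct calculation gives $S^\perp = \{1,\dotsc,\wh{i^\vee},\dotsc,n,n+j^\vee\}$, so $S = S^\perp$ is equivalent to $i+j = n+1$; this yields cases (\ref{it:case1})--(\ref{it:case3}), subdivided by $i = j = m+1$, $i < m+1$, $i > m+1$. The non-self-perp case $i = j$, which forces $\mathbf{w}_S = \mathbf{w}_{S^\perp} = (1,\dotsc,1)$ and, via $\preccurlyeq$, $i < m+1$, is case (\ref{it:case4}); the non-self-perp case $i \neq j$ breaks into (\ref{it:case5})--(\ref{it:case9}) according to the positions of $i$ and $j$ relative to $m+1$, after noting that $S \preccurlyeq S^\perp$ together with $S \neq S^\perp$ is equivalent to $i < j^\vee$.

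For the self-perp cases, Remark \ref{rk:sign_sigma_S_type_(n-1,1)} gives $\sgn(\sigma_S) = (-1)^{i+j+1} = (-1)^{n+2} = -1$, so $g_S - \sgn(\sigma_S) g_{S^\perp} = 2 g_S$ and Lemma \ref{st:WT(g_S)} delivers the asserted worst term on the nose. For cases (\ref{it:case5})--(\ref{it:case9}), the weights $\mathbf{w}_S$ and $\mathbf{w}_{S^\perp}$ are distinct, so by Lemma \ref{st:weight_lem} the basis elements appearing in $\WT(g_S)$ and $\WT(g_{S^\perp})$ are different and cannot cancel each other. The worst term of $g_S - \sgn(\sigma_S) g_{S^\perp}$ is then read off as the combination of those contributions of smallest $\pi$-order: both survive when the orders from Lemma \ref{st:WT(g_S)} coincide (cases (\ref{it:case5}), (\ref{it:case7}), (\ref{it:case9})), and only the more negative one survives otherwise (cases (\ref{it:case6}), (\ref{it:case8})).

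Case (\ref{it:case4}) is the only one requiring additional work. Here $\mathbf{w}_S = \mathbf{w}_{S^\perp} = (1,\dotsc,1)$, and Lemma \ref{st:WT(g_S)} delivers $\WT(g_S)$ and $\WT(g_{S^\perp})$ as nonzero multiples of the \emph{same} basis element $e_{\{n+1,\dotsc,2n\}}$, whose contributions to $g_S - \sgn(\sigma_S) g_{S^\perp}$ precisely cancel. To find the actual worst term, I would pass directly to $\pi$-order $-m$: the contribution of this order to each basis element $e_{\{k_0,n+1,\dotsc,\wh{n+k_0},\dotsc,2n\}}$, for $k_0 \in \{1,\dotsc,n\}$, in $g_S$ and in $g_{S^\perp}$ is uniquely determined by choosing the worst term in every factor of $g_S$ (respectively $g_{S^\perp}$) except one. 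A careful but mechanical bookkeeping then shows that all these contributions cancel in $g_S - \sgn(\sigma_S) g_{S^\perp}$ \emph{except} at $k_0 = i$ and $k_0 = i^\vee$, where they combine to give the worst term stated in case (\ref{it:case4}). The hard part throughout will be the sign bookkeeping: tracking $\sgn(\sigma_S) = (-1)^{i+j+1}$ together with the sub-case-dependent signs produced by Lemma \ref{st:WT(g_S)}, and translating the index conversions $i^\vee = n+1-i$, $i^* = 2n+1-i$, $n+i^\vee = i^*$ when rewriting $S^\perp$ in the standard form $\{1,\dotsc,\wh{j'},\dotsc,n,n+i'\}$ required by Lemma \ref{st:WT(g_S)}. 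I would organize the verifications as a table so that each of the nine cases can be checked against the asserted formula line by line.
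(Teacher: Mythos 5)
Your proposal follows the paper's strategy almost exactly for cases (\ref{it:case1})--(\ref{it:case3}) and (\ref{it:case5})--(\ref{it:case9}): you correctly observe that in the self-perp cases $\sgn(\sigma_S) = -1$ so the element is just $2g_S$ and Lemma~\ref{st:WT(g_S)} delivers the answer directly, and that in the non-self-perp cases with distinct weights, Lemma~\ref{st:weight_lem} rules out cancellation, so one combines the worst terms of $g_S$ and $-\sgn(\sigma_S)g_{S^\perp}$ read off from Lemma~\ref{st:WT(g_S)}, keeping both when the $\pi$-orders agree and discarding the less negative one otherwise. Your case-exhaustiveness analysis (computing $S^\perp = \{1,\dotsc,\wh{i^\vee},\dotsc,n,n+j^\vee\}$, identifying $S = S^\perp$ with $i = j^\vee$, identifying weight $(1,\dotsc,1)$ with $i = j$, and $S \preccurlyeq S^\perp$, $S \neq S^\perp$ with $i < j^\vee$) is also sound and consistent with the paper's tabulation.

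Where you genuinely diverge is case (\ref{it:case4}). You correctly observe the cancellation at $\pi$-order $-(m+1)$ and propose a direct expansion at order $-m$: choose the non-worst factor in exactly one of the $n$ wedge factors of $g_S$ (resp. $g_{S^\perp}$), producing contributions to $e_{\{k_0,n+1,\dotsc,\wh{n+k_0},\dotsc,2n\}}$ for each $k_0 \in \{1,\dotsc,n\}$, and then verify by sign bookkeeping that everything cancels except at $k_0 = i$ and $k_0 = i^\vee$. This would work in principle, but it is considerably more laborious and error-prone than the paper's route: the paper first proves a clean algebraic identity (Lemma~\ref{st:g_S_nontriv_wt_1}), namely
\[
   g_S - \sgn(\sigma_S) g_{S^\perp} = (-1)^i g_1 \wedge \dotsb \wedge \wh{g_i} \wedge \dotsb \wedge \wh{g_{i^\vee}} \wedge \dotsb \wedge g_n \wedge \bigl[(e_i \otimes 1) \wedge (e_{i^\vee} \otimes 1) - (\pi^{-1}e_i \otimes 1) \wedge (\pi e_{i^\vee} \otimes 1)\bigr],
\]
which pre-packages all the cancellation into one factorization. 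The worst term is then read off in one step by taking the worst term of each wedge factor in this product, with no term-by-term cancellation analysis needed. This identity is the key idea you are missing, and it buys not just brevity but also certainty in the sign bookkeeping you (rightly) flag as the hard part. I would strongly recommend looking for such a factorization rather than attempting the order-$(-m)$ expansion directly: the identity $g_i \wedge g_{i^*} - g_{i^\vee} \wedge g_{n+i} = (e_i\otimes 1)\wedge(e_{i^\vee}\otimes 1) - (\pi^{-1}e_i\otimes 1)\wedge(\pi e_{i^\vee}\otimes 1)$ is elementary to check, and the factorization follows by pulling the common factors $g_1 \wedge \dotsb \wedge \wh{g_i} \wedge \dotsb \wedge \wh{g_{i^\vee}} \wedge \dotsb \wedge g_n$ out of $g_S + g_{S^\perp}$.
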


\begin{proof}
It is clear that the descriptions of $S$ and $\mathbf{w}_S$ in the nine cases cover all possibilities and are mutually exclusive.  So we have to show that in each case, the calculation of the worst terms is correct.  In cases \eqref{it:case1}--\eqref{it:case3}, this is simply read off from Lemma \ref{st:WT(g_S)}.  The same goes for cases \eqref{it:case5}--\eqref{it:case9}, using also Lemma \ref{st:weight_lem}, and Remark \ref{rk:sign_sigma_S_type_(n-1,1)} to compute the sign of $\sigma_S$.

Thus the only case in which any subtleties arise is \eqref{it:case4}, where $S$ and $S^\perp$ are distinct but have the same weight, and therefore the respective worst terms of $g_S$ and $-\sgn(\sigma_S) g_{S^\perp}$ may, and in fact do, cancel.  Here is the basic calculation, which we formulate as a separate lemma.

\begin{lem}\label{st:g_S_nontriv_wt_1}
Let $S = \{1,\dotsc,\wh{i},\dotsc, n, n+i\}$ for some $i < m+1$.  Then
\begin{multline*}
   g_S - \sgn(\sigma_S)  g_{S^\perp} = (-1)^i g_1 \wedge \dotsb \wedge \wh{g_i} \wedge \dotsb \wedge \wh{g_{i^\vee}} \wedge \dotsb \wedge g_n\\
    \wedge \bigl[(e_i \otimes 1) \wedge (e_{i^\vee} \otimes 1) - (\pi^{-1}e_i \otimes 1) \wedge (\pi e_{i^\vee} \otimes 1)\bigr].
\end{multline*}
\end{lem}

\begin{proof}
We have
\[
   S^\perp = \{1,\dotsc,\wh{i^\vee},\dotsc,n,i^*\}
\]
and, by Remark \ref{rk:sign_sigma_S_type_(n-1,1)},
\[
   -\sgn(\sigma_S)  = (-1)^{2i} = 1.
\]
Hence
\begin{align*}
   g_S - \sgn(\sigma_S)  g_{S^\perp} 
      &= g_S + g_{S^\perp}\\
      &= g_1 \wedge \dots \wedge \wh{g_{i}} \wedge \dotsb \wedge g_n \wedge g_{n+i} + g_1 \wedge \dotsb \wedge \wh{g_{i^\vee}} \wedge \dotsb \wedge g_n \wedge g_{i^*}\\
      &= (-1)^{n-i^\vee} g_1 \wedge \dotsb \wedge \wh{g_i} \wedge \dotsb \wedge \wh{g_{i^\vee}} \wedge \dotsb \wedge g_n \wedge g_{i^\vee} \wedge g_{n+i}\\
      &\quad\quad + (-1)^{n-1-i} g_1 \wedge \dotsb \wedge \wh{g_i} \wedge \dotsb \wedge \wh{g_{i^\vee}} \wedge \dotsb \wedge g_n \wedge g_{i} \wedge g_{i^*}\\
      &= (-1)^ig_1 \wedge \dotsb \wedge \wh{g_i} \wedge \dotsb \wedge \wh{g_{i^\vee}} \wedge \dotsb \wedge g_n\\
      &\qquad\qquad\qquad\qquad\qquad\qquad\qquad\qquad \wedge [g_i \wedge g_{i^*} - g_{i^\vee} \wedge g_{n+i}].
\end{align*}
It is elementary to verify that
\[
   g_i \wedge g_{i^*} - g_{i^\vee} \wedge g_{n+i} = (e_i \otimes 1) \wedge(e_{i^\vee} \otimes 1) - (\pi^{-1}e_i \otimes 1) \wedge (\pi e_{i^\vee} \otimes 1),
\]
which completes the proof.
\end{proof}

Returning to the calculation of the worst terms in case \eqref{it:case4} in Lemma \ref{st:WT(g_S_pm_g_S^perp)}, Lemma \ref{st:g_S_nontriv_wt_1} implies that
\begin{align*}
   \WT\bigl(g_S - {}&\sgn(\sigma_S) g_{S^\perp}\bigr)\\
    ={} &(-1)^i (e_1 \otimes 1) \wedge \dotsb \wedge \wh{(e_i \otimes 1)} \wedge \dotsb \wedge (e_m \otimes 1)\\
   &\qquad\wedge (-\pi e_{m+1} \otimes \pi^{-1}) \wedge \dotsb \wedge \wh{(-\pi e_{i^\vee} \otimes \pi^{-1})} \wedge \dotsb \wedge (-\pi e_n \otimes \pi^{-1})\\
   &\qquad\qquad\wedge \bigl[(e_i \otimes 1) \wedge (e_{i^\vee} \otimes 1) - (\pi^{-1}e_i \otimes 1) \wedge (\pi e_{i^\vee} \otimes 1)\bigr]\\
   ={} & (-1)^{m+1}\pi^{-m}\bigl( e_{\{i,n+1,\dotsc,\wh{n+i},\dotsc,2n\}} - e_{\{i^\vee, n+1,\dotsc,\wh{i^*},\dotsc,2n\}} \bigr).
\end{align*}
This completes the proof of Lemma \ref{st:WT(g_S_pm_g_S^perp)}.
\end{proof}

The following asserts that appropriate multiples of the elements $g_S - \sgn(\sigma_S) g_{S^\perp}$, for $S \preccurlyeq S^\perp$ of type $(n-1,1)$, form an $\O_F$-basis of $W(\Lambda_m)_{-1}^{n-1,1}$.

\begin{prop}\label{st:W(Lambda_m)_coeff_conds}
Let $w \in W_{-1}^{n-1,1}$, and write
\[
   w = \sum_{\substack{S \preccurlyeq S^\perp\\ \text{of type } (n-1,1)}}a_S\bigl(g_S - \sgn(\sigma_S) g_{S^\perp}\bigr),
   \quad a_S \in F.
\]
Then $w \in W(\Lambda_m)_{-1}^{n-1,1}$ $\iff$
\begin{enumerate}
\item\label{it:cond_1}
if $S = \{1,\dotsc,\wh{m+1},\dotsc,n,n+m+1\}$, then $\ord_\pi(a_S) \geq m+1$;
\item\label{it:cond_2}
if $S = \{1,\dotsc,\wh{i^\vee},\dotsc,n,n+i\}$ for some $i < m+1$, then $\ord_\pi(a_S) \geq m-1$;
\item\label{it:cond_3}
if $S = \{1,\dotsc,\wh{i^\vee},\dotsc,n,n+i\}$ for some $i > m+1$, then $\ord_\pi(a_S) \geq m+1$;
\item\label{it:cond_4}
if $S = \{1,\dotsc,\wh{i},\dotsc,n,n+i\}$ for some $i < m+1$, then $\ord_\pi(a_S) \geq m$;
\item\label{it:cond_5}
if $S = \{1,\dotsc,\wh j,\dotsc,n,n+i\}$ for some $i < j^\vee < m+1$, then $\ord_\pi(a_S) \geq m-1$;
\item\label{it:cond_6}
if $S = \{1,\dotsc,\wh j,\dotsc,n,n+i\}$ for some $i < m+1 \leq j^\vee$, then $\ord_\pi(a_S) \geq m$; and
\item\label{it:cond_7}
if  $S = \{1,\dotsc,\wh j,\dotsc,n,n+i\}$ for some $m+1 \leq i < j$, then $\ord_\pi(a_S) \geq m+1$.
\end{enumerate}
\end{prop}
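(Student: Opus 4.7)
For the $\Leftarrow$ direction, the strategy is to match each condition (i)--(vii) to the corresponding case (1)--(9) of Lemma~\ref{st:WT(g_S_pm_g_S^perp)}: the imposed lower bound on $\ord_\pi(a_S)$ equals the absolute value of the $\pi$-order of the worst term of $g_S - \sgn(\sigma_S) g_{S^\perp}$, which will directly imply $a_S(g_S - \sgn(\sigma_S) g_{S^\perp}) \in W(\Lambda_m)$, and hence $w \in W(\Lambda_m)$ after summing.

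For the $\Rightarrow$ direction, the plan is to decompose $w$ by weight. By Lemma~\ref{st:weight_lem}, the weight decomposition $W = \bigoplus_\mathbf{w} W|_\mathbf{w}$ restricts to $W(\Lambda_m)$, so the hypothesis $w \in W(\Lambda_m)$ implies $w|_\mathbf{w} \in W(\Lambda_m)$ for every $\mathbf{w} \in \{0,1,2\}^n$. For each weight $\mathbf{w}$ having one entry equal to $2$ and one equal to $0$ (i.e.\ $\mathbf{w} \neq (1,\dotsc,1)$), there is a unique subset $S_\mathbf{w}$ of type $(n-1,1)$ with $\mathbf{w}_{S_\mathbf{w}} = \mathbf{w}$, and exactly one of $S_\mathbf{w}, S_\mathbf{w}^\perp$ lies in the index set $\{S \preccurlyeq S^\perp\}$. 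Hence $w|_\mathbf{w} = \pm a_S g_{S_\mathbf{w}}$ for that unique $S$ in our basis. Requiring this to lie in $W(\Lambda_m)$ will bound $\ord_\pi(a_S)$ from below by the absolute value of the $\pi$-order of $\WT(g_{S_\mathbf{w}})$ from Lemma~\ref{st:WT(g_S)}; applied to each of the weights $\mathbf{w}_S, \mathbf{w}_{S^\perp}$ (when distinct) and taking the stronger bound, this recovers conditions (ii), (iii), and (v)--(vii).

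The remaining case $\mathbf{w} = (1,\dotsc,1)$ is the main technical obstacle. The contributing basis elements are $2 g_{S_1}$ from case~(1), with $S_1 := \{1,\dotsc,\wh{m+1},\dotsc,n,n+m+1\}$, and $g_{S_{(i)}} + g_{S_{(i)}^\perp}$ from case~(4), with $S_{(i)} := \{1,\dotsc,\wh i,\dotsc,n,n+i\}$ for each $i < m+1$. The plan is to extract condition~(i) first, from the coefficient of $e_{\{n+1,\dotsc,2n\}}$ in $w$: by Lemma~\ref{st:g_S_nontriv_wt_1}, the bracketed factor $(e_i \otimes 1) \wedge (e_{i^\vee} \otimes 1) - (\pi^{-1} e_i \otimes 1) \wedge (\pi e_{i^\vee} \otimes 1)$ in $g_{S_{(i)}} + g_{S_{(i)}^\perp}$ has two summands, each supplying only one of the basis vectors indexed at positions $n+i, n+i^\vee$ of the $\Lambda_m$-basis (its partner sitting at position $i$ or $i^\vee$, neither of which lies in $\{n+1,\dotsc,2n\}$). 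Thus case~(4) contributes nothing to the $e_{\{n+1,\dotsc,2n\}}$-coefficient, leaving only the case~(1) contribution $a_{S_1} \pi^{-(m+1)}$ up to a unit of $\O_F^\times$, which will force $\ord_\pi(a_{S_1}) \geq m+1$. Then condition~(iv) will follow, for each $i < m+1$, by examining the coefficient of $e_{T_i}$ with $T_i := \{i,n+1,\dotsc,\wh{n+i},\dotsc,2n\}$: an analogous bracket-analysis shows that among case~(4) basis elements only $g_{S_{(i)}} + g_{S_{(i)}^\perp}$ contributes, and the resulting coefficient is of the form $\pi^{-m}\bigl[\pm a_{S_1} \pm a_{S_{(i)}}\bigr]$, which combined with the already-derived bound on $a_{S_1}$ yields $\ord_\pi(a_{S_{(i)}}) \geq m$.

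The hard part will be this weight-$(1,\dotsc,1)$ analysis, where multiple basis elements interact and one must pick $e$-coefficients that isolate one $a_S$ at a time: $e_{\{n+1,\dotsc,2n\}}$ pins down $a_{S_1}$ (since case~(4) gives no contribution there), and then the $e_{T_i}$'s cascade to extract each $a_{S_{(i)}}$ using the bound on $a_{S_1}$. The other weights reduce cleanly to a single-basis-element problem, controlled directly by Lemma~\ref{st:WT(g_S)}.
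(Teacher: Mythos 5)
Your proposal is correct and follows essentially the same route as the paper's proof: the $\Leftarrow$ direction is read off from the worst-term computation in Lemma~\ref{st:WT(g_S_pm_g_S^perp)}; the $\Rightarrow$ direction decomposes by weight via Lemma~\ref{st:weight_lem}, treats all weights $\neq(1,\dotsc,1)$ through the uniqueness observation of Remark~\ref{rk:(n-1,1)_possible weights}, and isolates the weight-$(1,\dotsc,1)$ case as the delicate one, where the bracket structure from Lemma~\ref{st:g_S_nontriv_wt_1} shows that only the self-perp set contributes to the $e_{\{n+1,\dotsc,2n\}}$-coefficient (giving condition~(1)), and then the $e_{\{i,n+1,\dotsc,\wh{n+i},\dotsc,2n\}}$-coefficients cascade to give condition~(4).

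Two small remarks. First, for the non-$(1,\dotsc,1)$ weights you invoke Lemma~\ref{st:WT(g_S)} on the single vector $g_{S_\mathbf{w}}$ and take the stronger bound over the two weights $\mathbf{w}_S$, $\mathbf{w}_{S^\perp}$, whereas the paper directly reads the combined worst term off Lemma~\ref{st:WT(g_S_pm_g_S^perp)}; these are logically equivalent once the weight decomposition is in hand, since $\mathbf{w}_S \neq \mathbf{w}_{S^\perp}$ in those cases. Second, your claim that the $e_{\{i,n+1,\dotsc,\wh{n+i},\dotsc,2n\}}$-coefficient ``is of the form $\pi^{-m}[\pm a_{S_1}\pm a_{S_{(i)}}]$'' silently asserts the $a_{S_1}$ piece has $\pi$-order exactly $-m$, which you have not verified (it could a priori be lower, since the worst term of $2g_{S_{m+1}}$ has order $-(m+1)$); but the conclusion survives either way because the already-established bound $\ord_\pi(a_{S_1})\geq m+1$ makes that contribution integral regardless, so the argument is sound.
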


\begin{proof}
The implication $\Longleftarrow$ is immediate from Lemma \ref{st:WT(g_S_pm_g_S^perp)}.  For the implication $\Longrightarrow$, assume $w \in W(\Lambda_m)_{-1}^{n-1,1}$, and write
\[
   w = \sum_{\mathbf{w}\in\ZZ^n} 
       \sum_{\substack{S \preccurlyeq S^\perp\\ \text{of type } (n-1,1)\\ \text{and weight }\mathbf{w}}}a_S\bigl(g_S - \sgn(\sigma_S) g_{S^\perp}\bigr).
\]
By Lemma \ref{st:weight_lem}, $w \in W(\Lambda_m)_{-1}^{n-1,1}$ $\iff$ for each $\mathbf{w}$,
\begin{equation}\label{disp:fixed_w_lin_comb}
   \sum_{\substack{S \preccurlyeq S^\perp\\ \text{of type } (n-1,1)\\ \text{and weight }\mathbf{w}}}a_S\bigl(g_S - \sgn(\sigma_S) g_{S^\perp}\bigr) \in W(\Lambda_m)_{-1}^{n-1,1}.
\end{equation}
Thus we reduce to working weight by weight.  If $\mathbf{w} \neq (1,\dotsc,1)$, then by Remark \ref{rk:(n-1,1)_possible weights} there is at most one $S$ of type $(n-1,1)$ and weight $\mathbf w$.  This and Lemma \ref{st:WT(g_S_pm_g_S^perp)} imply \eqref{it:cond_2}, \eqref{it:cond_3}, \eqref{it:cond_5}, \eqref{it:cond_6}, and \eqref{it:cond_7}.

The case $\mathbf{w} = (1,\dotsc,1)$ requires a finer analysis, since in this case all the sets
\[
   S_i := \bigl\{1,\dotsc,\wh{i},\dotsc,n,n+i\bigr\} \quad\text{for}\quad i = 1,\dotsc,m+1
\]
occur as summation indices in \eqref{disp:fixed_w_lin_comb}.  First consider the set $S_{m+1}$.  For $i < m+1$, write $g_{S_i} - \sgn(\sigma_{S_i}) g_{S_i^\perp}$ as a linear combination of $e_{S}$'s.  By Lemma \ref{st:g_S_nontriv_wt_1}, every $e_{S}$ that occurs 
in this linear combination must involve wedge factors of either $e_i \otimes 1$ and $e_{i^\vee} \otimes 1$ together, or $\pi^{-1} e_i \otimes 1$ and $\pi e_{i^\vee} \otimes 1$ together.  By Lemma \ref{st:WT(g_S_pm_g_S^perp)},
\[
   \WT\bigl(g_{S_{m+1}} - \sgn(\sigma_{S_{m+1}}) g_{S_{m+1}}\bigr) = \pi^{-(m+1)}e_{\{n+1,\dotsc,2n\}}
\]
involves no such $e_S$.  
This implies \eqref{it:cond_1}.

By the same argument, for fixed $i < m+1$, the $e_S$-basis vectors that occur in $\WT\bigl(g_{S_i} - \sgn(\sigma_{S_i}) g_{S_i^\perp}\bigr)$, namely $e_{\{i,n+1,\dotsc,\wh{n+i},\dotsc,2n\}}$ and $e_{\{i^\vee,n+1,\dotsc,\wh{i^*},\dotsc,2n\}}$, don't occur in $g_{S_j} - \sgn(\sigma_{S_j}) g_{S_j^\perp}$ for $j \neq i$, $m+1$.  It follows easily from this, the fact that we've already shown that $\ord_\pi(a_{S_{m+1}}) \geq m+1$, and Lemma \ref{st:WT(g_S_pm_g_S^perp)} that $\ord_\pi(a_{S_i}) \geq m$.
\end{proof}

The following consequence is what we'll need for the proof of Proposition \ref{st:X_1=0}.

\begin{cor}\label{st:im_basis}
Let $R$ be a $k$-algebra.  Then $L_m^{n-1,1}(R) \subset W(\Lambda_m) \otimes_{\O_F} R$
is a free $R$-module on the basis elements
\begin{enumerate}
\item\label{it:im1} $e_{\{n+1,\dotsc,2n\}}$;
\item\label{it:im2} $e_{\{i,n+1,\dotsc,\wh{i^*},\dotsc,2n\}}$ for $i = 1,\dotsc, \wh{m+1},\dotsc,n$;
\item\label{it:im3} $e_{\{i,n+1,\dotsc,\wh{n+i},\dotsc,2n\}} - e_{\{i^\vee,n+1,\dotsc,\wh{i^*},\dotsc,2n\}}$ for $i < m+1$;
\item\label{it:im4} $e_{\{i,n+1,\dotsc,\wh{n+j},\dotsc,2n\}} + (-1)^{i+j}e_{\{j^\vee,n+1,\dotsc, \wh{i^*},\dotsc,2n\}}$ for $i < j^\vee < m+1$ and $m+1 < i < j^\vee \leq n$;
\item\label{it:im5} $e_{\{m+1,n+1,\dotsc,\wh{n+i},\dotsc,2n\}}$ for $i = 1,\dotsc, \wh{m+1},\dotsc,n$; and
\item\label{it:im6} $e_{\{i,n+1,\dotsc,\wh{n+j},\dotsc,2n\}} + (-1)^{i+j+1}e_{\{j^\vee,n+1,\dotsc, \wh{i^*},\dotsc,2n\}}$ for $i < m+1 < j^\vee \leq n$.
\end{enumerate}
\end{cor}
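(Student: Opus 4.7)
The plan is to deduce the corollary directly from Proposition \ref{st:W(Lambda_m)_coeff_conds} by reducing modulo $\pi$, with Lemma \ref{st:WT(g_S_pm_g_S^perp)} supplying the explicit worst terms.

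Proposition \ref{st:W(Lambda_m)_coeff_conds} amounts to the statement that an $\O_F$-basis for $W(\Lambda_m)_{-1}^{n-1,1}$ is given by the scaled elements $\pi^{c(S)}(g_S - \sgn(\sigma_S) g_{S^\perp})$, indexed by $S \preccurlyeq S^\perp$ of type $(n-1,1)$, where $c(S) \in \{m-1,m,m+1\}$ is the lower bound on $\ord_\pi(a_S)$ prescribed by the condition of the proposition applicable to $S$.  Base-changing to $R$, the images of these elements span $L_m^{n-1,1}(R)$.

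Since $\pi R = 0$, the image of any $x \in W(\Lambda_m)$ in $W(\Lambda_m) \otimes_{\O_F} R$ coincides with the image of $\WT(x)$.  Applied to the basis elements above, Lemma \ref{st:WT(g_S_pm_g_S^perp)} yields in each of its nine cases a formula of the form $\WT(g_S - \sgn(\sigma_S) g_{S^\perp}) = u_S \pi^{-c(S)} y_S$, where $u_S \in \O_F^\times$ is a unit (the factor $2$ in cases (2) and (3) of the lemma is invertible since $\operatorname{char} k \neq 2$) and $y_S$ is a signed sum of one or two basis vectors $e_{S'}$.  The powers of $\pi$ cancel against $\pi^{c(S)}$, so the image in $W(\Lambda_m) \otimes_{\O_F} R$ is a unit multiple of one of the elements listed in items (1)--(6) of the corollary.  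The matching is: case (1) of the lemma produces item (1); cases (2) and (3) (indexed respectively by self-perp $S$ with $i < m+1$ and $i > m+1$) together produce item (2); case (4) produces item (3); cases (5) and (9) produce the two clauses of item (4); cases (6) and (8) together produce item (5); and case (7) produces item (6).

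Finally, $R$-linear independence follows from the observation that, across items (1)--(6) of the corollary, the basis vectors $e_{S'}$ involved (as singletons in items (1), (2), (5) and as pairs in items (3), (4), (6)) correspond to pairwise distinct subsets $S'$: the weight vector $\mathbf{w}_{S'}$, together with the positions of its $2$ and $0$ entries relative to $m+1$, distinguishes the items, and within each item distinct parameters $(i,j)$ yield distinct $S'$ pairs.  Hence the listed elements form an $R$-basis of $L_m^{n-1,1}(R)$.  The only real obstacle is the combinatorial bookkeeping needed to match the nine cases of Lemma \ref{st:WT(g_S_pm_g_S^perp)} against the seven conditions of Proposition \ref{st:W(Lambda_m)_coeff_conds} and the six items of the corollary; this is organized naturally by the weight of $S$ and by the positions of the indices $i$, $j$ relative to $m+1$.
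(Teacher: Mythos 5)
Your proposal is correct and takes essentially the same route as the paper: the paper's proof is the single sentence that the corollary is ``immediate from Lemma \ref{st:WT(g_S_pm_g_S^perp)} and Proposition \ref{st:W(Lambda_m)_coeff_conds}, using that under the canonical map $W(\Lambda_m)_{-1}^{n-1,1} \to W(\Lambda_m)\otimes_{\O_F} R$ the image of any element $w$ is the same as the image of $\WT(w)$,'' and what you have written is a careful unpacking of exactly that argument, including the observation that the scaled elements $\pi^{c(S)}(g_S - \sgn(\sigma_S)g_{S^\perp})$ form an $\O_F$-basis of $W(\Lambda_m)_{-1}^{n-1,1}$, the matching of the nine cases of the lemma to the six items of the corollary, and the linear-independence check via disjointness of the $e_{S'}$ supports.

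<br><br>

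One very small point worth noting (it affects the bookkeeping but not the correctness of your argument): as stated, item~\eqref{it:im4} of the corollary is produced by Lemma \ref{st:WT(g_S_pm_g_S^perp)} cases \eqref{it:case5} and \eqref{it:case9}, and item~\eqref{it:im6} by case \eqref{it:case7}, exactly as you say; but in item~\eqref{it:im6} the parameters should implicitly exclude $j = i$ (that degenerate case has $\mathbf{w}_S = (1,\dotsc,1)$ and is already accounted for by case \eqref{it:case4}, i.e.\ item~\eqref{it:im3}). With that understanding, the count of basis elements comes out to $(2m+1)(m+1) = \operatorname{rk}_{\O_F} W(\Lambda_m)_{-1}^{n-1,1}$, as it must, and your linear-independence argument goes through.
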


\begin{proof}
Immediate from Lemma \ref{st:WT(g_S_pm_g_S^perp)} and Proposition \ref{st:W(Lambda_m)_coeff_conds}, using that under the canonical map $W(\Lambda_m)_{-1}^{n-1,1} \to W(\Lambda_m) \otimes_{\O_F} R$, the image of any element $w$ is the same as the image of $\WT(w)$.
\end{proof}

\subsection{Proof of Proposition \ref{st:X_1=0}}

We now prove Proposition \ref{st:X_1=0}.  Let $R$ be a $k$-algebra, and let $X_1$ be an $(n-1) \times (n-1)$-matrix with entries in $R$ satisfying
\[
   X_1 = -JX_1^\Rt J, \quad X_1^2 = 0, \quad \sideset{}{^2}\bigwedge X_1 = 0, \quad\text{and}\quad \tr X_1 = 0,
\]
and such that when $X_1$ is plugged into \eqref{disp:U^Gr} along with $X_2$, $X_3$, and $X_4$ all $0$, the resulting $R$-point on $U^\wedge$ lies in $U$, i.e.\ it satisfies \eqref{it:new_cond}.  Our problem is to show that $X_1 = 0$.

In fact we will show that just the conditions $X_1 = -JX_1^\Rt J$ and \eqref{it:new_cond} imply that $X_1 = 0$.  Decompose $X_1$ into $m \times m$ blocks
\[
   X_1 = \begin{pmatrix}
          A & B\\
          C & D\\
       \end{pmatrix}.
\]
Then 
$X_1 = -JX_1^tJ$ is equivalent to
\begin{equation}\label{disp:LM_conds}
   D = A^\ad, \quad
   B = -B^\ad, \quad\text{and}\quad
   C = -C^\ad,
\end{equation}
where the superscript $\ad$ means to take the transpose across the antidiagonal, or in other words, to take the adjoint with respect to the standard split symmetric form.  We are going to show that \eqref{it:new_cond} imposes the \emph{same conditions as in \eqref{disp:LM_conds} except with opposite signs.}  Since $\charac k \neq 2$, this will imply that $A = B = C = D = 0$.

Let $v \in W(\Lambda_m) \otimes_{\O_F} R$ denote the wedge product (say from left to right) of the $n$ columns of the matrix \eqref{disp:U^Gr}, where each column is regarded as an $R$-linear combination of Arzdorf's basis elements \eqref{disp:Arzdorf_basis}.  Condition \eqref{it:new_cond} is that
\[
   v \in L_m^{n-1,1}(R).
\]

We begin by analyzing the implications of this condition on the entries of $A$ and $D$.  Let
\[
   1 \leq i,j \leq m,
\]
and let $a_{ij}$ and $d_{ij}$ denote the $(i,j)$-entries of $A$ and $D$, respectively.  Let
\[
   S := \{m+1+i,n+1,\dotsc,(n+m+1+j)\sphat\;,\dotsc,2n\}.
\]
Then
\[
   S^\perp = \{m+1-j,n+1,\dotsc,(n+m+1-i)\sphat\;,\dotsc,2n\}.
\]
Writing $v$ as a linear combination of $e_{S'}$'s, the $e_S$-term in $v$ is
\[
\begin{split}
   & (-1)^{1+j}a_{ij}(e_{m+1+i} \otimes 1) \\ 
   &\qquad\wedge (\pi e_{m+2} \otimes 1) \wedge \dotsb \wedge (\pi e_{m+1+j} \otimes 1)\sphat\, \wedge \dotsb \wedge (\pi e_n \otimes 1)\\
   &\qquad\qquad\wedge (e_1 \otimes 1) \wedge \dotsb \wedge (e_m \otimes 1) \wedge (\pi e_{m+1} \otimes 1)\\
   &\qquad\qquad\qquad\qquad\qquad\qquad\qquad\qquad\qquad\qquad = (-1)^{1+j+(m+1)(m-1)} a_{ij}e_S\\
   &\qquad\qquad\qquad\qquad\qquad\qquad\qquad\qquad\qquad\qquad = (-1)^{m+j}a_{ij}e_S,
\end{split}
\]
and the $e_{S^\perp}$-term in $v$ is
\[
\begin{split}
   & (-1)^{1+n-i}d_{m+1-j,m+1-i}(\pi^{-1}e_{m+1-j} \otimes 1)\\
   &\qquad \wedge (\pi e_{m+2} \otimes 1) \wedge \dotsb \wedge (\pi e_n \otimes 1)\\
   &\qquad\qquad\wedge (e_1 \otimes 1) \wedge \dotsb \wedge (e_{m+1-i} \otimes 1)\sphat\, \wedge \dotsb \wedge (e_m \otimes 1) \wedge (\pi e_{m+1} \otimes 1)\\
   &\qquad\qquad\qquad\qquad\qquad\qquad\qquad\qquad\qquad = (-1)^{1+n-i+m^2} d_{m+1-j,m+1-i}e_{S^\perp}\\
   &\qquad\qquad\qquad\qquad\qquad\qquad\qquad\qquad\qquad =    (-1)^{m+i}d_{m+1-j,m+1-i}e_{S^\perp}.
\end{split}
\]
If $i = j$, then using Corollary \ref{st:im_basis}, especially part \eqref{it:im3} applied with $m+1-i$ in place of $i$, the condition $v \in L_m^{n-1,1}(R)$ requires that
\[
   d_{m+1-i,m+1-i} = -a_{ii}.
\]
If $i \neq j$, then using Corollary \ref{st:im_basis}, especially part \eqref{it:im6} applied with $m+1-j$ in place of $i$ and $m+1-i$ in place of $j$, we similarly find that
\[
   d_{m+1-j,m+1-i} = -a_{ij}.
\]
Hence $D^\ad = -A$, as desired.

We next turn to $B$.  By \eqref{disp:LM_conds} the antidiagonal entries of $B$ are $0$.  Let
\[
   1 \leq i < m+1-j \leq m,
\]
and let $b_{ij}$ denote the $(i,j)$-entry of $B$.  Let
\[
   S := \{m+1+i,n+1,\dotsc,\wh{n+j},\dotsc,2n\}.
\]
Then
\[
   S^\perp = \{j^\vee,n+1,\dotsc,(n+m+1-i)\sphat\; ,\dotsc,2n\},
\]
which differs from $S$ since $i < m+1-j$.  The $e_S$-term in $v$ is
\[
\begin{split}
   & (-1)^{1+m+j}b_{ij}(e_{m+1+i} \otimes 1) \wedge (\pi e_{m+2} \otimes 1) \wedge \dotsb \wedge (\pi e_n \otimes 1)\\ 
   &\qquad\wedge (e_1 \otimes 1) \wedge \dotsb \wedge\wh{(e_{j} \otimes 1)} \wedge \dotsb \wedge (e_m \otimes 1) \wedge (\pi e_{m+1} \otimes 1)\\
   &\qquad\qquad\qquad\qquad\qquad\qquad\qquad\qquad\qquad\qquad\qquad = (-1)^{1+m+j+m^2} b_{ij}e_S\\
   &\qquad\qquad\qquad\qquad\qquad\qquad\qquad\qquad\qquad\qquad\qquad = (-1)^{1+j}b_{ij}e_S,
\end{split}
\]
and the $e_{S^\perp}$-term in $v$ is
\[
\begin{split}
   & (-1)^{1+m+(m+1-i)}b_{m+1-j,m+1-i}(e_{j^\vee} \otimes 1) \wedge (\pi e_{m+2} \otimes 1) \wedge \dotsb \wedge (\pi e_n \otimes 1)\\ 
   &\qquad\wedge (e_1 \otimes 1) \wedge \dotsb \wedge (e_{m+1-i} \otimes 1)\sphat\, \wedge \dotsb \wedge (e_m \otimes 1) \wedge (\pi e_{m+1} \otimes 1)\\
   &\qquad\qquad\qquad\qquad\qquad\qquad\qquad\qquad\qquad\qquad = (-1)^{i+m^2} b_{m+1-j,m+1-i}e_{S^\perp}\\
   &\qquad\qquad\qquad\qquad\qquad\qquad\qquad\qquad\qquad\qquad = (-1)^{i+m}b_{m+1-j,m+1-i}e_{S^\perp}.
\end{split}
\]
As above, this time using Corollary \ref{st:im_basis}\eqref{it:im4} with $m+1+i$ in place of $i$, we find that $v \in L_m^{n-1,1}(R)$ requires that
\[
   b_{m+1-j,m+1-i} = b_{ij}.
\]
Hence $B^\ad = B$, as desired.

We finally turn to $C$, for which the argument is almost identical to the one for $B$.  By \eqref{disp:LM_conds} the antidiagonal entries of $C$ are $0$.  Let
\[
   1 \leq i < m+1-j \leq m,
\]
and let $c_{ij}$ denote the $(i,j)$-entry of $C$.  Let
\[
   S := \{i,n+1,\dotsc,(n+m+1+j)\sphat\; ,\dotsc,2n\}.
\]
Then
\[
   S^\perp = \{m+1-j,n+1,\dotsc,\wh{i^*},\dotsc,2n\} \neq S.
\]
  The $e_S$-term in $v$ is
\[
\begin{split}
   & (-1)^{1+j}c_{ij}(\pi^{-1}e_{i} \otimes 1)\\
   &\qquad \wedge (\pi e_{m+2} \otimes 1) \wedge \dotsb \wedge (\pi e_{m+1+j} \otimes 1)\sphat\, \wedge \dotsb \wedge (\pi e_n \otimes 1)\\ 
   &\qquad\qquad\wedge (e_1 \otimes 1) \wedge \dotsb \wedge (e_m \otimes 1) \wedge (\pi e_{m+1} \otimes 1)\\
   &\qquad\qquad\qquad\qquad\qquad\qquad\qquad\qquad\qquad\qquad = (-1)^{1+j+(m+1)(m-1)} c_{ij}e_S\\
   &\qquad\qquad\qquad\qquad\qquad\qquad\qquad\qquad\qquad\qquad = (-1)^{j+m}c_{ij}e_S,
\end{split}
\]
and the $e_{S^\perp}$-term in $v$ is
\[
\begin{split}
   & (-1)^{1+m+1-i}c_{m+1-j,m+1-i}(\pi^{-1}e_{m+1-j} \otimes 1)\\
   &\qquad \wedge (\pi e_{m+2} \otimes 1) \wedge \dotsb \wedge (\pi e_{i^\vee} \otimes 1)\sphat\, \wedge \dotsb \wedge (\pi e_n \otimes 1)\\ 
   &\qquad\qquad\wedge (e_1 \otimes 1) \wedge \dotsb \wedge (e_m \otimes 1) \wedge (\pi e_{m+1} \otimes 1)\\
   &\qquad\qquad\qquad\qquad\qquad\qquad\qquad\qquad\qquad = (-1)^{m+i+(m+1)(m-1)} c_{m+1-j,m+1-i}e_{S^\perp}\\
   &\qquad\qquad\qquad\qquad\qquad\qquad\qquad\qquad\qquad = (-1)^{i+1}c_{m+1-j,m+1-i}e_{S^\perp}.
\end{split}
\]
Using Corollary \ref{st:im_basis}\eqref{it:im4} with $m+1+j$ in place of $j$, we find as before that $v \in L_m^{n-1,1}(R)$ requires that
\[
   c_{m+1-j,m+1-i} = c_{ij}.
\]
Hence $C^\ad = C$, as desired.  This completes the proof of Proposition \ref{st:X_1=0}, which in turn completes the proof of Theorem \ref{st:main_thm}.\qed

\section{Further remarks}\label{s:remarks}

In this final section of the paper we collect a few general remarks.  We return to the setting of arbitrary $n$, signature $(r,s)$, and $I$ satisfying \eqref{disp:I_cond}.

\subsection{Relation to the Kottwitz condition}\label{ss:kottwitz}

It is notable that the Kottwitz condition didn't intervene explicitly in the proof of Proposition \ref{st:X_1=0} or, more generally, of Theorem \ref{st:main_thm}.  There is a good reason for this, as we shall now see.

For $R$ an $\O_F$-algebra, define the following condition on an $R$-point $(\F_i)_{i}$ of $M_I^\naive$:
\begin{enumerate}
\renewcommand{\theenumi}{K$_n$}
\item\label{it:K_n}
for all $i$, the line $\bigwedge_R^n \F_i \subset W(\Lambda_i) \otimes_{\O_F} R$ is contained in
\begin{equation}\label{disp:K_n_im}
   \im \bigl[ \bigl(W^{r,s} \cap W(\Lambda_i)\bigr) \otimes_{\O_F} R \to W(\Lambda_i) \otimes_{\O_{F}} R \bigr].
\end{equation}
\end{enumerate}
As usual, this defines a condition on $M_I^\naive$ when $r \neq s$, and when $r = s$ it descends from $M_{I,\O_F}^\naive$ to $M_I^\naive$, since in this case $W^{r,s}$ is Galois-stable.

Condition \eqref{it:K_n} is trivially implied by \eqref{it:new_cond}.  In the generic fiber, \eqref{it:K_n} is equivalent to the Kottwitz condition, and in general we have the following.

\begin{lem}\label{st:K_n==>Kottwitz}
Condition \eqref{it:K_n}, and a fortiori condition \eqref{it:new_cond}, implies the Kott\-witz condition.
\end{lem}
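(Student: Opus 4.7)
The plan is to recognize the characteristic polynomial of $\pi \otimes 1$ on $\F_i$ as the scalar by which the functorial top exterior power $\bigwedge^n(T\cdot\id - \pi\otimes 1)$ acts on $\bigwedge_R^n \F_i$, and then use \eqref{it:K_n} to match this scalar against the Kottwitz polynomial $p(T) := (T-\pi)^s(T+\pi)^r$ by computing the operator in the ambient space. The key geometric input is that $\pi \otimes 1$ acts on $V$ with eigenspace decomposition $V = V_{-\pi} \oplus V_\pi$, so on $W^{r,s} = \bigwedge_F^r V_{-\pi} \otimes_F \bigwedge_F^s V_\pi$ the functorial exterior power $\bigwedge^n(T\cdot\id - \pi\otimes 1)$ acts as scalar multiplication by $p(T)$ (the product of the eigenvalues $T-\pi$ and $T+\pi$, taken with multiplicities $s$ and $r$).

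Since $p(T) \in \O_F[T]$, this scalar action preserves the lattice $W^{r,s} \cap W(\Lambda_i)$; after base change by the flat $\O_F$-algebra $R[T]$, the operator $\bigwedge^n(T\cdot\id - \pi\otimes 1)$ therefore acts on
\[
   L := \im\bigl[(W^{r,s} \cap W(\Lambda_i)) \otimes_{\O_F} R[T] \to W(\Lambda_i) \otimes_{\O_F} R[T]\bigr]
\]
by multiplication by $p(T)$. Tensoring the inclusion of \eqref{it:K_n} with the flat $R$-algebra $R[T]$ gives $\bigwedge_R^n \F_i \otimes_R R[T] \subset L$.

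On the other hand, $\pi \otimes 1$ preserves $\F_i$ by \eqref{it:LM1}, so $T\cdot\id - \pi\otimes 1$ is an $R[T]$-linear endomorphism of the free rank-$n$ module $\F_i \otimes_R R[T]$, and its top exterior power acts on the rank-one module $\bigwedge_R^n \F_i \otimes_R R[T]$ as multiplication by the characteristic polynomial $\chi(T) := \det(T\cdot\id - \pi\otimes 1 \mid \F_i)$. Choosing locally on $\Spec R$ a generator $\omega$ of $\bigwedge_R^n \F_i$, the two computations give $\chi(T)\cdot\omega = p(T)\cdot\omega$ in $W(\Lambda_i) \otimes_{\O_F} R[T]$. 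Since $\F_i$ is an $R$-direct summand of $\Lambda_i \otimes_{\O_{F_0}} R$ of rank $n$, the line $\bigwedge_R^n \F_i$ is an $R$-direct summand of $W(\Lambda_i) \otimes_{\O_F} R$; hence $\omega$ extends to a basis of the ambient free module and remains a non-zero-divisor after tensoring with $R[T]$. We conclude $\chi(T) = p(T)$, which is exactly the Kottwitz condition. The principal subtlety is keeping the two natural actions of $\pi\otimes 1$ on $W$ distinct — the derivation action, which computes traces, versus the functorial $\bigwedge^n$-action, which computes determinants and is the one used here — but once this is pinned down the eigenvalue calculation on $W^{r,s}$ reads off $p(T)$ immediately.
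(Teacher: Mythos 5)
Your argument is the paper's in slightly expanded form: the paper records that $\bigwedge^n(T - \pi\otimes 1)$ acts on $W^{r,s}$, hence on the image defining condition (K$_n$), as the scalar $(T+\pi)^r(T-\pi)^s$, and leaves implicit the comparison with the characteristic polynomial that you spell out via the direct-summand/non-zero-divisor step. One small inaccuracy: $R[T]$ need not be flat over $\O_F$ (since $R$ is an arbitrary $\O_F$-algebra), but flatness is not actually used there --- that $\bigwedge^n(T-\pi\otimes 1)$ acts on $L$ by $p(T)$ follows directly from functoriality of the base change from $\O_F[T]$, given that $p(T)\in\O_F[T]$ preserves the lattice $W^{r,s}\cap W(\Lambda_i)$.
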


\begin{proof}
Let $T$ be a formal variable.  For any $w \in W^{r,s}$, the identity
\[
   \Bigl[\sideset{}{^n}\bigwedge (T - \pi \otimes 1)\Bigr] \cdot w = (T+\pi)^r(T-\pi)^s w
\]
holds true.  Hence this identity holds true for any $w$ in the image \eqref{disp:K_n_im}.
\end{proof}

\subsection{Wedge power analogs}\label{ss:wedge_power_analogs}

More generally, one can formulate an analog of condition \eqref{it:K_n} for any wedge power.  For $1 \leq l \leq n$, let
\[
   \tensor[^l]W{^{r,s}} := \bigoplus_{\substack{j+k=l\\ j\leq r\\ k \leq s}}
      \Bigl(\sideset{}{_F^j}\bigwedge V_{-\pi} \otimes_F \sideset{}{_F^k}\bigwedge V_\pi\Bigr) \subset \sideset{}{_F^l}\bigwedge V.
\]
In terms of our previous notation, $\tensor[^n]W{^{r,s}} = W^{r,s}$.  For any $\O_{F_0}$-lattice $\Lambda$ in $F^n$, let
\[
   \tensor[^l]W{} (\Lambda)^{r,s} := \tensor[^l]W{^{r,s}} \cap \sideset{}{_{\O_F}^l}\bigwedge (\Lambda \otimes_{\O_{F_0}} \O_F)
\]
(intersection in $\bigwedge_F^lV$).  Then, for $R$ an $\O_F$-algebra and $(\F_i)_i$ an $R$-point on the naive local model, we formulate the condition that
\begin{enumerate}
\renewcommand{\theenumi}{K$_l$}
\item\label{it:K_l}
for all $i$, the subbundle $\bigwedge_R^l \F_i \subset \bigwedge_R^l (\Lambda \otimes_{\O_{F_0}} R)$ is contained in
\[
   \im \Bigl[ \tensor[^l]W{} (\Lambda_i)^{r,s} \otimes_{\O_F} R \to \sideset{}{_R^l}\bigwedge (\Lambda \otimes_{\O_{F_0}} R) \Bigr].
\]
\end{enumerate}

As usual, this condition descends from $M_{I,\O_F}^\naive$ to $M_I^\naive$ when $r = s$.  Conditions \eqref{it:K_l} for $l = 1,\dotsc$, $n$ are closed conditions on $M_I^\naive$, and they are all implied by the Kottwitz condition in the generic fiber.  Therefore they all hold on $M_I^\loc$.  

We do not know the relation between the \eqref{it:K_l} conditions in general.  If our conjecture that $M_I = M_I^\loc$ holds true, then they are all implied by conditions \eqref{it:LM1}--\eqref{it:new_cond}.  Here is a statement pointing in the other direction.

\begin{lem}
Assume $r \neq s$.  Then conditions $(\mathrm{K}_{r+1})$ and $(\mathrm{K}_{s+1})$ imply the wedge condition \eqref{it:wedge_cond}.
\end{lem}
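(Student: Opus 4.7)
The plan is to exploit the eigenspace decomposition $V = V_{-\pi} \oplus V_\pi$ together with the explicit action of the two operators
\[
   A := \pi\otimes 1 + 1\otimes\pi
   \quad\text{and}\quad
   B := \pi\otimes 1 - 1\otimes\pi
\]
on it.  Since $\pi\otimes 1$ acts as $\mp\pi$ on $V_{\mp\pi}$ and $1\otimes\pi$ acts as $\pi\cdot\id$ throughout, the operator $A$ vanishes on $V_{-\pi}$ and acts as $2\pi\cdot\id$ on $V_\pi$; dually, $B$ vanishes on $V_\pi$ and acts as $-2\pi\cdot\id$ on $V_{-\pi}$.

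Consequently, the $l$-th exterior power $\bigwedge^l A$ annihilates every summand $\bigwedge^j V_{-\pi} \otimes_F \bigwedge^k V_\pi$ with $j \geq 1$, because any decomposable element of such a summand contains at least one wedge factor in $V_{-\pi}$, on which $A$ vanishes.  Now every summand of $\tensor[^{s+1}]W^{r,s}$ satisfies $j+k = s+1$ with $k \leq s$, forcing $j \geq 1$; hence $\bigwedge^{s+1} A$ annihilates $\tensor[^{s+1}]W^{r,s}$ and, being induced by an operator that preserves both $\Lambda_i \otimes_{\O_{F_0}} \O_F$ and the eigenspace decomposition, also its $\O_F$-form $\tensor[^{s+1}]W(\Lambda_i)^{r,s}$ and the image $\tensor[^{s+1}]W(\Lambda_i)^{r,s} \otimes_{\O_F} R \to \bigwedge_R^{s+1}(\Lambda_i \otimes_{\O_{F_0}} R)$.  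Symmetrically, every summand of $\tensor[^{r+1}]W^{r,s}$ has $k \geq 1$, so $\bigwedge^{r+1} B$ annihilates $\tensor[^{r+1}]W(\Lambda_i)^{r,s}$.

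Next, since $\F_i$ is an $\O_F \otimes_{\O_{F_0}} R$-submodule of $\Lambda_i \otimes_{\O_{F_0}} R$, both $A$ and $B$ preserve $\F_i$, and the induced endomorphisms $\bigwedge^{s+1} A$ on $\bigwedge_R^{s+1} \F_i$ and $\bigwedge^{r+1} B$ on $\bigwedge_R^{r+1} \F_i$ are the restrictions of the corresponding operators on $\bigwedge_R^{\bullet}(\Lambda_i \otimes_{\O_{F_0}} R)$.  By $(\mathrm{K}_{s+1})$, the submodule $\bigwedge_R^{s+1} \F_i$ lies inside the image of $\tensor[^{s+1}]W(\Lambda_i)^{r,s} \otimes_{\O_F} R$, on which $\bigwedge^{s+1} A$ vanishes by the previous paragraph.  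Because $\F_i$ is an $R$-direct summand of $\Lambda_i \otimes_{\O_{F_0}} R$, the exterior power $\bigwedge_R^{s+1} \F_i$ is an $R$-direct summand of $\bigwedge_R^{s+1}(\Lambda_i \otimes_{\O_{F_0}} R)$, so the inclusion is split injective and $\bigwedge^{s+1} A = 0$ already as an endomorphism of $\bigwedge_R^{s+1} \F_i$.  The identical argument using $(\mathrm{K}_{r+1})$ yields $\bigwedge^{r+1} B = 0$ on $\bigwedge_R^{r+1} \F_i$, giving the wedge condition.

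The argument is essentially formal once the eigenspace action of $A$ and $B$ is isolated; the only point requiring care is verifying that $\bigwedge^l A$ and $\bigwedge^l B$ preserve the integral lattices $\tensor[^l]W(\Lambda_i)^{r,s}$, but this is immediate from the fact that $A$ and $B$ preserve both $\Lambda_i \otimes_{\O_{F_0}} \O_F$ and the decomposition $V = V_{-\pi} \oplus V_\pi$.  I do not foresee a genuine obstacle here.
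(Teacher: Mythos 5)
Your proof is correct and follows essentially the same route as the paper: the paper also reduces to the two key vanishing facts that $\bigwedge^{s+1}(\pi\otimes 1 + 1\otimes\pi)$ annihilates $\tensor[^{s+1}]W{^{r,s}}$ and $\bigwedge^{r+1}(\pi\otimes 1 - 1\otimes\pi)$ annihilates $\tensor[^{r+1}]W{^{r,s}}$, stating them without proof. You supply the (correct) eigenspace explanation for these identities — $A$ vanishes on $V_{-\pi}$, $B$ on $V_\pi$, and every summand of the respective $W$-space must pick up at least one factor from the corresponding eigenspace — and then carefully track the passage from $F$-vector spaces to the $\O_F$-lattices to the $R$-points, which the paper compresses into "immediate consequence."
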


\begin{proof}
As in the proof of Lemma \ref{st:K_n==>Kottwitz}, this is an immediate consequence of the fact that for any $w \in \tensor[^{s+1}]W{^{r,s}}$ and $w' \in \tensor[^{r+1}]W{^{r,s}}$, we have
\[
   \Bigl[ \sideset{}{^{s+1}}\bigwedge (\pi \otimes 1 + 1 \otimes \pi)\Bigr] \cdot w = 0
    \quad\text{and}\quad
	\Bigl[ \sideset{}{^{r+1}}\bigwedge (\pi \otimes 1 - 1 \otimes \pi)\Bigr] \cdot w' = 0.\qedhere
\]
\end{proof}

\begin{rk}\label{rk:newcond==>wedge}
It is perhaps interesting to note that the flatness of $M_{\{m\}}$ in the setting of Theorem \ref{st:main_thm} follows from only conditions \eqref{it:LM1}--\eqref{it:LM4} in the definition of the naive local model and our new condition \eqref{it:new_cond}.  Indeed, the only other condition needed to make our proof go through is the wedge condition \eqref{it:wedge_cond} in the special fiber, which is used in the reduction argument given in \s\ref{ss:reduction} and in the proof of topological flatness.  
Since the elements $e_S$ occurring in the statement of Corollary \ref{st:im_basis} are all for $S$ of type $(1,n-1)$ and $(0,n)$, condition \eqref{it:new_cond} implies that the $2 \times 2$-minors of the matrix $X$ in \eqref{disp:U^Gr} must vanish, which shows that \eqref{it:new_cond} implies the wedge condition inside the open subscheme $U^\naive$ of the special fiber defined in \eqref{U's}.  Therefore \eqref{it:new_cond} implies the wedge condition on the entire special fiber, since this neighborhood contains the worst point and \eqref{it:new_cond} is invariant under polarized lattice chain automorphisms in the sense of \cite{rapzink96}*{\s3}.  One can similarly show that \eqref{it:new_cond} implies the wedge condition in the special fiber for any signature (still with $n$ odd and $I = \{m\}$) by proving the suitable analog of Corollary \ref{st:im_basis}.
\end{rk}

\subsection{Application to Shimura varieties}\label{ss:Sh}

In this subsection we give the most immediate application of Theorem \ref{st:main_thm} to Shimura varieties. Indeed, we are simply going to make explicit 
Rapoport and Zink's general definition of integral models of PEL Shimura varieties \cite[Def.~6.9]{rapzink96},
in the particular case of a unitary similitude group ramified and quasi-split at $p$ and of signature $(n-1,1)$ for odd $n$, where the level structure at $p$ is the special maximal parahoric subgroup corresponding to $I = \{m\}$ (in the sense of \cite[\s1.2.3(a)]{paprap09}); and in a way that furthermore incorporates our condition \eqref{it:new_cond}.  This is also the setting of \cite[\s1]{paprap09} (which allows any $n$, $I$, and signature), and differs only slightly from \cite[\s3]{pappas00} (which allows any $n$ and signature and takes $I = \{0\}$).  By the general formalism of local models, Richarz's smoothness result \cite[Prop.~4.16]{arzdorf09} and Theorem \ref{st:main_thm} will show that the moduli space we write down is smooth.

Let $K/\QQ$ be an imaginary quadratic field in which the prime $p \neq 2$ ramifies, and take $F_0 = \QQ_p$ and $F = K \otimes_\QQ \QQ_p$.  Let $a \mapsto \ol a$ denote the nontrivial element of $\Gal(K/\QQ)$.  Let $n$ be odd, and let $H$ be an $n$-dimensional $K/\QQ$-Hermitian space of signature $(n-1,1)$ (as in \cite[\s1.1]{paprap09}) such that the $F/\QQ_p$-Hermitian space $H \otimes_\QQ \QQ_p$ is split, i.e.~it has a basis $e_1,\dotsc,e_n$ such that \eqref{split} holds.  Fix such a basis, and define the lattice chain
\[
   \Lambda_{\{m\}} := \{\Lambda_i\}_{i \in \pm m + n\ZZ}
\]
in $H \otimes \QQ_p$ with respect to it, with $\Lambda_i$ as in \eqref{Lambda_i}. Let $G := GU(H)$.  

We now use (essentially) the notation and terminology of \cite[\s6.1--6.9]{rapzink96} for abelian schemes.  For $R$ a ring, let $AV(R)$ denote the category of abelian schemes with $\O_K$-action over $\Spec R$ up to prime-to-$p$ isogeny.  Thus an object in $AV(R)$ consists of a pair $(A,\iota)$, where $A$ is an abelian scheme over $\Spec R$ and $\iota$ is a ring homomorphism $\O_K \to \End_R(A) \otimes_\ZZ \ZZ_{(p)}$; and the morphisms $(A_1,\iota_1) \to (A_2,\iota_2)$ in $AV(R)$ consist of the elements in $\Hom_R(A_1,A_2)\otimes \ZZ_{(p)}$ commuting with the $\O_K$-actions.  The dual of an object $(A,\iota)$ in $AV(R)$ is the pair $(A^\vee,\iota^\vee)$, where $A^\vee$ is the dual abelian scheme and $\iota^\vee(a) := \iota(\ol a)^\vee$.  A \QQ-homogeneous polarization is a collection $\ol \lambda$ of quasi-isogenies $A \to A^\vee$ such that any two elements in $\ol\lambda$ differ Zariski-locally on $\Spec R$ by an element in $\QQ^\times$, and such that $\ol\lambda$ contains a polarization in $AV(R)$.  Note that, by definition of the $\O_K$-action on the dual abelian scheme, the Rosati involution attached to any polarization induces the nontrivial Galois automorphism on $K$.

Now we define the moduli space.  Let $C^p \subset G(\AA_f^p)$ be a sufficiently small open compact subgroup.  We denote by $\S_{C^p}$ the moduli problem over $\Spec \O_F$ which associates to each $\O_F$-algebra $R$ the set of isomorphism classes of quadruples $(A,\iota,\ol\lambda,\ol\eta)$ consisting of
\begin{enumerate}
\item
an object $(A,\iota)$ in $AV(R)$ such that $A$ has relative dimension $n$ over $\Spec R$;
\item\label{plzn}
a $\QQ$-homogeneous polarization $\ol\lambda$ on $(A,\iota)$ containing a polarization $\lambda$ such that $\ker\lambda \subset A[\iota(\pi)]$ of height $n-1$, where $\pi$ is a uniformizer in $\O_K \otimes_\ZZ \ZZ_{(p)}$; and
\item
a $C^p$-level structure $\ol\eta\colon H_1(A,\AA_f^p) \isoarrow H \otimes_\QQ \AA_f^p \bmod C^p$ that respects the bilinear forms on both sides up to a constant in $(\AA_f^p)^\times$ (see \citelist{\cite{rapzink96}*{\s6.8--6.9}\cite{kott92}*{\s5}} for details);
\end{enumerate}
subject to the following condition, which is the translation to this setting of \eqref{it:new_cond}.  Given such a quadruple $(A,\iota,\ol\lambda,\ol\eta)$, let $\lambda$ and $\pi$ be as in \eqref{plzn}.  Since $A$ is polarized and of relative dimension $n$, its $p$-divisible group has height $2n$.  Hence $\iota(\pi)$ is an isogeny (in $AV(R)$) of height $n$, since $p$ ramifies in $K$.  Hence there exists a unique isogeny $\rho\colon A^\vee \to A$ (which has height $1$) such that the composite
\[
   A \xra\lambda A^\vee \xra\rho A
\]
is $\iota(\pi)$.  Extending this diagram periodically, we obtain an $\L$-set of abelian varieties, in the terminology of \cite[Def.~6.5]{rapzink96}, for $\L$ the $\O_F$-lattice chain $\Lambda_{\{m\}}$.  Writing $f$ for the structure morphism $f\colon A \to \Spec R$, define $M(A) := (R^1f_*(\Omega^\bullet_{A/R}))^\vee$ to be the $R$-linear dual of the first de Rham cohomology module of $A$.  Likewise define $M(A^\vee)$.  Then $M(A)$ and $M(A^\vee)$ are finite locally free $R$-modules of rank $2n$, and the isogenies $\lambda$ and $\rho$ induce a diagram of $\O_F \otimes_{\ZZ_p} R$-modules
\[
   M(A) \xra{\lambda_*} M(A^\vee) \xra{\rho_*} M(A),
\]
which extends periodically to a chain of $\O_F \otimes_{\ZZ_p} R$-modules of type $(\Lambda_{\{m\}})$, in the terminology of \cite[Def.~3.6]{rapzink96}.  The polarization $\lambda$ makes this into a polarized chain of $\O_F \otimes_{\ZZ_p} R$-modules in a natural way.  By \cite[Th.~2.2]{pappas00}, \'etale-locally on $\Spec R$ there exists an isomorphism of polarized chains between this chain and the chain $\Lambda_{\{m\}} \otimes_{\ZZ_p} R$ which respects the forms on both sides up to a scalar in $R^\times$; here the polarization on $\Lambda_{\{m\}} \otimes_{\ZZ_p} R$ is induced from \eqref{pairing}.  In particular, this chain isomorphism gives an isomorphism of modules
\begin{equation}\label{isom}
   M(A) \isom \Lambda_{-m} \otimes_{\ZZ_p} R,
\end{equation}
and the condition we finally impose on $(A,\iota,\ol\lambda,\ol\eta)$ is that
\begin{enumerate}
\renewcommand{\theenumi}{$*$}
\item\label{cond}
upon identifying the $\Fil^1$ term in the covariant Hodge filtration
\[
   0 \to \Fil^1 \to M(A) \to \Lie A \to 0
\]
with a submodule $\F \subset \Lambda_{-m} \otimes_{\ZZ_p} R$ via \eqref{isom},
the line
\[
   \sideset{}{_R^n}\bigwedge \F \subset W(\Lambda_{-m}) \otimes_{\ZZ_p} R
\]
is contained in $L_{-m}^{n-1,1}(R)$.
\end{enumerate}

Of course the notation here is as in \s\ref{ss:new_cond}.  It is not hard to see that \eqref{cond} is independent of the polarized chain isomorphism used above,
as well as the choice of $\lambda$ and $\pi$, so that it is a well-defined condition.  By the formalism of the local model diagram (see e.g.~\cite[Th.~2.2]{pappas00}), since the scheme $M_{\{m\}}$ is a closed subscheme of $M_{\{m\}}^\naive$ (and also using Remark \ref{rk:newcond==>wedge}), $\S_{C^p}$ is a closed subscheme of the scheme denoted $\A_{C^p}$ in \cite[Def.~6.9]{rapzink96} and \cite[\s2]{pappas00}; in particular, $\S_{C^p}$ is representable by a quasi-projective scheme over $\Spec \O_F$.  Since furthermore $M_{\{m\}}$ and $M_{\{m\}}^\naive$ have the same generic fiber, the same is true of $\S_{C^p}$ and $\A_{C^p}$.  Hence the generic fiber of $\S_{C^p}$ is the base change to $\Spec F$ of a Shimura variety attached to $G$ and the Hermitian data above \cite[(1.19)]{paprap09}.  By Richarz's smoothness result \cite[Prop.~4.16]{arzdorf09} and Theorem \ref{st:main_thm}, we obtain the following.

\begin{cor}
$\S_{C^p}$ is smooth over $\Spec \O_E$.\qed
\end{cor}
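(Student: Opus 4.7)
The plan is to deduce smoothness from the local model diagram formalism of Rapoport--Zink combined with Richarz's smoothness result and Theorem \ref{st:main_thm}. The key point is that the condition \eqref{cond} imposed in the definition of $\S_{C^p}$ is precisely the translation, via the Hodge filtration, of the new condition \eqref{it:new_cond} defining $M_{\{m\}}$, so that the local model diagram for $\A_{C^p}$ restricts to a local model diagram for the closed subscheme $\S_{C^p}$.

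More concretely, the first step is to form the torsor $\widetilde{\S}_{C^p} \to \S_{C^p}$ that parametrizes, in addition to the data $(A,\iota,\ol\lambda,\ol\eta)$, a trivialization of the polarized chain $(M(A_i))_i$ as a polarized chain of the standard type $\Lambda_{\{m\}} \otimes_{\ZZ_p} R$. This torsor is under the smooth (pro-)algebraic group of polarized chain automorphisms, as in \cite[Th.~3.11, Prop.~6.9]{rapzink96}. Sending a trivialization to the resulting Hodge filtration $\F \subset \Lambda_{-m} \otimes_{\ZZ_p} R$ defines a morphism
\[
   \varphi\colon \widetilde{\S}_{C^p} \longrightarrow M_{\{m\}}^\naive,
\]
and by the very definition of condition \eqref{cond}, $\varphi$ factors through the closed subscheme $M_{\{m\}} \subset M_{\{m\}}^\naive$. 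The Grothendieck--Messing infinitesimal deformation argument of Rapoport--Zink \cite[\s3.30--3.33]{rapzink96} shows that $\varphi$ is formally smooth (and smooth of the expected relative dimension).

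Given the local model diagram $\S_{C^p} \xleftarrow{\pi} \widetilde{\S}_{C^p} \xrightarrow{\varphi} M_{\{m\}}$ with both $\pi$ and $\varphi$ smooth and surjective, smoothness of $\S_{C^p}$ over $\Spec \O_F$ is equivalent to smoothness of $M_{\{m\}}$ over $\Spec \O_F$. By Theorem \ref{st:main_thm}, $M_{\{m\}} = M_{\{m\}}^\loc$, and by Richarz \cite{arzdorf09}*{Prop.~4.16}, $M_{\{m\}}^\loc$ is smooth over $\Spec \O_F$ in the signature $(n-1,1)$ case with $n$ odd. Since $\O_E = \O_F$ in our setting ($r \neq s$), this gives the corollary.

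The main obstacle is not conceptual but bookkeeping: one must check that \eqref{cond} is stable under the $\widetilde{\S}_{C^p} \to \S_{C^p}$ torsor (which follows from the independence verification already carried out in the text before the corollary) and that the refined local model $M_{\{m\}}$, being cut out from $M_{\{m\}}^\naive$ by the \emph{same} linear-algebraic condition applied to the Hodge filtration, enjoys the same local-model-diagram property as $M_{\{m\}}^\naive$. Once this compatibility is in place, the smoothness assertion is purely formal, since smoothness is étale-local on the source and descends along smooth surjective morphisms.
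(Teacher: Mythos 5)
Your proof is correct and takes essentially the same route as the paper: the paper also deduces smoothness from the local model diagram formalism (citing \cite{pappas00}*{Th.\ 2.2}) applied to the closed subscheme $\S_{C^p} \subset \A_{C^p}$, together with Theorem \ref{st:main_thm} and Richarz's smoothness result \cite{arzdorf09}*{Prop.\ 4.16}. You have simply spelled out the torsor $\widetilde{\S}_{C^p}$ and the Grothendieck--Messing argument that the paper leaves implicit by ending the statement with $\qed$.
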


Theorem \ref{st:main_thm} can also be applied to certain Shimura varieties for unitary groups attached to CM fields at primes which do not ramify in the totally real subfield, but do in the CM extension.  However we will leave the details for another occasion.

\subsection{The general PEL setting}\label{ss:PEL_setting}

We conclude the paper by explaining how to formulate the condition \eqref{it:K_l} introduced in \s\ref{ss:wedge_power_analogs} in the general PEL setting of Rapoport and Zink's book \cite{rapzink96}.  Given a $\QQ_p$-vector space $V$ and a $\QQ_p$-algebra $R$, we write $V_R := V \otimes_{\QQ_p} R$.

Let $\bigl(F,B,V,\aform,*,\O_B,\{\mu\},\L\bigr)$ be a (local) PEL datum as in \cite{rapzink96}*{\s1.38, Def.\ 3.18}; see also \cite{paprap05}*{\s14}.  This means that
\begin{itemize}
\item
$F$ is a finite product of finite field extensions of $\QQ_p$;
\item
$B$ is a finite semisimple $\QQ_p$-algebra with center $F$;
\item
$V$ is a finite-dimensional left $B$-module;
\item
\aform is a nondegenerate alternating $\QQ_p$-bilinear form $V \times V \to \QQ_p$;
\item
$b \mapsto b^*$ in an involution on $B$ satisfying $\langle bv,w\rangle = \langle v,b^*w\rangle$ for all $v,w \in V$;
\item
$\O_B$ is a $*$-invariant maximal order of $B$;
\item
$\{\mu\}$ is a geometric conjugacy class of cocharacters of the algebraic group $G$ over $\Spec \QQ_p$ whose $R$-points, for any commutative $\QQ_p$-algebra $R$, are
\[
   G(R) = \biggl\{\, g \in GL_B(V_R) \biggm| 
	\begin{varwidth}{\linewidth}
		\centering
		there exists $c(g) \in R^\times$ such that\\
		$\langle gv,gw\rangle = c(g)\langle v,w\rangle$ for all $v,w \in V_R$
	\end{varwidth} 
	\,\biggr\};
\]
and
\item
\L is a self-dual multichain of $\O_B$-lattices in $V$ \cite{rapzink96}*{Defs.\ 3.4, 3.13}.
\end{itemize}

The conjugacy class $\{\mu\}$ is required to satisfy the conditions that for one, hence any, representative $\mu$ defined over one, hence any, extension $K$ of $\QQ_p$, the weights of $\GG_{m,K}$ acting on $V_K$ via $\mu$ are $0$ and $1$, and the composite $c \circ \mu$ ($c$ the similitude character of $G$) is $\id_{\GG_{m,K}}$.  The (local) reflex field $E$ is the field of definition of the conjugacy class $\{\mu\}$, which may also be described as
\[
   \QQ_p\bigl(\,\tr_{\ol \QQ_p} (b \mid V_1) \bigm| b \in B\,\bigr),
\]
where $\ol\QQ_p$ is an algebraic closure of $\QQ_p$ and
\[
   V_1 \subset V_{\ol\QQ_p}
\]
is the weight $1$ subspace of a representative $\mu \in \{\mu\}$.

Rapoport and Zink attach to the above datum the ``naive'' local model in \cite{rapzink96}*{Def.\ 3.27}.  The following is an obvious variant.

\begin{defn} We denote by $M^\naive$ the scheme over $\Spec \O_E$ representing the functor whose values in an $\O_E$-algebra $R$ consist of all pairs of
\begin{itemize}
\item
a functor $\Lambda \mapsto \F_\Lambda$ from \L (regarded as a category in which the morphisms are inclusions of lattices in $V$) to the category of $\O_B \otimes_{\ZZ_p} R$-modules; and
\item
a natural transformation of functors $j_\Lambda\colon \F_\Lambda \to \Lambda \otimes_{\ZZ_p} R$,
\end{itemize}
such that
\begin{enumerate}
\item for all $\Lambda\in\L$, $\F_\Lambda$ is a submodule of $\Lambda \otimes_{\ZZ_p} R$ which is an $R$-direct summand, and $j_\Lambda$ is the natural inclusion $\F_\Lambda \subset \Lambda \otimes_{\ZZ_p} R$;
\item for all $\Lambda \in \L$ and all $b \in B$ which normalize $\O_B$, the composite
\[
   \F_\Lambda^b \subset (\Lambda \otimes_{\ZZ_p} R)^b \xra[\undertilde]b b\Lambda \otimes_{\ZZ_p} R
\]
identifies $(\F_\Lambda)^b$ with $\F_{b\Lambda}$; here for any $\O_B$-module $N$, we denote by $N^b$ the $\O_B$-module whose underlying abelian group is $N$ and whose $\O_B$-action is given by $x \cdot n = b^{-1}x b n$, for $x \in \O_B$ and $n\in N$;
\item for all $\Lambda \in \L$, under the perfect pairing $(\Lambda \otimes_{\ZZ_p} R) \times \bigl(\wh \Lambda \otimes_{\ZZ_p} R \bigr) \to R$ induced by \aform, where $\wh\Lambda$ denotes the dual lattice of $\Lambda$, the submodules $\F_\Lambda$ and $\F_{\wh\Lambda}$ pair to $0$; and
\item (Kottwitz condition) for all $\Lambda \in \L$, there is an equality of polynomial functions over $R$
\[
   \det(b\mid \F_\Lambda) = \det(b \mid V_1), \quad b \in \O_B
\]
(see \cite{rapzink96}*{\s3.23(a)} for the precise meaning of this), where $V_1 \subset V_{\ol \QQ_p}$ is as above.
\end{enumerate}
\end{defn}

Since $V$ admits a nondegenerate symplectic form, its $\QQ_p$-dimension is even, say equal to $2n$.  Choose any $\mu \in \{\mu\}$, and let
\[
   V_{\ol\QQ_p} = V_1 \oplus V_0
\]
be the corresponding weight decomposition.  The condition that $c \circ \mu$ is the identity forces both $V_1$ and $V_0$ to be totally isotropic for the form induced by \aform, and therefore both have dimension $n$.  Fix an integer $l$ with $1 \leq l \leq n$.

For $b \in \O_B$, let $\chi_b(T)$ denote the characteristic polynomial of $b$ acting $\ol\QQ_p$-linearly on $V_1$.  Then $\chi_b(T)$ has coefficients in $\O_E$.  Let $\alpha_1,\dotsc,\alpha_d \in \ol\QQ_p$ denote its distinct roots, and write
\[
   \chi_b(T) = \prod_{i = 1}^d (T - \alpha_i)^{m_i} \in \ol\QQ_p[T].
\]
Thus $m_1 + \dotsb + m_d = n$.  Let $V_{\ol\QQ_p,\alpha_i}$ denote the generalized $\alpha_i$-eigenspace for $b$ acting on $V_{\ol\QQ_p}$.  Define
\[
   \tensor[^l]W{_{b,\ol\QQ_p}} := \bigoplus_{\substack{j_1 + \dotsb + j_d = l\\ 
                                             j_1 \leq m_1\\
											 \vdots\\
											 j_d \leq m_d}}
      \biggl(\sideset{}{_{\ol\QQ_p}^{j_1}} \bigwedge V_{\ol\QQ_p,\alpha_1} \otimes \dotsb \otimes \sideset{}{_{\ol\QQ_p}^{j_d}}\bigwedge V_{\ol\QQ_p,\alpha_d}\biggr).
\]
Then $\tensor[^l]W{_{b,\ol\QQ_p}}$ is naturally a subspace of $\bigwedge_{\ol\QQ_p}^l\! V_{\ol\QQ_p}$, and it is defined over $E$ for the same reason that $\chi_b(T)$ is.  Let $\tensor[^l]W{_b}$ denote its descent to a subspace of $\bigwedge_E^l V_E$.  Define
\[
   \tensor[^l]W{} := \bigcap_{b \in \O_B} \tensor[^l]W{_b} \subset \sideset{}{_E^l}\bigwedge V_E.
\]

We can now give our formulation of condition \eqref{it:K_l} in the present setting, which we continue to denote by \eqref{it:K_l_PEL}.  For $\Lambda \in \L$, let
\[
   \tensor[^l]W{}(\Lambda) := \tensor[^l]W{} \cap \sideset{}{_{\O_E}^l} \bigwedge (\Lambda \otimes_{\ZZ_p} \O_E)
\]
(intersection in $\bigwedge_E^l V_E$).
For $R$ an $\O_E$-algebra and $(\F_\Lambda \subset \Lambda \otimes_{\ZZ_p} R)_{\Lambda \in \L}$ an $R$-point on $M^\naive$, the condition is that
\begin{enumerate}
\renewcommand{\theenumi}{K$_l$}
\item\label{it:K_l_PEL}
for all $\Lambda$, the subbundle $\bigwedge_\Lambda^l \F_\Lambda \subset \bigwedge_R^l (\Lambda \otimes_{\ZZ_p} R)$ is contained in
\[
   \im \Bigl[ \tensor[^l]W{}(\Lambda) \otimes_{\O_E} R \to \sideset{}{_R^l}\bigwedge (\Lambda \otimes_{\ZZ_p} R) \Bigr].
\]
\end{enumerate}

As in \s\ref{ss:wedge_power_analogs}, conditions \eqref{it:K_l_PEL} for $l = 1,\dotsc$, $n$ are closed conditions on $M^\naive$.  On the generic fiber, the Kottwitz condition implies all of them, and is equivalent to (K$_n$).  In general, the analog of Lemma \ref{st:K_n==>Kottwitz} also holds.

\begin{lem}\label{st:whatevs}
Condition $(\mathrm{K}_n)$ implies the Kottwitz condition on $M^\naive$.
\end{lem}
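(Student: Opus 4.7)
The plan is to adapt the argument of Lemma \ref{st:K_n==>Kottwitz}. The Kottwitz condition is an equality of polynomial functions in $b \in \O_B$; equivalently, one wants the polynomial identity $\det(T\cdot \id - b \mid \F_\Lambda) = \chi_b(T)$ in $R[T]$, functorially in $b$ and $R$. Since both sides are polynomial in the coordinates of $b$, the polynomial-function formalism of Kottwitz (cf.\ \cite{rapzink96}*{\S 3.23(a)}) lets us verify the identity by checking it pointwise after base changing $R$ to algebraically closed field extensions. So we may assume $R$ is an algebraically closed field containing $E$, and $b \in \O_B \otimes R$ is arbitrary.

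Under this reduction, decompose $V \otimes_{\QQ_p} R = \bigoplus_\gamma V_{R,\gamma}$ into the generalized eigenspaces of $b$. Because $\F_\Lambda$ is an $\O_B \otimes R$-submodule of $\Lambda \otimes_{\ZZ_p} R$ (part of condition (1) of $M^\naive$), it is $b$-stable, and hence inherits the decomposition: $\F_\Lambda = \bigoplus_\gamma (\F_\Lambda \cap V_{R,\gamma})$. Setting $k_\gamma := \dim_R(\F_\Lambda \cap V_{R,\gamma})$, the induced grading on $\sideset{}{^n}{\bigwedge}(V\otimes R)$ gives
\[
   \sideset{}{^n}{\bigwedge} \F_\Lambda = \bigotimes_\gamma \sideset{}{^{k_\gamma}}{\bigwedge} (\F_\Lambda \cap V_{R,\gamma}).
\]
On the other hand, condition $(\mathrm{K}_n)$ forces $\sideset{}{^n}{\bigwedge}\F_\Lambda$ into the image of $\tensor[^n]W{_b}(\Lambda) \otimes_{\O_E} R$, which over $R$ is
\[
   \sideset{}{^{m_1}}{\bigwedge} V_{R,\alpha_1} \otimes \dotsb \otimes \sideset{}{^{m_d}}{\bigwedge} V_{R,\alpha_d},
\]
since $\sum j_i = n = \sum m_i$ together with $j_i \leq m_i$ forces $j_i = m_i$, and no eigenvalue $\gamma \notin \{\alpha_1,\dotsc,\alpha_d\}$ enters. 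Comparing under the $V_{R,\gamma}$-grading, we conclude $k_\gamma = 0$ for $\gamma \notin \{\alpha_i\}$ and $k_{\alpha_i} = m_i$ for each $i$.

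With the shape of $\F_\Lambda$ now pinned down, the proof concludes quickly. On $\F_\Lambda \cap V_{R,\alpha_i}$ (of dimension $m_i$), $b$ acts as $\alpha_i \cdot \id$ plus a nilpotent, so $\det(T\cdot \id - b \mid \F_\Lambda \cap V_{R,\alpha_i}) = (T - \alpha_i)^{m_i}$. Multiplying over $i$ gives $\det(T\cdot \id - b \mid \F_\Lambda) = \prod_i (T - \alpha_i)^{m_i} = \chi_b(T)$, which is the Kottwitz condition. The main technical obstacle is the initial reduction to algebraically closed $R$, i.e.\ making the polynomial-function formalism into a usable pointwise statement; once this is done, the eigenspace argument handles semisimple and non-semisimple $b$ uniformly, since the $b$-stability of $\F_\Lambda$ combined with $(\mathrm{K}_n)$ pins down the correct eigenvalue multiplicities regardless of whether $b$ is diagonalizable.
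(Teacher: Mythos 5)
The proposal has a genuine gap in its first reduction step, and that step is the heart of the matter. You claim that the polynomial-function formalism "lets us verify the identity by checking it pointwise after base changing $R$ to algebraically closed field extensions." This is false: the Kottwitz condition is an equality of elements of $R$ (the coefficients of two polynomials), and equalities in $R$ are not detected by maps $R \to k$ with $k$ an algebraically closed field. What one actually gets from checking over all such $k$ is that the difference of coefficients lies in the nilradical of $R$, which is strictly weaker than vanishing unless $R$ is reduced. In the present context this weakening is fatal, since the entire subtlety of local models is the nilpotent structure of the special fiber — distinguishing $M^\loc$ from $M^\naive$ is precisely about conditions that hold topologically but not scheme-theoretically. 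An argument that only controls what happens over fields cannot show that $(\mathrm{K}_n)$ implies the Kottwitz condition as a closed condition.

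The paper's proof avoids any such reduction. It first reduces the Kottwitz condition to checking, for some $\ZZ_p$-basis $b_1,\dotsc,b_r$ of $\O_B$ consisting of elements acting semisimply on $V$, that $\det(T - b_i \mid \F_\Lambda) = \chi_{b_i}(T)$ in $R[T]$, and then invokes the argument of Lemma~\ref{st:K_n==>Kottwitz}: the operator identity $\bigl[\bigwedge^n(T - b_i)\bigr]\cdot w = \chi_{b_i}(T)\,w$ holds for $w \in \tensor[^n]W{_{b_i}}$ already over $E$, hence persists under $-\otimes_{\O_E} R$, and therefore applies to $\bigwedge^n\F_\Lambda$ by $(\mathrm{K}_n)$. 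Because this is an identity of $\O_E$-linear operators on a lattice, it survives arbitrary base change, nilpotents and all. Your eigenspace computation (pinning down the multiplicities $k_\gamma = m_i$) is essentially a geometric-fiber shadow of this identity, but it cannot substitute for it. If you want to salvage your approach, you would need to replace the reduction to fields with an argument over arbitrary $R$, at which point you are effectively forced back to the formal polynomial identity the paper uses.
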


\begin{proof}
Let $R$ be an $\O_E$-algebra and $(\F_\Lambda)_\Lambda$ an $R$-point on $M^\naive$.  We use that the Kottwitz condition holds if and only if for some $\ZZ_p$-basis $b_1,\dotsc,b_r$ of $\O_B$, for all $i$ and all $\Lambda$, the characteristic polynomial of $b_i$ acting $R$-linearly on $\F_\Lambda$ equals $\chi_{b_i}(T)$ in $R[T]$.  For this we may assume that $b_1,\dotsc,b_r$ all act semisimply on $V$, and then the argument is the same as in the proof of Lemma \ref{st:K_n==>Kottwitz}.
\end{proof}

In the special case that $F = B$ is a ramified quadratic extension of $F_0 = \QQ_p$, $V = F^n$, $*$ is the nontrivial element of $\Gal(F/\QQ_p)$, and $\aform$ is the alternating form defined in \s\ref{ss:setup}, condition \eqref{it:K_l_PEL} as defined here reduces to the version in \s\ref{ss:wedge_power_analogs}.  More precisely, continuing to use the notation of \s\ref{ss:setup}, $G$ is the group $GU(\phi)$, which splits over $F$ via the map
\[
   G_F \xra[\undertilde]{(\varphi,c)} GL_n \times \GG,
\]
where as above $c$ is the similitude character, and $\varphi\colon G_F \to GL_n$ is the map given on matrix entries (here regarding $G$ as a subgroup of $\Res_{F/\QQ_p} GL_n$) by
\[
   \xymatrix@R=0ex{
      F \otimes_{\QQ_p} R \ar[r]  &  R\\
	  x \otimes y \ar@{|->}[r]  &  xy
   }
\]
for an $F$-algebra $R$.  Take $\{\mu\}$ to be the geometric conjugacy class of the cocharacter
\[
   t \mapsto \bigl(\diag(\underbrace{t,\dotsc,t}_s,\underbrace{1,\dotsc,1}_r), t\bigr)
\]
of $GL_n \times \GG_m$.  Let \L be the chain of $\O_F$-lattices $\Lambda_i$ for $i \in \pm I + n\ZZ$.  Then the naive local model attached to this PEL datum is the one defined in \s\ref{ss:naiveLM}, and condition \eqref{it:K_l_PEL} is the one defined in \s\ref{ss:wedge_power_analogs}, since the subspace $\tensor[^l]W{^{r,s}}$ defined there equals $\tensor[^l]W{}$, as one readily checks.

\begin{bibdiv}
\begin{biblist}
   

\bib{arzdorf09}{article}{
  author={Arzdorf, Kai},
  title={On local models with special parahoric level structure},
  journal={Michgan Math. J.},
  volume={58},
  date={2009},
  number={3},
  pages={683\ndash 710},
}

\bib{goertz01}{article}{
  author={G{\"o}rtz, Ulrich},
  title={On the flatness of models of certain Shimura varieties of PEL-type},
  journal={Math. Ann.},
  volume={321},
  date={2001},
  number={3},
  pages={689--727},
  issn={0025-5831},
}

\bib{goertz03}{article}{
  author={G{\"o}rtz, Ulrich},
  title={On the flatness of local models for the symplectic group},
  journal={Adv. Math.},
  volume={176},
  date={2003},
  number={1},
  pages={89--115},
}

\bib{kisinpappas?}{article}{
  author={Kisin, M.},
  author={Pappas, G.},
  title={Integrals models of Shimura varieties with parahoric level structure},
  status={in preparation},
}

\bib{kott92}{article}{
   author={Kottwitz, Robert E.},
   title={Points on some Shimura varieties over finite fields},
   journal={J. Amer. Math. Soc.},
   volume={5},
   date={1992},
   number={2},
   pages={373--444},
}

\bib{pappas00}{article}{
  author={Pappas, Georgios},
  title={On the arithmetic moduli schemes of PEL Shimura varieties},
  journal={J. Algebraic Geom.},
  volume={9},
  date={2000},
  number={3},
  pages={577--605},
  issn={1056-3911},
}

\bib{paprap05}{article}{
  author={Pappas, G.},
  author={Rapoport, M.},
  title={Local models in the ramified case. II. Splitting models},
  journal={Duke Math. J.},
  volume={127},
  date={2005},
  number={2},
  pages={193--250},
  issn={0012-7094},
}

\bib{paprap09}{article}{
  author={Pappas, G.},
  author={Rapoport, M.},
  title={Local models in the ramified case. III. Unitary groups},
  journal={J. Inst. Math. Jussieu},
  date={2009},
  volume={8},
  number={3},
  pages={507--564},
}

\bib{prs13}{article}{
  author={Pappas, G.},
  author={Rapoport, M.},
  author={Smithling, B.},
  title={Local models of Shimura varieties, I. Geometry and combinatorics},
  book={ title={Handbook of Moduli, Volume III}, editor={Farkas, G.}, editor={Morrison, I.}, series={Advanced Lectures in Mathematics}, volume={26}, publisher={International Press}, address={Somerville, MA, USA}, },
  date={2013},
  pages={135--217},
}

\bib{pappaszhu13}{article}{
  author={Pappas, G.},
  author={Zhu, X.},
  title={Local models of Shimura varieties and a conjecture of Kottwitz},
  journal={Invent. Math.},
  volume={194},
  number={1},
  pages={147--254},
  date={2013},
}

\bib{rapoportsmithlingzhang?}{article}{
  author={Rapoport, M.},
  author={Smithling, B.},
  author={Zhang, W.},
  title={On arithmetic transfer: conjectures},
  status={in preparation},
}

\bib{rapzink96}{book}{
  author={Rapoport, M.},
  author={Zink, Th.},
  title={Period spaces for $p$-divisible groups},
  series={Annals of Mathematics Studies},
  volume={141},
  publisher={Princeton University Press},
  place={Princeton, NJ, USA},
  date={1996},
  pages={xxii+324},
  isbn={0-691-02782-X},
  isbn={0-691-02781-1},
}

\bib{sm11b}{article}{
  author={Smithling, Brian},
  title={Topological flatness of orthogonal local models in the split, even case. I},
  journal={Math. Ann.},
  volume={350},
  date={2011},
  number={2},
  pages={381--416},
}

\bib{sm11d}{article}{
  author={Smithling, Brian},
  title={Topological flatness of local models for ramified unitary groups. I. The odd dimensional case},
  journal={Adv. Math.},
  volume={226},
  number={4},
  date={2011},
  pages={3160--3190},
}

\bib{sm14}{article}{
  author={Smithling, Brian},
  title={Topological flatness of local models for ramified unitary groups. II. The even dimensional case},
  journal={ J. Inst. Math. Jussieu},
  volume={13},
  number={2},
  date={2014},
  pages={303--393},
}

\bib{sm-decon}{article}{
  author={Smithling, Brian},
  title={Orthogonal analogs of some schemes considered by De Concini},
  status={in preparation},
}

\end{biblist}
\end{bibdiv}
\end{document}